\numberwithin{equation}{section}
\numberwithin{figure}{section}
\theoremstyle{plain}
\newtheorem{thm}{Theorem}
 \theoremstyle{definition}
  \newtheorem{example}[thm]{Example}
\numberwithin{equation}{section} 
\numberwithin{figure}{section} 
\theoremstyle{plain}
 \theoremstyle{definition}
 \newtheorem{defn}[thm]{Definition}
 \newtheorem{prop}[thm]{Proposition}
 \newtheorem{rem}[thm]{Remark}
 \newtheorem{lemma}[thm]{Lemma}
\begin{document}

\title{Paths on graphs and associated quantum groupoids}

\author{R. Trinchero}

\address{Centro Atómico Bariloche e Instituto Balseiro, Bariloche, Argentina.}

\email{trincher@cab.cnea.gov.ar}

\thanks{CONICET support is gratefully acknowledged. To appear in the proceedings
of \emph{Colloquium on Hopf Algebras, Quantum Groups and Tensor Categories},
August 31st to September 4th, 2009.}

\date{17 March 2010}
\begin{abstract}
Given any simple biorientable graph it is shown that there exists
a weak {*}-Hopf algebra constructed on the vector space of graded
endomorphisms of essential paths on the graph. This construction is
based on a direct sum decomposition of the space of paths into orthogonal
subspaces one of which is the space of essential paths. Two simple
examples are worked out with certain detail, the ADE graph $A_{3}$
and the affine graph $A_{[2]}$. For the first example the weak {*}-Hopf
algebra coincides with the so called double triangle algebra. No use
is made of Ocneanu's cell calculus. 
\end{abstract}
\maketitle

\section{Introduction}

One of the most interesting developments in mathematical physics of
the last decades has been the classification of SU(2)-type rational
conformal field theories by ADE graphs%
\footnote{Analog classifications exist for SU(3)-type\cite{Difzu} and SU(4)-type\cite{Ocbar}
rational conformal field theories, however the construction of the
corresponding weak Hopf algebras out of the analog of the ADE graphs
is not known.%
}\cite{capp1}. In relation to the present work a possible way to look
at this classification is the following%
\footnote{Another way closer to the historical path is given in \cite{Isaschi}%
}. The tensor category of representations of a weak {*}-Hopf algebra\cite{BS}
constructed out of the corresponding ADE graph $G$ is summarized
by another graph $Oc(G)$, called the Ocneanu graph of quantum symmetries\cite{Ocgoto}.
Knowledge of this last graph encodes information on the conformal
field theory when considered in various environments, the corresponding
generalized partition functions can be obtained from this graph\cite{capp1,CoqGil,CoqHuer}.
In addition the weak {*}-Hopf algebras mentioned above can be given
a physical interpretation as the algebras of quantum mechanical symmetries
of certain quantum statistical models, known as face models\cite{Trfdta}.

For the case of ADE graphs the weak {*}-Hopf algebra mentioned above
is known as the the double triangle algebra(DTA)\cite{Ocgoto,PetZub,CoqTr}.
The construction of this algebra out of the corresponding ADE graph
starts from something called quantum 6-j symbols\cite{coq6j} that
can be computed employing Ocneanu's cell calculus. These objects describe
the representation theory of the DTA\cite{Ocparag,Evkawa,Roche}.
No direct derivation of this weak Hopf algebra out of paths on the
corresponding graph is available in the literature. One of the aims
of this work is to fill this gap%
\footnote{The question of whether such a derivation exists or not was posed
by Oleg Ogievetsky in relation to joint work with the author.%
}.

The key ingredient in this derivation is a direct sum decomposition
of the space of paths of a given length into orthogonal subspaces,
one of which is the space of essential paths. The space of essential
paths can be defined in terms of a representation of the Temperley-Lieb-Jones
algebra in the space of paths over the graph. The other terms in the
above mentioned decomposition are obtained by means of the application
of Ocneanu creation operators to spaces of essential paths of a given
length. The product in the resulting weak Hopf algebra is defined
using a projection of the concatenation factor by factor of endomorphism
of paths. This projection sends graded endomorphism of paths into
graded endomorphism of essential paths.

The derivation mentioned above can be done for any simple bioriented
graph. This provides a generalization of the construction to simple
bioriented graphs that are not ADE. In that cases the resulting weak
{*}-Hopf algebra is infinite dimensional. For illustrative purposes
a pair of simple examples are considered in this work. One of which
is ADE and the other not.

Some interesting further research arise in relation to this work.
The representation theory of these weak {*}-Hopf algebras has not
been considered in this work. The detailed study of all the affine
graphs($\beta=2)$ weak {*}-Hopf algebras remains to be done. Also
the case of non-affine non-ADE graphs($\beta>2$) is missing. Furthermore
the relation of these weak {*}-Hopf algebras with conformal field
theory deserves to be considered. 

This paper is organized as follows. Sections 2,3 and 4 set up the
scenario and give the basic definitions. Section 5 presents the decomposition
and section 6 the projection mentioned above. Sections 7,8 and 9 deal
with the weak Hopf algebra structure.

\section{Paths\label{sec:Paths}}

Let $G$ denote a simple biorientable graph. Just to remind the reader
some basic definitions to be employed in what follows are included.%
\footnote{Further definitions and basic results on graph theory can be found
in any textbook on graph theory, a short account of these matters
related to this work are presented in appendix A of ref. \cite{Trfdta}%
}.\begin{defn} \emph{Adjacency matrix. }Let the graph\emph{ G }have
$N_{v}$ vertices, its\emph{ }adjacency matrix $M$ is the $N_{v}\times N_{v}$
matrix whose $v_{i}v_{j}$ entry is $1$ if the vertex $v_{i}$ is
connected to the vertex $v_{j}$ by an edge belonging to $G$, $0$
if it is not connected.\end{defn}

\begin{defn} \emph{Elementary path}, \emph{length}. A elementary
path is a succession of consecutive vertices in G. The number of these
vertices -1 is called the length of the path. \end{defn}

\begin{defn} \emph{\label{-Space-of paths}Space of paths $\mathcal{P}$}.
The inner product vector space of paths $\mathcal{P}$ is defined
by saying that elementary paths provide a orthonormal basis of this
space. \end{defn}

\begin{defn}\emph{Concatenation product in }$\mathcal{P}$. Given
two elementary path $\eta=(v_{0},v_{1},\cdots,v_{n})$ and $\eta'=(v_{0}',v_{1}',\cdots,v_{n}')$
their concatenation product $\eta\star\eta'$ is given by,\[
\eta\star\eta'=\delta_{v_{n}v_{0'}}(v_{0},v_{1},\cdots,v_{n},v_{1}',\cdots,v_{n}')\]
\end{defn}

\section{Creation and annihilation operators on $\mathcal{P}$}

Let $\eta=(v_{0},v_{1},\cdots,v_{n})$ denote a elementary path of
length $n$. \begin{defn} \emph{\label{def:Creation-and-annhilation}Creation
and annihilation operators $c_{i}^{\dagger}:\mathcal{P}_{n}\to\mathcal{P}_{n+2}$
and $c_{i}:\mathcal{P}_{n}\to\mathcal{P}_{n-2}$ \begin{eqnarray*}
c_{i}\,\eta & = & c_{i}\,(v_{0},v_{1},\cdots,v_{i},v_{i+1},v_{i+2},v_{i+3},\cdots,v_{n})\\
 & = & \delta_{v_{i}v_{i+2}}\sqrt{\frac{\mu_{v_{i+1}}}{\mu_{v_{i}}}}\,(v_{o},v_{1},\cdots,v_{i},v_{i+3},\cdots,v_{n})\;\; if\;0\leq i\leq n-2,\;\;0\; otherwise\end{eqnarray*}
} 

\begin{eqnarray}
c_{i}^{\dagger}\eta & = & c_{i}^{\dagger}(v_{0},v_{1},\cdots,v_{i},v_{i+1},\cdots,v_{n})\nonumber \\
 & = & \sum_{v\, n.n.v_{i}}\sqrt{\frac{\mu_{v}}{\mu_{v_{i}}}}\,(v_{0},v_{1},\cdots,v_{i},v,v_{i},v_{i+1},\cdots,v_{n})\;\; if\;0\leq i\leq n,\;\;0\; otherwise\label{eq:crea}\end{eqnarray}
where $\mathcal{P}_{n}$ is the inner product vector space of paths
of length $n$, $\mu_{v}$ denotes the components of the Perron-Frobenius
eigenvector%
\footnote{i.e., the eigenvector of the adjacency matrix $M$ with greatest eigenvalue
$\beta$ and with its smallest components taken to be $1$.%
} and $n.n.$ denotes nearest neighbours in $G$.\end{defn}

\begin{prop} \label{pro:For-,4}For $i\leq n$,\[
c_{i}c_{i}^{\dagger}=\beta1_{n}\]
where $\beta$ stands for the highest eigenvalue of the adjacency
matrix of G and $1_{n}$ denotes the identity in the space $\mathcal{P}_{n}$.
\end{prop}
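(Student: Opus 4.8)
The plan is to verify the identity directly on the orthonormal basis of elementary paths, since both sides are linear operators on $\mathcal{P}_n$. Fix an elementary path $\eta=(v_0,v_1,\cdots,v_n)$ of length $n$ together with an index $i$ satisfying $0\leq i\leq n$, and compute $c_i c_i^{\dagger}\eta$ by applying the two definitions in turn.

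First I would apply $c_i^{\dagger}$ using \eqref{eq:crea}, obtaining a sum over the nearest neighbours $v$ of $v_i$ of the paths $(v_0,\cdots,v_i,v,v_i,v_{i+1},\cdots,v_n)$, each weighted by $\sqrt{\mu_v/\mu_{v_i}}$. The key observation is that in every such term the three consecutive vertices occupying positions $i,i+1,i+2$ are $v_i,v,v_i$, so this triple has equal endpoints. When I then apply $c_i$ to each term, the Kronecker delta $\delta_{v_i v_{i+2}}$ appearing in Definition \ref{def:Creation-and-annhilation} is therefore automatically equal to $1$, and the annihilation operator simply deletes the inserted pair $(v,v_i)$, returning the original path $\eta$ with an extra factor $\sqrt{\mu_v/\mu_{v_i}}$. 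Hence each term contributes $(\mu_v/\mu_{v_i})\,\eta$, and summing gives $c_i c_i^{\dagger}\eta=\bigl(\sum_{v\,n.n.\,v_i}\mu_v/\mu_{v_i}\bigr)\eta$.

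The last step is to evaluate the scalar coefficient. Since $\mu$ is the Perron-Frobenius eigenvector of the adjacency matrix $M$, it satisfies $\sum_w M_{v_i w}\mu_w=\beta\mu_{v_i}$; because $M_{v_i w}=1$ exactly when $w$ is a nearest neighbour of $v_i$, this reads $\sum_{v\,n.n.\,v_i}\mu_v=\beta\mu_{v_i}$. Dividing by $\mu_{v_i}$ shows the coefficient is $\beta$, so $c_i c_i^{\dagger}\eta=\beta\eta$ for every basis path, which is the assertion $c_i c_i^{\dagger}=\beta 1_n$.

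I do not expect a genuine obstacle here, as the content is entirely computational; the argument is essentially that creation followed by the matching annihilation is a ``telescoping'' that leaves the path unchanged up to a weight. The one point requiring care is the index bookkeeping after $c_i^{\dagger}$ raises the length from $n$ to $n+2$: one must confirm that the position at which $c_i$ subsequently acts still lines up with the inserted triple, so that the delta fires and the two square-root factors multiply into $\mu_v/\mu_{v_i}$. It is also worth recording explicitly that the hypothesis $i\leq n$ is exactly what guarantees both that $c_i^{\dagger}$ acts nontrivially on $\mathcal{P}_n$ and that the ensuing $c_i$ remains within its allowed range on $\mathcal{P}_{n+2}$.
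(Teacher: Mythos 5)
Your proof is correct and is exactly the computation the paper leaves implicit: the paper states this identity (and again as eq.~(\ref{eq:4}) in Proposition \ref{sec:alge}) with only the remark that it ``follows from definition \ref{def:Creation-and-annhilation}''. Your direct verification on elementary paths --- the inserted triple $(v_i,v,v_i)$ forcing the delta to fire, the weights combining to $\mu_v/\mu_{v_i}$, and the Perron--Frobenius relation $\sum_{v\,n.n.\,v_i}\mu_v=\beta\mu_{v_i}$ supplying the factor $\beta$ --- is precisely that omitted argument, including the correct index-range check $i\leq n$.
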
 , \begin{prop} The following operators $e_{i}:\mathcal{P}_{n}\to\mathcal{P}_{n}\;\;,i=0,\cdots,n-2$
,\[
e_{i}=\frac{1}{\beta}c_{i}^{\dagger}c_{i}\]
 give a representation of the Temperley-Lieb-Jones algebra with $n-1$
generators. This algebra is defined by the following relations,\begin{eqnarray*}
e_{i}^{2}=e_{i}, & \;\; e_{i}^{\dagger}=e_{i},\;\; & e_{i}e_{j}=e_{j}e_{i}\;,|i-j|>1,\;\; e_{i}e_{i\pm1}e_{i}=\frac{1}{\beta^{2}}e_{i}\end{eqnarray*}

\end{prop}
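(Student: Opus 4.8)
The plan is to reduce all four relations to the single structural fact of Proposition \ref{pro:For-,4}, namely $c_ic_i^{\dagger}=\beta1$, together with the explicit action of $e_i$ on elementary paths, treating the two ``algebraic'' relations abstractly and the two ``geometric'' ones by direct computation.

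First I would dispose of idempotency and self-adjointness, which are immediate. Since $c_i$ lowers the length by two and $c_i^{\dagger}$ raises it by two, on $\mathcal{P}_n$ the composite $e_i^2=\frac{1}{\beta^2}c_i^{\dagger}(c_ic_i^{\dagger})c_i$ contains the middle factor $c_ic_i^{\dagger}$ acting on $\mathcal{P}_{n-2}$; by Proposition \ref{pro:For-,4} this equals $\beta1$, so $e_i^2=\frac{1}{\beta}c_i^{\dagger}c_i=e_i$. For self-adjointness one first checks, by a one-line pairing of basis paths, that $c_i^{\dagger}$ is genuinely the adjoint of $c_i$ for the inner product on $\mathcal{P}$: the weight $\sqrt{\mu_v/\mu_{v_i}}$ produced by $c_i^{\dagger}$ when it inserts the detour through $v$ matches the weight produced by $c_i$ when it removes that detour on the corresponding longer path. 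Granting this, $e_i^{\dagger}=\frac{1}{\beta}(c_i^{\dagger}c_i)^{\dagger}=\frac{1}{\beta}c_i^{\dagger}c_i=e_i$.

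For the remaining two relations I would first record the explicit action obtained by composing the two definitions,
\[
e_i\,(v_0,\dots,v_n)=\frac{1}{\beta}\,\delta_{v_iv_{i+2}}\sum_{w\,\text{n.n.}\,v_i}\frac{\sqrt{\mu_{v_{i+1}}\mu_w}}{\mu_{v_i}}\,(v_0,\dots,v_i,w,v_i,v_{i+3},\dots,v_n),
\]
which shows that $e_i$ reads the vertices at positions $i,i+1,i+2$, enforces $v_i=v_{i+2}$, and alters only the vertex at position $i+1$. The commutation relation $e_ie_j=e_je_i$ for $|i-j|>1$ then follows from locality: the sites written by $e_i$ and by $e_j$ are distinct, and in the boundary case $j=i+2$ the single site they share (position $i+2$) is only read, never modified, by either operator, so the two orders of composition give the same basis expansion. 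I would confirm this by evaluating both sides on a generic elementary path.

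The main work, and the step I expect to be the genuine obstacle, is the relation $e_ie_{i\pm1}e_i=\frac{1}{\beta^2}e_i$. I would evaluate $e_ie_{i+1}e_i$ on an elementary path by iterating the formula above over the four relevant sites $(v_i,v_{i+1},v_{i+2},v_{i+3})$. The point to watch is that the two intermediate summations collapse completely: the constraint imposed by $e_{i+1}$ pins the summation variable introduced by the first $e_i$, and the constraint re-imposed by the outer $e_i$ pins the variable introduced by $e_{i+1}$, so that after cancelling the $\mu$-weights one is left with exactly the single neighbour sum defining $e_i$, multiplied by $\beta^{-3}$; comparison with $\beta^{-1}e_i$ yields the factor $\beta^{-2}$. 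It is worth noting that, unlike idempotency, this relation does not invoke the Perron--Frobenius identity $\sum_{w\,\text{n.n.}\,v}\mu_w=\beta\mu_v$ at all --- the powers of $\beta$ come solely from the three normalizations --- so the only real difficulty is the careful bookkeeping of the weight factors and Kronecker deltas. The case $e_ie_{i-1}e_i$ is handled by the mirror-image computation.
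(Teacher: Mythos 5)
Your proposal is correct, but there is nothing in the paper to compare it against: the proposition is stated without proof (it is a standard fact, implicitly deferred to the cited literature), so your verification fills a genuine omission. A few comparative remarks. Your preliminary check that $c_{i}^{\dagger}$ really is the adjoint of $c_{i}$ --- the matching of the weight $\sqrt{\mu_{v}/\mu_{v_{i}}}$ on the two sides of the pairing --- is indeed a necessary step that the paper's dagger notation leaves implicit, and your reduction $e_{i}^{2}=\beta^{-2}c_{i}^{\dagger}(c_{i}c_{i}^{\dagger})c_{i}=e_{i}$ via Proposition \ref{pro:For-,4} isolates correctly the one place where the Perron--Frobenius identity $\sum_{w\,\text{n.n.}\,v}\mu_{w}=\beta\mu_{v}$ enters. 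For the two remaining relations, the bookkeeping you identify as the main obstacle can in fact be bypassed: the operator identities the paper proves later, in Propositions \ref{order} and \ref{sec:alge}, are precisely your ``pinned summation'' computations in packaged form. In particular eq.~(\ref{eq:3}), $c_{i}c_{i\pm1}^{\dagger}=1_{n}$, collapses the exchange relation in one line,
\[
e_{i}e_{i\pm1}e_{i}=\frac{1}{\beta^{3}}\,c_{i}^{\dagger}\,(c_{i}c_{i\pm1}^{\dagger})\,(c_{i\pm1}c_{i}^{\dagger})\,c_{i}=\frac{1}{\beta^{3}}\,c_{i}^{\dagger}c_{i}=\frac{1}{\beta^{2}}\,e_{i},
\]
while eqs.~(\ref{eq:2})--(\ref{eq:2'}) together with Proposition \ref{order} reduce both $e_{i}e_{j}$ and $e_{j}e_{i}$, for $j>i+1$, to the same expression $\beta^{-2}c_{j}^{\dagger}c_{i}^{\dagger}c_{j-2}c_{i}$. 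Your direct vertex-by-vertex evaluation is more self-contained --- those propositions appear only in Section 5, logically after this statement --- whereas the algebraic route is shorter; and your observation that the powers of $\beta$ in the TLJ relation come solely from the three normalizations, never from Perron--Frobenius, is exactly mirrored in the fact that eq.~(\ref{eq:3}) carries no factor of $\beta$. One small point worth making explicit when you write out $e_{i}e_{i+1}e_{i}$: when the outer $e_{i}$ pins the variable $u$ to $v_{i}$, the surviving term requires $v_{i}$ to be a neighbour of $v_{i+3}$, which holds automatically because $\delta_{v_{i}v_{i+2}}$ has already been enforced and $v_{i+2},v_{i+3}$ are consecutive vertices of an elementary path.
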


\section{Essential paths}

\begin{defn}\label{sec:-Essential-Paths} \emph{Essential Paths Subspace.}
For each $n$ we denote by $\mathcal{E}_{n}$ the subspace of $\mathcal{P}_{n}$
defined by the relations,\[
\xi\in\mathcal{E}_{n}\Leftrightarrow c_{i}\xi=0,\; i=0,\cdots,n-2\]
 the essential paths subspace $\mathcal{E}$ is defined by,\[
\mathcal{E}=\bigoplus_{n}\mathcal{E}_{n}\]
 \end{defn}this definition implies that,

\begin{prop} For all the ADE graphs $\mathcal{E}$ is finite dimensional%
\footnote{See for example ref.\cite{Zubar} for a proof of this result.%
}. \end{prop}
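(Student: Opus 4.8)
The plan is to prove the stronger statement that essential paths have \emph{bounded length}: there is an integer $n_{\max}$, depending only on $G$, with $\mathcal{E}_n = 0$ for all $n > n_{\max}$. Since $\mathcal{E}_n \subseteq \mathcal{P}_n$ and $\dim \mathcal{P}_n = \sum_{a,b}(M^n)_{ab} < \infty$, this at once gives that $\mathcal{E} = \bigoplus_{n=0}^{n_{\max}} \mathcal{E}_n$ is a finite sum of finite-dimensional spaces, hence finite dimensional. Everything rests on the arithmetic of the Perron-Frobenius eigenvalue: for an ADE graph the eigenvalues of $M$ are the numbers $2\cos(\pi m_j/h)$, where $h$ is the Coxeter number and the $m_j$ are the exponents, so in particular $\beta = 2\cos(\pi/h) < 2$. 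I will argue that $n_{\max} = h-2$.

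First I would reformulate the defining condition in terms of the Temperley-Lieb-Jones generators. Because $c_i^{\dagger}$ is the adjoint of $c_i$ and the inner product is positive definite, $e_i\xi = \frac{1}{\beta}c_i^{\dagger}c_i\xi = 0$ forces $\langle c_i\xi, c_i\xi\rangle = 0$ and hence $c_i\xi = 0$; the converse is immediate. Thus $\xi \in \mathcal{E}_n$ if and only if $e_i\xi = 0$ for every $i$, so by the proposition giving the Temperley-Lieb-Jones representation, $\mathcal{E}_n$ is exactly the subspace of $\mathcal{P}_n$ annihilated by all the generators, i.e. the image of the Jones-Wenzl projector $p_n \in TL_n(\beta)$ acting in the path representation. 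Next I would identify $\dim(\mathcal{E}_n)_{ab}$, the number of essential paths from $a$ to $b$, with the entries of a Chebyshev polynomial in $M$. Using the orthogonal decomposition of $\mathcal{P}_n$ into $\mathcal{E}_n$ and the images of the creation operators, together with the relation $c_i c_i^{\dagger} = \beta 1_n$, one obtains the recursion $\mathrm{Ess}_{n+1} = M\,\mathrm{Ess}_{n} - \mathrm{Ess}_{n-1}$ with $\mathrm{Ess}_0 = 1$ and $\mathrm{Ess}_1 = M$ (I have checked this directly for $n = 0,1,2$). Hence $\mathrm{Ess}_n = U_n(M)$, the normalized second-kind Chebyshev polynomial with $U_n(2\cos\theta) = \sin((n+1)\theta)/\sin\theta$. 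Evaluating on the ADE spectrum, $U_{h-1}(2\cos(\pi m_j/h)) = \sin(\pi m_j)/\sin(\pi m_j/h) = 0$ for every eigenvalue, so $U_{h-1}(M) = 0$ and therefore $\mathcal{E}_{h-1} = 0$.

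The hard part is the \emph{permanence} of this vanishing: one must rule out essential paths of every length $n \ge h-1$, not merely $n = h-1$. The naive recursion cannot simply be iterated, since $U_h(M) \ne 0$ has negative entries and so cannot count dimensions; in fact the recursion breaks down exactly at the Coxeter number, and this is where the ADE hypothesis $\beta < 2$ is indispensable. The clean way to close the argument is representation-theoretic: in a \emph{unitary} representation of $TL_n(\beta)$ — which is precisely what the positive-definite inner product on $\mathcal{P}_n$ supplies — the Jones-Wenzl projector $p_n$ is nonzero only if the quantum integer $[n+1] = \sin((n+1)\pi/h)/\sin(\pi/h)$ is positive, by Jones' restriction on admissible index values. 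Since $[k] > 0$ for $1 \le k \le h-1$ and $[h] = 0$, this forces $p_n = 0$, hence $\mathcal{E}_n = 0$, for all $n \ge h-1$.

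I expect this unitarity/termination step to be the real content and the main obstacle — equivalently, the statement that the Temperley-Lieb-Jones representation on paths descends to the semisimple truncated fusion category $SU(2)_{h-2}$, whose simple objects carry levels $0,\dots,h-2$. It is exactly this step that fails for the affine graphs and the $\beta > 2$ graphs, where $\beta \ge 2$, all the projectors $p_n$ survive, essential paths of unbounded length exist, and $\mathcal{E}$ is consequently infinite dimensional. With the termination in hand, $\mathcal{E} = \bigoplus_{n=0}^{h-2}\mathcal{E}_n$ is a finite direct sum of finite-dimensional spaces, which is the assertion.
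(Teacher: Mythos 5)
You should know at the outset that the paper does not prove this proposition at all: the footnote delegates it to ref.\ \cite{Zubar}, and the paper later simply quotes the consequence $L=N-2$ inside the lemma of its decomposition theorem. So the comparison is with the standard argument, which is indeed the one you are reconstructing: $\dim(\mathcal{E}_n)_{ab}=(U_n(M))_{ab}$ for $0\le n\le h-1$, the ADE spectrum $\{2\cos(\pi m_j/h)\}$ forces $U_{h-1}(M)=0$, hence $\mathcal{E}_n=0$ for $n>h-2$. Your write-up, however, has two genuine gaps. The first is that the recursion $\mathrm{Ess}_{n+1}=M\,\mathrm{Ess}_n-\mathrm{Ess}_{n-1}$ is the entire content of the dimension count, and you only assert it ("one obtains"), checking $n=0,1,2$. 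Its proof amounts to exactness of $0\to\mathcal{E}_{n+1}\to\mathcal{E}_n\star\mathcal{P}_1\to\mathcal{E}_{n-1}\to0$, where the last map is $c_{n-1}$ restricted to paths essential in their first $n$ steps; the inclusion and the fact that $c_{n-1}$ lands in essentials follow from the commutation relations (the paper's propositions \ref{order} and \ref{sec:alge}), but the \emph{surjectivity} of $c_{n-1}$ is not automatic: for $\eta\in\mathcal{E}_{n-1}$ the lift $c_{n-1}^{\dagger}\eta$ satisfies $c_{n-1}c_{n-1}^{\dagger}\eta=\beta\eta$ but is in general \emph{not} essential in its first $n$ steps (e.g.\ $(0,1,0)$ over $A_3$), so $c_{n-1}c_{n-1}^{\dagger}=\beta 1$ alone does not close the argument. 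Since you yourself observe the recursion must break down past $n=h-1$, an honest proof has to locate where it degenerates; small-case verification cannot substitute for this.

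The second gap is worse: the permanence step via "Jones' restriction in unitary representations" fails as stated. It is not true that in a unitary representation of the Temperley--Lieb--Jones relations the Jones--Wenzl projector $p_n$ must vanish when $[n+1]\le 0$: the zero representation $e_i=0$ satisfies all the relations of the paper (including $e_i^{\dagger}=e_i$) for \emph{any} $\beta$, has the whole space as joint kernel of the $e_i$, and evaluates $p_n$ to $1$; taking a direct sum with a faithful representation even defeats the hypothesis $e_i\ne 0$. Jones' and Wenzl's restriction theorems require a faithful Markov trace or comparable extra structure, which you never exhibit on the path representation; moreover, for $n\ge h$ the element $p_n$ does not even exist in $TL(\beta)$, because $[h]=0$ blocks the Wenzl recursion, so "$p_n=0$ for $n\ge h-1$" is not well formed. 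The correct fix is elementary and representation-free: a \emph{prefix} of an essential path is essential. For a fixed elementary tail $\tau$ of length $n-m$, the truncation $T_{\tau}:\mathcal{P}_n\to\mathcal{P}_m$, $(v_0,\dots,v_n)\mapsto\delta_{(v_m,\dots,v_n),\tau}\,(v_0,\dots,v_m)$, commutes with $c_i$ for $i\le m-2$, so $\xi\in\mathcal{E}_n$ gives $T_{\tau}\xi\in\mathcal{E}_m$ for every $\tau$, and since $\xi=\sum_{\tau}(T_{\tau}\xi)\star\tau$, the vanishing $\mathcal{E}_{h-1}=0$ propagates to all $n\ge h-1$. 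This is exactly the prefix fact the paper itself invokes inside its lemma ("the path obtained by \dots including the first $n-i-1$ steps would be an essential path of length greater than $L$"). With that replacement, and a genuine proof (or citation) of the recursion, your argument becomes the standard proof the paper outsources to \cite{Zubar}; your closing remark contrasting $\beta<2$ with $\beta\ge 2$ is correct and consistent with the paper's examples.
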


In what follows we will denote by $\left\{ \xi_{a}\right\} $ an orthonormal
basis of $\mathcal{E}$ (with respect to the restriction to $\mathcal{E}$
of the scalar product in $\mathcal{P}$) , i.e. $(\xi_{a},\xi_{b})=\delta_{ab}$.
\begin{example}
\emph{\label{exa:Essentials-paths-for}Essentials paths for the graph
$A_{3}$ . }The graph $A_{3}$, its adjacency matrix, Perron-Frobenius
eigenvalue and eigenvector are given bellow,

\begin{figure}[h]
$${ \begin{diagram}[size=0.8em,abut] 
0 & \, & 1 & \, & 2 \\ 
\bullet&\rLine & \bullet &\rLine & \bullet \\ 
& \, & \, & \, &\, 
\end{diagram} }\;\;
\qquad , M=\left( \begin{array}{lll} 0&1 &0\\ 1&0 &1\\ 0&1 &0 \end{array} \right) ,\;\beta=\sqrt{2},\;\;\mu=\left( \begin{array}{l} 1 \\ \sqrt{2} \\ 1 \end{array}\right)$$
\caption{The graph $A_3$} 
\end{figure}\noindent there are ten essential paths in $A_{3}$, which are,

Length zero: $(0),(1),(2)$

Length one: $(01),(12),(10),(21)$

Length two: $(012),\gamma=\frac{1}{\sqrt{2}}[(121)-(101)],(210)$
\end{example}
Therefore the maximum length of essential paths over $A_{3}$ is $L=2$.
\begin{example}
\emph{Essentials paths for the graph $A_{[2]}$}. The graph $A_{[2]}$,
its adjacency matrix, Perron-Frobenius eigenvalue and eigenvector
are given bellow,

\begin{figure}[h]
$${ \begin{diagram}[size=0.8em,abut] 
&  & 1 &  &\\
&  & \bullet &  &  \\ 
&\ruLine & &\rdLine &  \\ 
\bullet & &\hLine   &  &\bullet \\
0 &  &  &  &2 
\end{diagram} }
\qquad , \qquad M\left( \begin{array}{lll} 0&1 &1\\ 1&0 &1\\ 1&1 &0 \end{array}\right)
,\;\beta=2,\;\;\mu=\left( \begin{array}{l} 1 \\ 1 \\ 1 \end{array}\right)
$$
\caption{The graph $A_{[2]}$} 
\end{figure}There are essential paths of any length in $A_{[2]}$. Any cyclic
sequence of consecutive vertices defines an essential path, as for
example $(120120contiguous120120\cdots)$.
\end{example}

\section{Decomposition of the space of paths}

\begin{defn}\emph{Maximum length of essential paths.} \label{L}$L$
is the maximum length of essential paths in a graph $G$ iff any path
of length greater than $L$ is necessarily non-essential.

\end{defn}

The following result will be used to write the above mentioned decomposition,

\begin{prop}\label{order}

\[
c_{i}^{\dagger}c_{j}^{\dagger}=c_{j+2}^{\dagger}c_{i}^{\dagger}\;\;\;\forall\, j\geq i\,(\Rightarrow c_{j}c_{i}=c_{i}c_{j+2}\;\;\;\forall\, j\geq i)\]

\end{prop}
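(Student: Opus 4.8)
The plan is to verify the identity directly on the orthonormal basis of elementary paths, since both sides are linear operators on $\mathcal{P}$. Fix an elementary path $\eta=(v_0,v_1,\dots,v_n)$ and recall from Definition \ref{def:Creation-and-annhilation} that $c_k^\dagger$ acts by inserting, immediately after position $k$, a ``bump'' $(v,v_k)$ summed over nearest neighbours $v$ of $v_k$ with weight $\sqrt{\mu_v/\mu_{v_k}}$; crucially, this shifts every vertex originally sitting to the right of position $k$ two places to the right. The whole proof is then a bookkeeping comparison of the two resulting double sums.

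First I would compute the left-hand side $c_i^\dagger c_j^\dagger\eta$. Applying $c_j^\dagger$ produces $\sum_{w\,n.n.\,v_j}\sqrt{\mu_w/\mu_{v_j}}\,(v_0,\dots,v_j,w,v_j,v_{j+1},\dots,v_n)$. Since $i\le j$, the vertex sitting at position $i$ of each of these longer paths is still $v_i$, so $c_i^\dagger$ inserts a bump $(u,v_i)$ there, with $u$ ranging over the nearest neighbours of $v_i$. The result is a double sum over $u$ and $w$ of $\sqrt{\mu_u/\mu_{v_i}}\,\sqrt{\mu_w/\mu_{v_j}}$ times the path $(v_0,\dots,v_i,u,v_i,v_{i+1},\dots,v_j,w,v_j,v_{j+1},\dots,v_n)$.

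Then I would compute the right-hand side $c_{j+2}^\dagger c_i^\dagger\eta$. Applying $c_i^\dagger$ first gives $\sum_{u\,n.n.\,v_i}\sqrt{\mu_u/\mu_{v_i}}\,(v_0,\dots,v_i,u,v_i,v_{i+1},\dots,v_n)$; the key observation is that this insertion moves the original vertex $v_j$ (with $j\ge i$) from position $j$ to position $j+2$, while leaving it equal to $v_j$. Hence $c_{j+2}^\dagger$ inserts a bump $(w,v_j)$ at exactly that vertex, and one obtains the very same double sum, with the scalar weights merely written in the opposite order. Comparing the two expressions term by term shows they coincide, proving the identity. I would also check that the boundary clauses match: $0\le j\le n$ and $0\le i\le n$ are precisely the conditions making both composites nonzero (on the right, $c_{j+2}^\dagger$ requires $j+2\le n+2$), so the ``$0$ otherwise'' convention is respected on both sides.

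The only place requiring real care is the index bookkeeping — in particular confirming that the vertex acted on by the second creation operator is in each computation the correct one (the fixed $v_i$ on the left, the displaced $v_j$ on the right), and handling the degenerate case $i=j$ cleanly, where both bumps are grafted onto copies of the same base vertex. Finally, the parenthetical statement for the annihilation operators follows by taking adjoints of the established relation: since $(c_k^\dagger)^\dagger=c_k$ and $(AB)^\dagger=B^\dagger A^\dagger$, dualizing $c_i^\dagger c_j^\dagger=c_{j+2}^\dagger c_i^\dagger$ immediately yields $c_j c_i=c_i c_{j+2}$ for all $j\ge i$.
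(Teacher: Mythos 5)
Your proof is correct and follows essentially the same route as the paper's: both apply the two composites to an elementary basis path, expand each as a double sum over inserted ``bumps'' $(u,v_i)$ and $(w,v_j)$, and observe that the resulting weighted paths coincide term by term (your added checks of the boundary conditions and the adjoint argument for the parenthetical $c_j c_i = c_i c_{j+2}$ are minor refinements the paper leaves implicit).
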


\begin{proof}\begin{eqnarray*}
c_{i}^{\dagger}c_{j}^{\dagger}(v_{0},v_{1},\cdots,v_{i},\cdots,v_{j},\cdots,v_{n}) & = & \sum_{v}\sqrt{\frac{\mu_{v}}{\mu_{v_{j}}}}\; c_{i}^{\dagger}(v_{0},v_{1},\cdots,v_{i},\cdots,v_{j},v,v_{j},\cdots,v_{n})\\
 & = & \sum_{v,v'}\sqrt{\frac{\mu_{v}\mu_{v'}}{\mu_{v_{j}}\mu_{v_{i}}}}(v_{0},v_{1},\cdots,v_{i},v',v_{i},\cdots,v_{j},v,v_{j},\cdots,v_{n})\end{eqnarray*}
on the other hand,\begin{eqnarray*}
c_{j+2}^{\dagger}c_{i}^{\dagger}(v_{0},v_{1},\cdots,v_{i},\cdots,v_{j},\cdots,v_{n}) & = & \sum_{v'}\sqrt{\frac{\mu_{v'}}{\mu_{v_{i}}}}\; c_{j+2}^{\dagger}(v_{0},v_{1},\cdots,v_{i},v',v_{i},\cdots,v_{j},\cdots,v_{n})\\
 & = & \sum_{v,v'}\sqrt{\frac{\mu_{v}\mu_{v'}}{\mu_{v_{j}}\mu_{v_{i}}}}(v_{0},v_{1},\cdots,v_{i},v',v_{i},\cdots,v_{j},v,v_{j},\cdots,v_{n})\end{eqnarray*}

\end{proof}

\begin{prop}\label{sec:alge}The operators $c_{i}c_{j}^{\dagger}:\mathcal{P}_{n}\to\mathcal{P}_{n}$
satisfy,\begin{eqnarray}
c_{i}c_{j}^{\dagger} & = & c_{j-2}^{\dagger}c_{i}\;\quad if\;\; i<j-1\label{eq:2}\\
c_{i}c_{j}^{\dagger} & = & c_{j}^{\dagger}c_{i-2}\;\quad if\;\; i>j+1\label{eq:2'}\\
c_{i}c_{i\pm1}^{\dagger} & = & 1_{n}\;\;,i\pm1\leq n\label{eq:3}\\
c_{i}c_{i}^{\dagger} & = & \beta1_{n}\;\;,i\leq n\label{eq:4}\end{eqnarray}
 which imply, \begin{eqnarray}
c_{i}c_{j}^{\dagger} & = & (\beta\,\delta_{i,j}+\delta_{i,j+1}+\delta_{i,j-1})1_{n}+\theta(j-(i+2))c_{j-2}^{\dagger}c_{i}+\theta(-j+(i-2))c_{j}^{\dagger}c_{i-2}\label{cc}\nonumber\\
 & = & (\beta\,\delta_{i-j,0}+\delta_{i-j,1}+\delta_{i-j,-1})1_{n}+\theta(2-(i-j))c_{j-2}^{\dagger}c_{i}+\theta((i-j)-2)c_{j}^{\dagger}c_{i-2}\label{eq:cc1}\end{eqnarray}
 where $1_{n}$ is the identity operator in $\mathcal{P}_{n}$ and
the function $\theta$ is defined by,\[
\theta(i)=\begin{cases}
\begin{array}{l}
1\; if\; i\geq0\\
0\; otherwise\end{array}\end{cases}\]
 \end{prop}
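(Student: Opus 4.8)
The plan is to verify the four relations by applying $c_i c_j^\dagger$ directly to an arbitrary orthonormal basis path $\eta=(v_0,\ldots,v_n)$ and tracking how the index of each operator shifts when $c_j^\dagger$ lengthens the path by two; the compact formula is then read off by a case analysis on the integer $i-j$. The relation $c_i c_i^\dagger=\beta 1_n$ is already Proposition \ref{pro:For-,4}, so the real work is the three remaining identities. The useful preliminary bookkeeping is to write $c_j^\dagger\eta$ explicitly: by the definition \eqref{eq:crea} it is the weighted sum over nearest neighbours $v$ of $v_j$ of the path obtained by inserting the pair $(v,v_j)$ immediately after position $j$. Relabelling that path as $(w_0,\ldots,w_{n+2})$ gives $w_k=v_k$ for $k\le j$, the inserted pair $w_{j+1}=v$, $w_{j+2}=v_j$, and $w_k=v_{k-2}$ for $k\ge j+2$. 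Everything reduces to where the subsequent annihilation $c_i$ — which tests $w_i=w_{i+2}$ and, if satisfied, deletes $w_{i+1},w_{i+2}$ with weight $\sqrt{\mu_{w_{i+1}}/\mu_{w_i}}$ — falls relative to this inserted pair.

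For the two ``distant'' regimes the operations act on disjoint segments. If $i\le j-2$ (the case $i<j-1$), the triple $(w_i,w_{i+1},w_{i+2})=(v_i,v_{i+1},v_{i+2})$ lies entirely to the left of the inserted pair, so $c_i$ acts exactly as on $\eta$ while the pair, originally at positions $j+1,j+2$, slides two places left; this yields $c_i c_j^\dagger=c_{j-2}^\dagger c_i$. Symmetrically, if $i\ge j+2$ (the case $i>j+1$) the triple sits to the right of the inserted pair where $w_k=v_{k-2}$, so the test $w_i=w_{i+2}$ reads $v_{i-2}=v_i$ with weight $\sqrt{\mu_{v_{i-1}}/\mu_{v_{i-2}}}$, reproducing $c_{i-2}$ on $\eta$ with the inserted pair untouched; this gives $c_i c_j^\dagger=c_j^\dagger c_{i-2}$.

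The genuinely distinct step is the pair of contractions $i=j\pm1$. Here the test $w_i=w_{i+2}$ now involves the inserted vertex $v$: for $j=i-1$ it forces $v=w_{i+2}=v_i$ and for $j=i+1$ it forces $v=w_i=v_i$, so in both cases only the single term $v=v_i$ of the creation sum survives, and it does occur because $v_i$ is a nearest neighbour of $v_{i\pm1}$ along an edge of $\eta$. The deletion then removes precisely the two vertices just inserted, returning $\eta$, while the creation weight $\sqrt{\mu_{v_i}/\mu_{v_{i\pm1}}}$ times the annihilation weight $\sqrt{\mu_{v_{i\pm1}}/\mu_{v_i}}$ collapses to $1$, proving $c_i c_{i\pm1}^\dagger=1_n$. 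It is instructive to contrast this with the diagonal: when $i=j$ the test reads $v_i=v_i$ and is met by every term, so the surviving sum is $\sum_{v\ \mathrm{n.n.}\ v_i}\mu_v/\mu_{v_i}=\beta$ by the Perron--Frobenius eigenvalue equation — this is exactly why the diagonal returns $\beta$ but the off-diagonal-by-one returns $1$.

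Assembling the closed form is then a routine partition of the pairs $(i,j)$ by the value of $i-j$: the three values $i-j\in\{-1,0,1\}$ are carried by the Kronecker-delta term (coefficient $\beta$ at $i-j=0$ and $1$ at $i-j=\pm1$), while $i-j\le-2$ and $i-j\ge2$ are switched on by the two step functions multiplying $c_{j-2}^\dagger c_i$ and $c_j^\dagger c_{i-2}$ respectively. Throughout, the domain restrictions $i\le n$, $i\pm1\le n$, and $0\le i\le n-2$ must be carried along so that each operator is applied only where defined. I expect the only real obstacle to be bookkeeping: keeping the relabelling $w_k=v_{k-2}$ and the induced index shifts consistent when the length jumps by two, and confirming in the contraction cases that the square-root weights cancel to exactly $1$ rather than leaving a stray $\mu$-factor.
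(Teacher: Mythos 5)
Your proof is correct and is essentially the paper's own argument: the paper disposes of this proposition with the single line ``It follows from definition \ref{def:Creation-and-annhilation},'' and your case analysis on $i-j$ (disjoint-segment commutation for $|i-j|\geq 2$, the single surviving term $v=v_i$ with cancelling weights for $i=j\pm 1$, and the Perron--Frobenius sum $\sum_{v\,n.n.\,v_i}\mu_v/\mu_{v_i}=\beta$ on the diagonal) is exactly the computation that one-liner leaves implicit. You have simply supplied the bookkeeping the paper omits, and it checks out, including the index relabelling $w_k=v_{k-2}$ and the domain restrictions.
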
 \begin{proof}It follows from definition \ref{def:Creation-and-annhilation}.\end{proof}
The second equality in (\ref{eq:cc1}) has been written to emphasize
the fact that the coefficients of the different terms depend only
on the difference $i-j$.

\begin{thm} The following decomposition holds%
\footnote{Each term in this decomposition can be characterized by the number
of non-essential back and forth subpaths. It has certain similarities
with what is called Fock's space in quantum field theory, however
they are quite different in some interesting respects. The role of
the vacuum is played here by essential paths, so the analogy would
be a theory with many non-equivalent vaccums, the number of which
could be infinite as for example in the case of $A_{[2]}$. Excitations
are created out of the vacuum by means of the creation operators $c_{i}^{\dagger}$.
The algebra of these creation and annihilation operators being given
by (\ref{eq:cc1}) which depends on the shape of the graph and which
differs significantly from the canonical one, which is given in terms
of commutators, appearing in the case of Fock's space.%
},\begin{eqnarray}
\mathcal{P}_{n} & = & \mathcal{E}_{n}\,\bigoplus_{i\leq n-2}c_{i}^{\dagger}(\mathcal{E}_{n-2})\,\bigoplus_{i_{1}<i_{2}\leq n-2}c_{i_{2}}^{\dagger}c_{i_{1}}^{\dagger}(\mathcal{E}_{n-4})\bigoplus\cdots\nonumber\\
 &  & \bigoplus_{i_{1}<i_{2}\cdots<i_{[n/2]}\leq n-2}c_{i_{[n/2]}}^{\dagger}c_{i_{[n/2]-1}}^{\dagger}\cdots c_{i_{1}}^{\dagger}(\mathcal{E}_{1|0})\label{eq:decomp-2}\end{eqnarray}
where $[]$ denotes the integer part and in the last summand one should
take $1$ for $n$ odd and $0$ for $n$ even. \end{thm}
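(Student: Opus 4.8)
The plan is to prove (\ref{eq:decomp-2}) by induction on $n$, using as the single engine the orthogonal splitting forced by the essential-path condition. By Definition \ref{sec:-Essential-Paths} one has $\mathcal{E}_n=\bigcap_{i}\ker c_i$ inside the finite-dimensional inner product space $\mathcal{P}_n$, so that, since $c_i^{\dagger}$ is the adjoint of $c_i$,
\[
\mathcal{E}_n^{\perp}=\Big(\bigcap_{i\le n-2}\ker c_i\Big)^{\perp}=\sum_{i\le n-2}(\ker c_i)^{\perp}=\sum_{i\le n-2}\mathrm{im}\,c_i^{\dagger}.
\]
This already yields the fully rigorous first cut $\mathcal{P}_n=\mathcal{E}_n\oplus\sum_{i\le n-2}c_i^{\dagger}(\mathcal{P}_{n-2})$. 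First I would feed the inductive hypothesis, namely (\ref{eq:decomp-2}) for $\mathcal{P}_{n-2}$, into the right-hand summand, so that every vector of $\mathcal{E}_n^{\perp}$ becomes a sum of monomials $c_i^{\dagger}c_{j_m}^{\dagger}\cdots c_{j_1}^{\dagger}\xi$ with $\xi$ essential. Then I would apply the reordering relation of Proposition \ref{order} repeatedly to bring each monomial to the canonical increasing form $c_{i_k}^{\dagger}\cdots c_{i_1}^{\dagger}$ with $i_1<\cdots<i_k$; since any product of creation operators sorts this way by a bubble-sort induction, this shows that the subspaces listed in (\ref{eq:decomp-2}) span $\mathcal{P}_n$. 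The base cases $n=0,1$ are immediate because $\mathcal{P}_0=\mathcal{E}_0$ and $\mathcal{P}_1=\mathcal{E}_1$.

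The heart of the argument is orthogonality between distinct levels, i.e.\ between $c_{i_k}^{\dagger}\cdots c_{i_1}^{\dagger}(\mathcal{E}_{n-2k})$ and $c_{j_{k'}}^{\dagger}\cdots c_{j_1}^{\dagger}(\mathcal{E}_{n-2k'})$ with $k<k'$. For $\xi\in\mathcal{E}_{n-2k}$ and $\zeta\in\mathcal{E}_{n-2k'}$ I would pass to the adjoint side,
\[
\langle c_{i_k}^{\dagger}\cdots c_{i_1}^{\dagger}\xi,\; c_{j_{k'}}^{\dagger}\cdots c_{j_1}^{\dagger}\zeta\rangle=\langle \xi,\; c_{i_1}\cdots c_{i_k}\,c_{j_{k'}}^{\dagger}\cdots c_{j_1}^{\dagger}\,\zeta\rangle,
\]
and then normal-order the middle operator with (\ref{eq:cc1}), pushing all annihilation operators to the right. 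Each resulting term is a scalar times a creation monomial followed by an annihilation monomial, and the net degree $\#c^{\dagger}-\#c=k'-k>0$ is preserved term by term. Any term still carrying an annihilation operator on the right dies against the essential $\zeta$; the surviving terms are pure creation monomials of degree $k'-k\ge1$ applied to $\zeta$, and moving one such operator back across the pairing turns it into an annihilation operator acting on the essential $\xi$, where $\langle\xi,c_a^{\dagger}\omega\rangle=\langle c_a\xi,\omega\rangle=0$. Hence all cross-level pairings vanish, the levels are mutually orthogonal, and their sum is automatically direct.

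What remains, and what I expect to be the main obstacle, is the directness of the sum within a single level, i.e.\ the linear independence of the images $c_{i_k}^{\dagger}\cdots c_{i_1}^{\dagger}(\mathcal{E}_{n-2k})$ as the ordered multi-index $(i_1<\cdots<i_k)$ varies. The natural tool is the Gram matrix of the proposed generators $\{c_{i_k}^{\dagger}\cdots c_{i_1}^{\dagger}\xi_a\}$: the same normal-ordering reduction collapses each inner product of two equal-level monomials to an explicit scalar, a polynomial in $\beta$ read off from the structure constants of (\ref{eq:cc1}), times $\langle\xi_a,\xi_b\rangle=\delta_{ab}$, so directness is equivalent to nondegeneracy of an explicit combinatorial matrix indexed by the admissible multi-indices (those for which every intermediate creation operator acts nontrivially, forcing $i_j\le n-2(k-j+1)$). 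Establishing this nondegeneracy is the delicate point, and the value of $\beta$ enters decisively: for $\beta\ge2$, as in the affine example $A_{[2]}$, the relevant matrices are positive definite and independence is automatic, whereas for the ADE values $\beta=2\cos(\pi/h)$ the matrix can degenerate at special lengths, so one must cut down to a maximal independent family of multi-indices, which is exactly where the finiteness bound $L$ of Definition \ref{L} and the arithmetic of $\beta$ have to be used. Once this nondegeneracy is secured, within-level directness combines with the cross-level orthogonality and the spanning statement to give (\ref{eq:decomp-2}).
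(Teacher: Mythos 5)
Your first two steps are correct, and they constitute a genuinely different route from the paper's. The paper proves spanning \emph{constructively}: its key lemma writes $\eta=\sum_{k\ge i}\alpha_k\,c_k^{\dagger}(c_i(\eta))+\xi^{(i)}$, where the $\alpha_k$ solve a tridiagonal linear system whose determinant $D_{n-1-i}(\beta)$ in (\ref{eq:dete}) must not vanish; this is exactly where the arithmetic of $\beta$ and the bound $n-i-1\le L$ (via the path-reversal argument and $L=N-2$ for ADE) enter, and the decomposition is then produced by iterating the lemma and sorting indices with Proposition \ref{order}. Your orthocomplement identity $\bigl(\bigcap_{i}\ker c_i\bigr)^{\perp}=\sum_i\mathrm{im}\,c_i^{\dagger}$, combined with induction on $n$ and the same sorting, bypasses the determinant analysis entirely and is more elementary; your cross-level orthogonality argument is essentially the paper's Propositions \ref{me} and \ref{ortdecomp}. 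So where the paper needs the Chebyshev-type nonvanishing for \emph{existence} of the decomposition, you get existence for free and the special values of $\beta$ never enter.

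The gap is your third section, which you flag as the heart of the matter and then leave open — and it cannot be closed, because within-level directness over the multi-indices is \emph{false} for ADE graphs. Concretely, take $A_3$ and $n=4$: with $\gamma\in\mathcal{E}_2$, set $\chi=c_0^{\dagger}\gamma-\sqrt{2}\,c_1^{\dagger}\gamma+c_2^{\dagger}\gamma$. Using Proposition \ref{sec:alge} one checks $c_0\chi=\sqrt{2}\gamma-\sqrt{2}\gamma+c_0^{\dagger}c_0\gamma=0$, $c_1\chi=\gamma-2\gamma+\gamma=0$, $c_2\chi=c_0^{\dagger}c_0\gamma-\sqrt{2}\gamma+\sqrt{2}\gamma=0$, so $\chi\in\mathcal{E}_4=\{0\}$ (since $L=2$ for $A_3$), a nontrivial linear relation among the three level-one subspaces $c_i^{\dagger}(\mathcal{E}_2)$, $i=0,1,2$. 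Hence your Gram matrix is genuinely singular, and ``cutting down to a maximal independent family'' would prove a reindexed statement, not (\ref{eq:decomp-2}) as you read it. (Your aside that for $\beta\ge2$ the Gram matrices are ``positive definite automatically'' is also unsupported as written: Gram matrices are always positive semidefinite, and definiteness is precisely the question.) The resolution is interpretive: the inner $\bigoplus$'s in (\ref{eq:decomp-2}) must be read as spanning sums, and the only directness asserted — and the only one used later — is between the levels $P_n^{(l)}$, exactly as Proposition \ref{ortdecomp} makes precise by proving $M_{lm}\propto\delta_{lm}$ for $l\neq m$ only. With that reading, your spanning argument plus your cross-level orthogonality already complete the proof, and your third section should simply be deleted rather than repaired.
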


\begin{proof}The following important lemma will be employed in this
proof,

\begin{lemma}For all $\eta\in\mathcal{P}_{n}$ such that $c_{i}(\eta)\neq0$
for some $i$ and $c_{j}(\eta)=0\;\forall\, j$ such that $i<j<n-2$
there exist coefficients $\alpha_{k}\;,k=i,\cdots,n$ such that,\begin{equation}
\eta=\sum_{k=i}^{n-2}\;\alpha_{k}\, c_{k}^{\dagger}(c_{i}(\eta))+\xi^{(i)}\label{eq:papa}\end{equation}
with $\xi^{(i)}$ satisfying $c_{j}(\xi^{(i)})=0\;\forall\, j$ such
that $i-1<j<n-2$. 

\end{lemma}\begin{proof}Consider the application of $c_{i}$ to
eq.(\ref{eq:papa}), \begin{eqnarray*}
c_{i}(\eta) & = & \sum_{k=i}^{n-2}\;\alpha_{k}\, c_{i}c_{k}^{\dagger}(c_{i}(\eta))+c_{i}(\xi^{(i)})\\
 & = & \sum_{k=i}^{n-2}\;\alpha_{k}\,\{(\beta\,\delta_{i,k}+\delta_{i,k-1})+\theta(k-(i+2))c_{k-2}^{\dagger}c_{i}](c_{i}(\eta))+c_{i}(\xi^{(i)})\\
 & = & (\beta\alpha_{i}+\alpha_{i+1})c_{i}(\eta)+c_{i}(\xi^{(i)})\end{eqnarray*}
where proposition \ref{sec:alge} was employed in the second equality
and proposition \ref{order} in the third. Therefore if we choose
$\alpha_{i}$ and $\alpha_{i+1}$ such that,\[
\beta\alpha_{i}+\alpha_{i+1}=1\]
then $c_{i}(\xi^{(i)})=0$. In general the application of $c_{i+l}\;,l=0,\cdots,n-2-i$
to eq. (\ref{eq:papa}) is considered,

\begin{eqnarray*}
c_{i+l}(\eta) & = & \sum_{k=i}^{n-2}\;\alpha_{k}\, c_{i+l}c_{k}^{\dagger}(c_{i}(\eta))+c_{i+l}(\xi^{(i)})\\
 & = & \sum_{k=i}^{n-2}\;\alpha_{k}\,\{(\beta\,\delta_{i+l,k}+\delta_{i+l,k-1}+\delta_{i+l,k+1})+\theta(2-(i+l-k))c_{k-2}^{\dagger}c_{i+l}\\
 &  & +\theta(l-2)\theta((i+l-k)-2))c_{k}^{\dagger}c_{i+l-2}](c_{i}(\eta))+c_{i+l}(\xi^{(i)})\\
 & = & (\beta\alpha_{i+l}+\alpha_{i+l+1}+\alpha_{i+l-1})c_{i}(\eta)+c_{i+l}(\xi^{(i)})\end{eqnarray*}
therefore if the coefficients $\alpha_{k}$ can be chosen such that,\[
\left(\begin{array}{c}
1\\
0\\
0\\
\vdots\\
0\end{array}\right)=\left(\begin{array}{ccccc}
\beta & 1 & 0 & \cdots & 0\\
1 & \beta & 1 & \cdots & 0\\
 & 1 & \beta & 1\\
 &  & \ddots & \ddots & \ddots\\
 &  &  & 1 & \beta\end{array}\right)\left(\begin{array}{c}
\alpha_{i}\\
\alpha_{i+1}\\
\alpha_{i+2}\\
\vdots\\
\alpha_{n-2}\end{array}\right)\]
then the result follows because $c_{i+l}(\eta)=0\;\;,l=1,\cdots,n-2-i$
by hypothesis. The determinant of this $(n-1-i)\times(n-1-i)$ matrix
can be calculated recursively%
\footnote{The same determinant appears in the calculation of the harmonic oscillator
transition probability using the path integral(see \cite{itzu}, p.
431). %
} leading to,\begin{equation}
D_{n-1-i}(\beta)=\det\left|\begin{array}{ccccc}
\beta & 1 & 0 & \cdots & 0\\
1 & \beta & 1 & \cdots & 0\\
 & 1 & \beta & 1\\
 &  & \ddots & \ddots & \ddots\\
 &  &  & 1 & \beta\end{array}\right|=\beta^{n-1-i}\,\frac{\lambda_{+}^{n-i}-\lambda_{-}^{n-i}}{\lambda_{+}-\lambda_{-}}\label{eq:dete}\end{equation}
where,\begin{equation}
\lambda_{\pm}=\frac{1\pm\sqrt{1-4\beta^{-2}}}{2}\label{eq:eigen}\end{equation}
for $\beta=2$ the two eigenvalues coincide, taking the limit $\beta\to2$
in (\ref{eq:dete})gives,\[
\lim_{\beta\to2}D_{n-1-i}(\beta)=(n-i)\]
which does not vanish for any $i$($i\leq n-2$). For $\beta\neq2$,
this determinant vanishes if,\begin{equation}
\lambda_{+}^{n-i}-\lambda_{-}^{n-i}=0\;\;\Rightarrow\left(\frac{\lambda_{+}}{\lambda_{-}}\right)^{n-i}=1\;\;(\beta\neq0)\label{eq:homog}\end{equation}
which has no solution for $\beta>2$. For the case $\beta<2$, the
ADE case, the eigenvalues are complex conjugate of each other, i.e.
$\lambda_{-}=\lambda_{+}^{*}$. From eq. (\ref{eq:eigen}) it is obtained,\[
\frac{\lambda_{+}}{\lambda_{-}}=e^{i\phi}\;\;,\phi\; such\; that\;\beta=2\cos\phi\]
but on the other hand for $\beta<2,\beta=2\cos\frac{\pi}{N}$ where
$N$ is the Coxeter number of $G$. However it is well known that
the maximum length of essential paths $L$ is related to the Coxeter
number by $L=N-2$, thus $\phi=\frac{\pi}{N}=\frac{\pi}{L+2}$, therefore
eq.(\ref{eq:homog})is the same as,\[
e^{i\frac{\pi(n-i)}{L+2}}=1\]
which can never be satisfied. This is so because under the assumptions
of this lemma the following inequality should hold $n-i-1\leq L$.
If it were not so then the path obtained by reversing $\eta$ and
including the first $n-i-1$ steps would be an essential path of length
greater than $L$ in the graph $G$ which is impossible by definition
of $L$.\end{proof}

Using this lemma the following algorithm can be employed to obtain
a unique decomposition of an arbitrary path $\eta\in\mathcal{P}_{n}$
as in the r.h.s. of (\ref{eq:decomp-2}). Decompose $\eta$ as in
(\ref{eq:papa}). Then decompose every $c_{i}(\eta)$ appearing in
the first term of the r.h.s. of (\ref{eq:papa}) using (\ref{eq:papa})
and do the same with $\xi^{(i)}$. At each step of this process the
resulting paths are annihilated by one more $c_{i}$ operator, since
the number of these operators that can act on an element of $\mathcal{P}_{n}$
is $n-2$ then this process necessarily converges to something belonging
to the r.h.s. of (\ref{eq:decomp-2}). The ordering of the indices
of the $c^{\dagger}$ operators in (\ref{eq:decomp-2}) follows using
proposition \ref{order}.\end{proof}

The following result is a simple consequence of the decomposition
(\ref{eq:decomp-2}).

\begin{prop}\label{ortdecomp}The subspaces of $\mathcal{P}_{n}$
given by,\[
P_{n}^{(l)}=\bigoplus_{i_{1}<i_{2}<\cdots\leq n-2}c_{i_{1}}^{\dagger}c_{i_{2}}^{\dagger}\cdots c_{i_{l}}^{\dagger}(\mathcal{E}_{n-2l})\,\;\;,P^{(0)}(n)=\mathcal{E}_{n},\;\; l=0,\cdots,[n/2]\]
are mutually orthogonal.\end{prop}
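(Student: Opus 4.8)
The plan is to prove the apparently stronger statement that any two \emph{spanning} vectors drawn from different summands are orthogonal; by sesquilinearity this immediately gives orthogonality of the subspaces $P_n^{(l)}$ themselves. So I would fix $l<l'$ and take typical generators $\eta=c_{i_1}^{\dagger}\cdots c_{i_l}^{\dagger}\xi\in P_n^{(l)}$ with $\xi\in\mathcal{E}_{n-2l}$, and $\eta'=c_{j_1}^{\dagger}\cdots c_{j_{l'}}^{\dagger}\zeta\in P_n^{(l')}$ with $\zeta\in\mathcal{E}_{n-2l'}$. Since $c_i^{\dagger}$ is by construction the Hermitian adjoint of $c_i$ (this is precisely what renders the operators $e_i=\beta^{-1}c_i^{\dagger}c_i$ self-adjoint), the first move is to strip the creation operators off the left-hand factor, writing $\langle\eta,\eta'\rangle=\langle\xi,\,c_{i_l}\cdots c_{i_1}\,c_{j_1}^{\dagger}\cdots c_{j_{l'}}^{\dagger}\,\zeta\rangle$.

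The heart of the argument is a normal-ordering step for the operator string $c_{i_l}\cdots c_{i_1}c_{j_1}^{\dagger}\cdots c_{j_{l'}}^{\dagger}$. Using the relations of Proposition \ref{sec:alge}, collected in (\ref{eq:cc1}), I would push every annihilation operator to the right past every creation operator. Each elementary commutation $c_i c_j^{\dagger}$ produces either an ``identity term'' proportional to $1_n$, which removes one $c$ and one $c^{\dagger}$, or a ``transport term'' $c_{j-2}^{\dagger}c_i$ or $c_j^{\dagger}c_{i-2}$, which retains one of each while moving the annihilation operator one slot to the right. Termination follows by noting that each transport strictly reduces the number of creation operators standing to the right of a given annihilation operator, so after finitely many steps the string is normal ordered: each resulting term is a product of $p$ creation operators followed by $q$ annihilation operators. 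The crucial invariant is that both the identity and the transport terms preserve the difference (number of $c^{\dagger}$) minus (number of $c$), so that $p-q=l'-l\geq 1$ in every term.

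Next I would apply this normal-ordered expression to $\zeta$. Because $\zeta$ is essential, $c_k\zeta=0$ for all $k$ by Definition \ref{sec:-Essential-Paths}, so every term with $q\geq 1$ trailing annihilation operators kills $\zeta$ and drops out. The surviving terms have $q=0$ and hence, by the invariant, exactly $p=l'-l\geq 1$ creation operators, so $c_{i_l}\cdots c_{i_1}c_{j_1}^{\dagger}\cdots c_{j_{l'}}^{\dagger}\zeta$ is a linear combination of vectors of the form $c_{k_1}^{\dagger}\cdots c_{k_{l'-l}}^{\dagger}\zeta$ with $l'-l\geq 1$. Pairing such a vector with the essential path $\xi$ and peeling off the leftmost creation operator by adjointness gives $\langle\xi,c_{k_1}^{\dagger}(\cdots)\rangle=\langle c_{k_1}\xi,(\cdots)\rangle=0$, once more because $\xi$ is essential. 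Hence $\langle\eta,\eta'\rangle=0$.

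I expect the only genuine obstacle to lie in the bookkeeping of the normal-ordering step: one must confirm that the recursion really terminates and that shifted indices such as $i-2$ or $j-2$ never escape the range in which the relations of Proposition \ref{sec:alge} are valid, the $\theta$-functions in (\ref{eq:cc1}) being exactly what police this. Everything else is a direct consequence of adjointness together with the defining property $c_k\xi=0$ of essential paths; in particular the base case $l=0$ is immediate, since an essential path is orthogonal to any vector carrying at least one creation operator, and the argument above is essentially a reduction of the general case to this one.
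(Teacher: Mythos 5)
Your proposal is correct and takes essentially the same route as the paper: the paper likewise reduces the claim to showing that the matrix elements $M_{lm}$ between generators of $P_n^{(l)}$ and $P_n^{(m)}$ are proportional to $\delta_{lm}$, and invokes the contraction/normal-ordering argument of Proposition \ref{me} based on the relations (\ref{eq:cc1}) of Proposition \ref{sec:alge}. Your invariant $p-q=l'-l$ and the termination bookkeeping merely make explicit what the paper leaves implicit in the remark that all annihilation indices must be contracted with creation indices, any leftover $c$ or $c^{\dagger}$ killing the inner product against an essential path.
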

\begin{proof}
This proposition is proved if we show that,\[
M_{lm}=(c_{i_{1}}^{\dagger}c_{i_{2}}^{\dagger}\cdots c_{i_{l}}^{\dagger}(\xi^{(n-2l)}),c_{j_{1}}^{\dagger}c_{j_{2}}^{\dagger}\cdots c_{j_{m}}^{\dagger}(\xi^{(n-2m)}))\propto\delta_{lm}\;\;,\forall\xi^{(n-2l)}\in\mathcal{E}_{n-2l}\;,\xi^{(n-2m)}\in\mathcal{E}_{n-2m}\]
this in turn follows%
\footnote{See the proof of proposition \ref{me} for a similar argument.%
} from the relations in proposition \ref{sec:alge}.
\end{proof}
Thus there exist orthogonal projections on each of the subspaces $P^{(l)}\;\;,l=0,\cdots,[n/2]$
that we denote by $\Pi_{n}^{(l)}$ and satisfy $\Pi_{n}^{(l)}=\Pi_{n}^{(l)}{}^{2}=\Pi_{n}^{(l)}{}^{\dagger}$.
In particular $\Pi_{n}^{(0)}$ is a orthogonal projector over essential
paths of length $n$. 
\begin{example}
\emph{Decomposition of non-essential paths in $A_{3}$. }Using the
algorithm of the previous theorem the following decompositions of
non-essential paths of a given length coming from the concatenation
of essential paths are obtained,

Length two:

\begin{eqnarray*}
(01)\star(10)=(010) & = & \frac{1}{2^{1/4}}c_{0}^{\dagger}(0)\\
(21)\star(12)=(212) & = & \frac{1}{2^{1/4}}c_{0}^{\dagger}(2)\\
(10)\star(01)=(101) & = & \frac{1}{\sqrt{2}}(\frac{1}{2^{1/4}}c_{0}^{\dagger}(1)-\gamma)\\
(12)\star(21)=(121) & = & \frac{1}{\sqrt{2}}(\frac{1}{2^{1/4}}c_{0}^{\dagger}(1)+\gamma)\end{eqnarray*}
Length three%
\footnote{It is recalled that $\gamma=\frac{1}{\sqrt{2}}[(121)-(101)]$ as defined
in example \ref{exa:Essentials-paths-for}.%
}:\begin{eqnarray*}
(01)\star\gamma & = & (\frac{1}{2^{1/4}}c_{1}^{\dagger}-2^{1/4}c_{0}^{\dagger})(01)\\
(21)\star\gamma & = & -(\frac{1}{2^{1/4}}c_{1}^{\dagger}-2^{1/4}c_{0}^{\dagger})(21)\\
\gamma\star(10) & = & (\frac{1}{2^{1/4}}c_{0}^{\dagger}-2^{1/4}c_{1}^{\dagger})(10)\\
\gamma\star(12) & = & -(\frac{1}{2^{1/4}}c_{0}^{\dagger}-2^{1/4}c_{1}^{\dagger})(12)\\
(10)\star(012) & = & (2^{1/4}c_{0}^{\dagger}-\frac{1}{2^{1/4}}c_{1}^{\dagger})(12)\\
(012)\star(21) & = & (2^{1/4}c_{1}^{\dagger}-\frac{1}{2^{1/4}}c_{0}^{\dagger})(01)\\
(12)\star(210) & = & (2^{1/4}c_{0}^{\dagger}-\frac{1}{2^{1/4}}c_{1}^{\dagger})(10)\\
(210)\star(01) & = & (2^{1/4}c_{1}^{\dagger}-\frac{1}{2^{1/4}}c_{0}^{\dagger})(21)\end{eqnarray*}
Length four:

\begin{eqnarray}
(012)\star(210) & = & (c_{1}^{\dagger}-\frac{1}{\sqrt{2}}c_{2}^{\dagger})\, c_{0}^{\dagger}((0))\label{eq:de1}\\
(210)\star(012) & = & (c_{1}^{\dagger}-\frac{1}{\sqrt{2}}c_{2}^{\dagger})\, c_{0}^{\dagger}((2))\label{eq:de2}\\
\gamma\star\gamma & = & (c_{1}^{\dagger}-\frac{1}{\sqrt{2}}c_{2}^{\dagger})c_{0}^{\dagger}((1))\label{eq:de3}\end{eqnarray}

\end{example}
\emph{$\;$}
\begin{example}
\emph{\label{exa:Deca2}Decomposition of non-essential paths in $A_{[2]}$.
}Using the algorithm of the previous theorem the following decompositions
of non-essential paths of a given length coming from the concatenation
of essential paths are obtained,

Length two:

\begin{eqnarray*}
(01)\star(10)=\frac{1}{2}c_{0}^{\dagger}((0))+\frac{1}{2}\xi_{010}, & \;(02)\star(20)=\frac{1}{2}c_{0}^{\dagger}((0))-\frac{1}{2}\xi_{020}, & \;\;\xi_{010}=\xi_{020}=(010)-(020)\in\mathcal{E}\\
(12)\star(21)=\frac{1}{2}c_{0}^{\dagger}((1))+\frac{1}{2}\xi_{121}, & \;(10)\star(01)=\frac{1}{2}c_{0}^{\dagger}((1))-\frac{1}{2}\xi_{101}, & \;\;\xi_{121}=\xi_{101}=(121)-(101)\in\mathcal{E}\\
(20)\star(02)=\frac{1}{2}c_{0}^{\dagger}((2))+\frac{1}{2}\xi_{202}, & \;(21)\star(12)=\frac{1}{2}c_{0}^{\dagger}((2))-\frac{1}{2}\xi_{212}, & \;\;\xi_{202}=\xi_{212}=(202)-(212)\in\mathcal{E}\end{eqnarray*}
it is worth noting that the last two lines above can be obtained from
the first one by making cyclic permutations of the vertices $0,1$
and $2$(not for the indices of the $c^{\dagger}$ operators), i.e.
by applying the rotations contained in the symmetry group $C_{3v}$
of the graph $A_{[2]}$.

Length three:\begin{eqnarray*}
(10)\star(012) & =(1012)= & (\frac{2}{3}c_{0}^{\dagger}-\frac{1}{3}c_{1}^{\dagger})(12)+\xi_{1012}\\
(012)\star(21) & =(0121)= & (\frac{2}{3}c_{1}^{\dagger}-\frac{1}{3}c_{0}^{\dagger})(01)+\xi_{0121}\\
(10)\star\xi_{010} & =(1010)-(1020) & =(\frac{2}{3}c_{0}^{\dagger}-\frac{1}{3}c_{1}^{\dagger})(10)+\xi_{(10)\star\xi_{010}}\\
\xi_{010}\star(01) & =(0101)-(0201) & =(\frac{2}{3}c_{1}^{\dagger}-\frac{1}{3}c_{0}^{\dagger})(01)+\xi_{\xi_{010}\star(01)}\end{eqnarray*}
where,\begin{eqnarray*}
\xi_{1012} & = & \frac{1}{3}[(1012)-(1212)+(1202)]\;\in\mathcal{E}\\
\xi_{0121} & = & \frac{1}{3}[(0121)-(0101)+(0201)]\;\in\mathcal{E}\\
\xi_{(10)\star\xi_{010}} & = & \frac{2}{3}[(1010)-(1020)-(1210)]\;\in\mathcal{E}\\
\xi_{\xi_{010}\star(01)} & = & \frac{2}{3}[(0101)-(0201)-(0121)]\;\in\mathcal{E}\end{eqnarray*}
from these four decompositions and applying the elements of the symmetry
group $C_{3v}$ of the graph $A_{[2]}$ the other twenty decompositions
can be readily obtained.

Length four:

\begin{equation}
(01210)=(\frac{2}{3}\, c_{1}^{\dagger}-\frac{1}{3}c_{2}^{\dagger})\,[\frac{1}{2}c_{0}^{\dagger}((0))+\xi_{01210}^{(2)}]-(\frac{1}{2}c_{0}^{\dagger}-\frac{1}{3}c_{1}^{\dagger}+\frac{1}{6}c_{2}^{\dagger})\xi_{01210}^{(2)}+\xi_{01210}^{(0)}\label{eq:01210}\end{equation}

where $\xi^{(0)},\xi^{(2)}\in\mathcal{E}$ are given by,\begin{eqnarray*}
\xi_{01210}^{(0)} & = & \frac{1}{6}[(01210)+(02120)+(01020)-(02020)-(01010)+(02010)]\\
\xi_{01210}^{(2)} & = & \frac{1}{2}[(010)-(020)]\end{eqnarray*}
also,\begin{eqnarray*}
\xi_{010}\star(012) & = & [c_{1}^{\dagger}-\frac{1}{2}(c_{2}^{\dagger}+c_{0}^{\dagger})](012)+\xi_{\xi_{010}\star(012)}\\
(210)\star\xi_{010} & = & [c_{1}^{\dagger}-\frac{1}{2}(c_{2}^{\dagger}+c_{0}^{\dagger})](210)+\xi_{(210)\star\xi_{010}}\end{eqnarray*}
where $\xi_{\xi_{010}\star(012)},\xi_{(210)\star\xi_{010}}\in\mathcal{E}$
are given by,\begin{eqnarray*}
\xi_{\xi_{010}\star(012)} & = & \frac{1}{2}[(01012)-(02012)-(01212)+(01202)]\\
\xi_{(210)\star\xi_{010}} & = & \frac{1}{2}[(21010)-(21020)-(21210)+(20210)]\end{eqnarray*}
applying the elements of $C_{3v}$ to these decompositions the others
can be readily obtained. Finally,\[
\xi_{121}\star\xi_{121}=\frac{2}{3}c_{1}^{\dagger}c_{1}^{\dagger}(1)-\frac{1}{3}c_{2}^{\dagger}c_{1}^{\dagger}(1)+\xi_{\xi_{121}\star\xi_{121}}\]
where $\xi_{\xi_{121}\star\xi_{121}}\in\mathcal{E}$ is given by,\[
\xi_{\xi_{121}\star\xi_{121}}=\frac{2}{3}[(12121)+(10101)-(10121)-(12101)-(12021)-(10201)]\]

\end{example}

\section{The projection}

A posteriori motivation for the definition of the projection $P:End^{gr}(\mathcal{P})\to End^{gr}(\mathcal{E})$
appearing bellow is given by its properties with respect to the concatenation
product of paths (see propositions \ref{star}, \ref{asosp}, \ref{copp},
\ref{2copp}, \ref{coup}, \ref{antipp}). However it was proposed
based on its relation with the representation theory of these weak
{*}-Hopf algebras, representation theory that is not considered in
this paper. Before giving this definition a useful result is given,

\begin{prop}\label{me}\begin{equation}
(c_{i_{1}}c_{i_{2}}\cdots c_{i_{n}}c_{j_{n}}^{\dagger}c_{j_{n-1}}^{\dagger}\cdots c_{j_{1}}^{\dagger}\xi_{a},\xi_{b})=\delta_{ab}C(i_{1},\cdots;i_{n};j_{n},\cdots;j_{1})\label{eq:C}\end{equation}
where $\xi_{a},\xi_{b}$ are elements of an orthonormal basis of $\mathcal{E}$
such that $\#\xi_{a}=\#\xi_{b}\geq j_{1}$ and $\#\xi_{a}=\#\xi_{b}\geq i_{1}$.

\end{prop}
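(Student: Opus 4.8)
The plan is to show that the vector
$\eta:=c_{i_{1}}c_{i_{2}}\cdots c_{i_{n}}c_{j_{n}}^{\dagger}c_{j_{n-1}}^{\dagger}\cdots c_{j_{1}}^{\dagger}\xi_{a}$
is itself a scalar multiple of $\xi_{a}$, with the scalar depending only on the indices and not on the basis element chosen; the identity (\ref{eq:C}) then follows at once by pairing with $\xi_{b}$ and using $(\xi_{a},\xi_{b})=\delta_{ab}$. First I would record that $\eta\in\mathcal{P}_{\#\xi_{a}}$, since the $n$ creation operators raise the length by $2n$ and the $n$ annihilation operators lower it by $2n$; this is precisely why the hypothesis $\#\xi_{a}=\#\xi_{b}$ is the relevant nondegeneracy condition.

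The core of the argument is a normal-ordering computation. I would repeatedly move each annihilation operator rightward through the block of creation operators by means of the relation (\ref{eq:cc1}) of Proposition \ref{sec:alge}, reordering the creation operators among themselves via Proposition \ref{order} whenever it is convenient. Each elementary move $c_{i}c_{j}^{\dagger}$ splits into a scalar term (present only when $|i-j|\leq 1$), which consumes one annihilation and one creation operator at once, together with two pass-through terms $c_{j-2}^{\dagger}c_{i}$ and $c_{j}^{\dagger}c_{i-2}$, in which the annihilation operator survives and migrates one place to the right with a shifted index. Iterating this until no annihilation operator stands to the left of any creation operator rewrites $\eta$ as a finite sum of terms, each of the shape $(\text{scalar})\times(\text{product of }c^{\dagger})\times(\text{product of }c)\,\xi_{a}$.

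Now the essentiality of $\xi_{a}$ enters: since $\xi_{a}\in\mathcal{E}$ one has $c_{k}\xi_{a}=0$ for every $k$ (by the defining relation of $\mathcal{E}$ when $k\leq\#\xi_{a}-2$, and by the definition of the operator for larger $k$), so every term that still carries an annihilation operator next to $\xi_{a}$ vanishes. Because each elementary move preserves the difference between the number of creation and annihilation operators, a term survives exactly when all $n$ annihilation operators have been absorbed into scalars; each such absorption is paired with the removal of one creation operator, so a surviving term has all $n$ creation operators removed as well and reduces to $(\text{scalar})\times\xi_{a}$. Summing, $\eta=C\,\xi_{a}$, where the scalar $C(i_{1},\cdots;i_{n};j_{n},\cdots;j_{1})$ is assembled solely from the coefficients in (\ref{eq:cc1}) and is therefore independent of $a$; this yields (\ref{eq:C}).

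The point requiring the most care is the bookkeeping of admissible index ranges: one must verify that the hypotheses $\#\xi_{a}=\#\xi_{b}\geq j_{1}$ and $\#\xi_{a}=\#\xi_{b}\geq i_{1}$ keep every operator acting on a space where the relations of Propositions \ref{order} and \ref{sec:alge} are valid at each stage, so that no boundary term is dropped during the reduction. Granting this routine check, the computation above is the whole content, and it is the same argument alluded to in the proof of Proposition \ref{ortdecomp}.
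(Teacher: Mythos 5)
Your proposal is correct and is essentially the paper's own argument: the paper likewise normal-orders $c_{i_{n}}\cdots c_{j_{1}}^{\dagger}$ via relation (\ref{eq:cc1}), calls the scalar replacements ($\beta$ or $1$) contractions, and observes that any uncontracted operator kills the matrix element because $\xi_{a},\xi_{b}$ are essential, leaving a scalar $C$ built only from the index-dependent coefficients. Your only (harmless) refinement is the counting observation that leftover creators always come paired with leftover annihilators, so you never need to move a surviving $c^{\dagger}$ onto $\xi_{b}$, whereas the paper invokes essentiality of both $\xi_{a}$ and $\xi_{b}$.
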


\begin{proof}The evaluation of the matrix element in the l.h.s. is
considered. By means of relation (\ref{eq:cc1}) the product of operators
$c_{i_{n}}c_{j_{n}}^{\dagger}$ is either replaced by a number $\beta$
or $1$(which we call a contraction), or they are interchanged with
a change in the index of one of them. In any case for the matrix element
to be non-vanishing, all the $i$ indices should be contracted with
$j$ indices. If this is not the case the matrix element vanishes
because necessarily a $c$ operator will be applied to $\xi_{a}$
or $\xi_{b}$ which gives zero because they are essential. 

\end{proof}

Let $End(\mathcal{P}_{n})$ ($End(\mathcal{E}_{n})$)denote the vector
space of endomorphism of length $n$ paths(essential paths)%
\footnote{In what follows the following equalities $End(\mathcal{P}_{n})=\mathcal{P}_{n}\otimes\mathcal{P}_{n}$
and $End(\mathcal{E}_{n})=\mathcal{E}_{n}\otimes\mathcal{E}_{n}$
will be employed. This is so because by means of the scalar product
appearing in definition \ref{-Space-of paths} and section \ref{sec:-Essential-Paths}
it is possible to identify $\mathcal{P}_{n}$ and $\mathcal{E}_{n}$
with its duals.%
}. In what follows the vector space of length preserving endomorphism
of paths(essential paths) will be considered. They are defined by,\[
End^{gr}(\mathcal{P})=\bigoplus_{n}End(\mathcal{P}_{n})\;\;,End^{gr}(\mathcal{E})=\bigoplus_{n}End(\mathcal{E}_{n})\]

\begin{defn}\label{A-projector-}A projector%
\footnote{In ref.\cite{coqgar} another projector $Q$ acting on the same vector
space is considered. Defining a product as in (\ref{eq:prod}) but
using $Q$ does not lead to a weak Hopf algebra structure.%
} $P:End^{gr}(\mathcal{P})\to End^{gr}(\mathcal{E})$ is defined by
its action on the terms appearing in the decomposition (\ref{eq:decomp-2}),\begin{eqnarray}
P(c_{j_{n}}^{\dagger}\cdots c_{j_{1}}^{\dagger}\xi_{a}\otimes c_{i_{n}}^{\dagger}\cdots c_{i_{1}}^{\dagger}\xi_{b}) & = & \sum_{\xi_{c}\in\mathcal{E}}\;(c_{i_{1}}c_{i_{2}}\cdots c_{i_{n}}c_{j_{n}}^{\dagger}c_{j_{n-1}}^{\dagger}\cdots c_{j_{1}}^{\dagger}\xi_{a},\xi_{c})\;\xi_{c}\otimes\xi_{b}\label{eq:proj}\nonumber\\
 & = & C(i_{1},\cdots,i_{n};j_{n},\cdots,j_{1})\;\xi_{a}\otimes\xi_{b}\end{eqnarray}
where $j_{1}<j_{2}<\cdots<j_{n}$ and $i_{1}<i_{2}<\cdots<i_{n}$
. \end{defn}It is clear that $P^{2}=P$ but $P^{\dagger}\neq P$
which implies that $P$ is not an orthogonal projection.

\begin{rem}It should be noted that the projection of an arbitrary
element $\eta\otimes\eta'$ of $End(\mathcal{P}_{n})$ is obtained
by applying definition \ref{A-projector-} to each term appearing
in the decomposition of $\eta\otimes\eta'$ as in eq. (\ref{eq:decomp-2}).
Thus in general this projection consists in a summation of elements
belonging to,\[
\bigoplus_{l=0}^{[n/2]}End(\mathcal{E}_{n-2l})\]
 Therefore it will not in general respect the grading.

\end{rem}

\begin{rem}Note that because of eq. (\ref{eq:C}) and the orthonormality
of the basis $\{\xi_{a}\}$, the following equality holds,\begin{eqnarray*}
P(c_{j_{n}}^{\dagger}c_{j_{n-1}}^{\dagger}\cdots c_{j_{1}}^{\dagger}\xi_{a}\otimes c_{i_{n}}^{\dagger}c_{i_{n-1}}^{\dagger}\cdots c_{i_{1}}^{\dagger}\xi_{b})=\\
=\sum_{\xi_{c}\in\mathcal{E}}\;\xi_{a}\otimes\xi_{c}\;(\xi_{c},c_{j_{1}}c_{j_{2}}\cdots c_{j_{n}}c_{i_{n}}^{\dagger}c_{i_{n-1}}^{\dagger}\cdots c_{i_{1}}^{\dagger}\xi_{b})\end{eqnarray*}

\end{rem}
\begin{example}
As an example the projections of the element $(01210)\otimes(21012)$
both for the graph $A_{3}$ and $A_{\{2\}}$ are calculated. For $A_{3}$,\begin{eqnarray*}
P((01210)\otimes(21012))_{A_{3}} & = & \sum_{v}((c_{1}^{\dagger}-\frac{1}{\sqrt{2}}c_{2}^{\dagger})\, c_{0}^{\dagger}((0)),(c_{1}^{\dagger}-\frac{1}{\sqrt{2}}c_{2}^{\dagger})\, c_{0}^{\dagger}((v)))\; v\otimes(2)\\
 & = & \sum_{v}(c_{0}(c_{1}-\frac{1}{\sqrt{2}}c_{2})\,(c_{1}^{\dagger}-\frac{1}{\sqrt{2}}c_{2}^{\dagger})\, c_{0}^{\dagger}((0)),v)\; v\otimes(2)\\
 & = & \sum_{v}\sqrt{2}(\sqrt{2}-\frac{1}{\sqrt{2}}-\frac{1}{\sqrt{2}}+\frac{\sqrt{2}}{2})\,\delta_{v,0}\; v\otimes(2)=0\otimes2\end{eqnarray*}
where the decompositions (\ref{eq:de1}) and (\ref{eq:de2}) were
employed for the first equality and proposition \ref{sec:alge} for
the last. For $A_{[2]}$, the decomposition of eq.(\ref{eq:01210})
and the following are employed,\begin{eqnarray*}
(21012) & = & (\frac{2}{3}\, c_{1}^{\dagger}-\frac{1}{3}c_{2}^{\dagger})\,[\frac{1}{2}c_{0}^{\dagger}((2))+\xi_{21012}^{(2)}]-(\frac{1}{2}c_{0}^{\dagger}-\frac{1}{3}c_{1}^{\dagger}+\frac{1}{6}c_{2}^{\dagger})\xi_{21012}^{(2)}+\xi_{21012}^{(0)}\end{eqnarray*}
where,\begin{eqnarray*}
\xi_{21012}^{(2)} & = & \frac{1}{2}[(212)-(202)]\\
\xi_{21012}^{(1)} & =\frac{1}{6}[ & (21012)+(20102)+(21202)-(20202)-(21212)+(20212)]\end{eqnarray*}
recalling that the projection kills terms with unequal number of $c^{\dagger}$
operators applied to essential paths in each factor of the tensor
product leads to, \begin{eqnarray*}
P((01210)\otimes(21012))_{A_{[2]}} & = & \sum_{\rho\in\mathcal{E}}(\xi_{01210}^{(1)},\rho)\,\rho\otimes\xi_{21012}^{(1)}\\
 &  & +\sum_{\rho\in\mathcal{E}}(([c_{1}^{\dagger}-\frac{1}{2}(c_{0}^{\dagger}+c_{2}^{\dagger})]\xi_{01210}^{(2)},[c_{1}^{\dagger}-\frac{1}{2}(c_{0}^{\dagger}+c_{2}^{\dagger})]\rho)\;\rho\otimes\xi_{21012}^{(2)}\\
 &  & +\sum_{v\in\mathcal{E}_{0}}((\frac{2}{3}\, c_{1}^{\dagger}-\frac{1}{3}c_{2}^{\dagger})\,\frac{1}{2}c_{0}^{\dagger}(0),(\frac{2}{3}\, c_{1}^{\dagger}-\frac{1}{3}c_{2}^{\dagger})\,\frac{1}{2}c_{0}^{\dagger}(v))\, v\otimes(2)\end{eqnarray*}
evaluating the scalar products gives,\[
P((01210)\otimes(21012))_{A_{[2]}}=\xi_{01210}^{(1)}\otimes\xi_{21012}^{(1)}+\xi_{01210}^{(2)}\otimes\xi_{21012}^{(2)}+\frac{1}{3}\,(0)\otimes(2)\]

\end{example}

\section{Star algebra}

In the vector space $End^{gr}(\mathcal{P})$ the following involution
is considered,

\begin{defn}\emph{Star.}

\begin{equation}
(\xi\otimes\xi')^{\star}=\xi^{\star}\otimes\xi'^{\star}\label{eq:star}\end{equation}
where $\xi^{\star}$ denotes the path obtained from $\xi$ by \textquotedbl{}time
inversion\textquotedbl{} for elementary paths and extending antilinearly
to all $\mathcal{P}$, i.e., by reversing the sense in which the succession
of contiguous vertices is followed for elementary paths , i.e.,\[
\xi=(v_{o},v_{1},\cdots,v_{n-1},v_{n})\Rightarrow\xi^{\star}=(v_{n},v_{n-1},\cdots,v_{1},v_{0})\]
\end{defn}

\noindent from this definition and the one of the scalar product in
section \ref{sec:Paths}, it is clear that,\begin{equation}
(\eta,\chi)=\overline{(\eta^{\star},\chi^{\star})}\label{eq:prodstar}\end{equation}
where the bar indicates the complex conjugate. The underlying vector
space of the algebra to be considered is given by the length graded
endomorphisms of essential paths%
\footnote{\textit{\emph{This choice of the underlying vector space structure
doe not mean that the product to be considered is the composition
of endomorphisms in $End^{gr}(\mathcal{E})$. It is emphasized that
this is }}\textit{not}\textit{\emph{ the product to be considered
but another product that we call $\cdot$ and that will be defined
bellow.}}%
}$End^{gr}(\mathcal{E})$. The product is defined by,

\begin{defn}\emph{Product.}

\begin{equation}
(\xi\otimes\xi')\cdot(\rho\otimes\rho')=P(\xi\star\rho\otimes\xi'\star\rho')\;\;\;;\xi,\rho,\xi',\rho'\in\mathcal{E}\label{eq:prod}\end{equation}
\end{defn}

\noindent This product does not make this algebra a graded one. This
is a filtered algebra respect to the length of paths. The product
of $\xi\otimes\xi'\in End(\mathcal{E}_{n_{1}})$ with $\rho\otimes\rho'\in End(\mathcal{E}_{n_{2}})$in
general belongs to,\[
\bigoplus_{l=0}^{[(n_{1}+n_{2})/2]}End(\mathcal{E}_{n_{1}+n_{2}-2l})\]

\noindent The identity is,\[
\mathbf{1}=\sum_{v,v'\in\mathcal{E}_{0}}\, v\otimes v'\]
the properties to be fulfilled by these definitions are fairly simple
to be proved except for the antihomomorphism property of the involution
(\ref{eq:star}) and the associativity of the product (\ref{eq:prod}).
The following result will be employed in the proof of the first of
these properties,

\begin{prop}\label{star}\begin{equation}
P((\eta\otimes\eta')^{\star})=P(\eta\otimes\eta')^{\star}\;\;\forall\eta,\eta'\in\mathcal{P}\label{eq:star-1}\end{equation}

\end{prop}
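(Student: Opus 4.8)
The plan is to exploit the interaction of the time-reversal involution with the creation/annihilation operators and to collapse everything to a single scalar identity via antiunitarity. First I would introduce the reversal operator $R:\mathcal{P}\to\mathcal{P}$, $R\eta=\eta^{\star}$, and record three facts. By (\ref{eq:prodstar}) the map $R$ is antiunitary, $(R\eta,R\chi)=\overline{(\eta,\chi)}$, and $R^{2}=\mathrm{id}$ since reversing an elementary path twice restores it. A direct computation on elementary paths from Definition \ref{def:Creation-and-annhilation} then yields the reversal relations on $\mathcal{P}_{n}$,
\[
Rc_{i}=c_{n-2-i}R,\qquad Rc_{i}^{\dagger}=c_{n-i}^{\dagger}R ,
\]
which hold because reversal sends the site $i$ to the mirror site $n-i$, the defining $\delta$-conditions are symmetric under reversal, and the $\sqrt{\mu_{\cdot}/\mu_{\cdot}}$ coefficients are real and unchanged (the surviving terms force the relevant vertices, hence the relevant $\mu$'s, to coincide). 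In particular $R$ conjugates each $c^{\dagger}$ to a single $c^{\dagger}$, and since $\xi\in\mathcal{E}_{n}\Leftrightarrow c_{j}\xi=0\;\forall j$, the first relation shows $R(\mathcal{E}_{n})=\mathcal{E}_{n}$.

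Next I would reduce (\ref{eq:star-1}) to the spanning terms of the decomposition (\ref{eq:decomp-2}). Since $P$ is linear and $\star$ is antilinear, both $P\circ\star$ and $\star\circ P$ are antilinear, so it suffices to check the identity on a generic term $T=c_{j_{p}}^{\dagger}\cdots c_{j_{1}}^{\dagger}\xi_{a}\otimes c_{i_{q}}^{\dagger}\cdots c_{i_{1}}^{\dagger}\xi_{b}$ with $\xi_{a},\xi_{b}\in\mathcal{E}$. If the two factors carry different numbers of creation operators ($p\neq q$) then $P(T)=0$; moreover the reversal relations show that $T^{\star}$ again has $p$ and $q$ creation operators on its two factors, so $P(T^{\star})=0$ as well, and both sides of (\ref{eq:star-1}) vanish.

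The substantive case is $p=q$. Here Definition \ref{A-projector-} together with Proposition \ref{me} gives $P(T)=(\Phi\xi_{a},\xi_{a})\,\xi_{a}\otimes\xi_{b}$, where $\Phi=c_{i_{1}}\cdots c_{i_{p}}c_{j_{p}}^{\dagger}\cdots c_{j_{1}}^{\dagger}$, so that $P(T)^{\star}=\overline{(\Phi\xi_{a},\xi_{a})}\,\xi_{a}^{\star}\otimes\xi_{b}^{\star}$. Pushing $R$ through the creation operators via the relations of the first paragraph exhibits $T^{\star}$ as a single decomposition term with $p$ creation operators on each factor and with $\xi_{a},\xi_{b}$ replaced by $\xi_{a}^{\star},\xi_{b}^{\star}\in\mathcal{E}$; applying $P$ yields $P(T^{\star})=(\Psi\,\xi_{a}^{\star},\xi_{a}^{\star})\,\xi_{a}^{\star}\otimes\xi_{b}^{\star}$ for the annihilation--creation operator $\Psi$ prescribed by Proposition \ref{me}. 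The key structural point is $\Psi=R\Phi R$: writing the creation parts of the two factors of $T^{\star}$ as $R(\cdots)R$ acting on $\xi_{a}^{\star},\xi_{b}^{\star}$, and using that an antiunitary $R$ with $R^{2}=\mathrm{id}$ satisfies $(RAR)^{\dagger}=RA^{\dagger}R$, the operator defining $P(T^{\star})$ collapses exactly to $R\Phi R$, \emph{without} any index computation. Finally, antiunitarity gives $((R\Phi R)\xi_{a}^{\star},\xi_{a}^{\star})=\overline{(\Phi\xi_{a},\xi_{a})}$, whence $P(T^{\star})=P(T)^{\star}$, as desired.

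The main obstacle is the first step: pinning down the exact shifts $n-2-i$ and $n-i$ and checking compatibility with the strictly increasing ordering assumed in Definition \ref{A-projector-}. Because the shifted indices need not be increasing, one must invoke Proposition \ref{order} (and its dual for the $c_{i}$'s) to rewrite $T^{\star}$ in canonical form before applying $P$, and then verify that the coefficient is insensitive to this reordering — which holds since the reordering is an operator identity and the annihilation string in the matrix element is precisely the adjoint of the right-hand creation string. Once this bookkeeping is settled, the antiunitarity identity $(RAR)^{\dagger}=RA^{\dagger}R$ does the real work and makes the coefficient matching automatic.
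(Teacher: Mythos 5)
Your proposal is correct and takes essentially the same route as the paper: the paper's key identity $(c_{i_n}^{\dagger}\cdots c_{i_1}^{\dagger}\xi_{b})^{\star}=c_{l+2(n-1)-i_n}^{\dagger}\cdots c_{l-i_1}^{\dagger}(\xi_{b}^{\star})$ is precisely the iterated form of your relation $Rc_{i}^{\dagger}=c_{n-i}^{\dagger}R$, and your final coefficient matching via antiunitarity of $R$ is the paper's invocation of eq. (\ref{eq:prodstar}) together with the fact that $\xi\mapsto\xi^{\star}$ permutes $\mathcal{E}$. Your explicit attention to restoring the strictly increasing index ordering via Proposition \ref{order} addresses a bookkeeping point the paper's proof passes over silently, but it is a refinement of the same argument rather than a different approach.
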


\begin{proof}

Typical contributions to the decomposition (\ref{eq:decomp-2}) for
$\eta$ and $\eta'$ are considered. These contributions should have
the same number of $c^{\dagger}$ operators applied to essential paths
in order to have a non-vanishing image when applying the projector.
Therefore the following expression is considered,\begin{eqnarray*}
P(c_{j_{n}}^{\dagger}c_{j_{n-1}}^{\dagger}\cdots c_{j_{1}}^{\dagger}\xi_{a}\otimes c_{i_{n}}^{\dagger}c_{i_{n-1}}^{\dagger}\cdots c_{i_{1}}^{\dagger}\xi_{b}) & = & \sum_{\xi_{c}\in\mathcal{E}}\;(c_{i_{1}}c_{i_{2}}\cdots c_{i_{n}}c_{j_{n}}^{\dagger}c_{j_{n-1}}^{\dagger}\cdots c_{j_{1}}^{\dagger}\xi_{a},\xi_{c})\;\xi_{c}\otimes\xi_{b}\\
 & = & \sum_{\xi_{c}\in\mathcal{E}}\;(c_{j_{n}}^{\dagger}c_{j_{n-1}}^{\dagger}\cdots c_{j_{1}}^{\dagger}\xi_{a},c_{i_{n}}^{\dagger}c_{i_{n-1}}^{\dagger}\cdots c_{i_{1}}^{\dagger}\xi_{c})\;\xi_{c}\otimes\xi_{b}\end{eqnarray*}
thus, \[
P(c_{j_{n}}^{\dagger}c_{j_{n-1}}^{\dagger}\cdots c_{j_{1}}^{\dagger}\xi_{a}\otimes c_{i_{n}}^{\dagger}c_{i_{n-1}}^{\dagger}\cdots c_{i_{1}}^{\dagger}\xi_{b})^{\star}=\sum_{\xi_{c}\in\mathcal{E}}\;\overline{(c_{j_{n}}^{\dagger}c_{j_{n-1}}^{\dagger}\cdots c_{j_{1}}^{\dagger}\xi_{a},c_{i_{n}}^{\dagger}c_{i_{n-1}}^{\dagger}\cdots c_{i_{1}}^{\dagger}\xi_{c})}\;\xi_{c}^{\star}\otimes\xi_{b}^{\star}\]
the time inversion of a path of the form $c_{i_{n}}^{\dagger}c_{i_{n-1}}^{\dagger}\cdots c_{i_{1}}^{\dagger}\xi_{b}$
leads to(the counting starts from the end of this path),\[
(c_{i_{n}}^{\dagger}\cdots c_{i_{2}}^{\dagger}c_{i_{1}}^{\dagger}\xi_{b})^{\star}=c_{l+2(n-1)-i_{n}}^{\dagger}c_{l+2(n-2)-i_{n-1}}^{\dagger}\cdots c_{l+2-i_{2}}^{\dagger}c_{l-i_{1}}^{\dagger}(\xi_{b}^{\star})\]
where $l=\#\xi_{b}$. Using this relation leads to,\begin{eqnarray*}
 & P((c_{j_{n}}^{\dagger}c_{j_{n-1}}^{\dagger}\cdots c_{j_{1}}^{\dagger}\xi_{a})^{\star}\otimes(c_{i_{n}}^{\dagger}c_{i_{n-1}}^{\dagger}\cdots c_{i_{1}}^{\dagger}\xi_{b})^{\star})=\\
= & \sum_{\xi_{d}^{\star}\in\mathcal{E}}((c_{j_{n}}^{\dagger}c_{j_{n-1}}^{\dagger}\cdots c_{j_{1}}^{\dagger}\xi_{a})^{\star},c_{l+2(n-1)-i_{n}}^{\dagger}\cdots c_{l+2-i_{2}}^{\dagger}c_{l-i_{1}}^{\dagger}\xi_{d}^{\star})\,\xi_{d}^{\star}\otimes\xi_{b}^{\star}\\
= & \sum_{\xi_{d}^{\star}\in\mathcal{E}}((c_{j_{n}}^{\dagger}c_{j_{n-1}}^{\dagger}\cdots c_{j_{1}}^{\dagger}\xi_{a})^{\star},(c_{i_{n}}^{\dagger}c_{i_{n-1}}^{\dagger}\cdots c_{i_{1}}^{\dagger}\xi_{d})^{\star})\,\xi_{d}^{\star}\otimes\xi_{b}^{\star}\\
= & \sum_{\xi_{d}\in\mathcal{E}}\overline{(c_{j_{n}}^{\dagger}c_{j_{n-1}}^{\dagger}\cdots c_{j_{1}}^{\dagger}\xi_{a},c_{i_{n}}^{\dagger}c_{i_{n-1}}^{\dagger}\cdots c_{i_{1}}^{\dagger}\xi_{d})}\,\xi_{d}^{\star}\otimes\xi_{b}^{\star}\end{eqnarray*}
where in the last equality eq. (\ref{eq:prodstar}) was employed and
the fact that when $\xi_{d}$ runs over all $\mathcal{E}$ then $\xi_{d}^{\star}$
also.

\end{proof}

Using (\ref{eq:star-1}) it follows that,

\begin{prop}\begin{equation}
((\xi\otimes\xi')\cdot(\rho\otimes\rho'))^{\star}=(\rho\otimes\rho')^{\star}\cdot(\xi\otimes\xi')^{\star}\;\;\forall\xi,\xi',\rho,\rho'\in\mathcal{E}\label{eq:star-2}\end{equation}

\end{prop}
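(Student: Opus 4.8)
The plan is to reduce the claim to Proposition \ref{star} together with the elementary fact that time inversion reverses concatenation. First I would unfold the left-hand side using the definition (\ref{eq:prod}) of the product, writing
\[
((\xi\otimes\xi')\cdot(\rho\otimes\rho'))^{\star} = \bigl(P(\xi\star\rho\otimes\xi'\star\rho')\bigr)^{\star}.
\]
Since $\xi\star\rho$ and $\xi'\star\rho'$ are elements of $\mathcal{P}$ (in general non-essential), Proposition \ref{star} applies verbatim and lets me commute the star past the projector:
\[
\bigl(P(\xi\star\rho\otimes\xi'\star\rho')\bigr)^{\star} = P\bigl((\xi\star\rho\otimes\xi'\star\rho')^{\star}\bigr).
\]

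Next I would push the star through the tensor product and through the concatenations. By definition (\ref{eq:star}) the star acts factorwise, so $(\xi\star\rho\otimes\xi'\star\rho')^{\star} = (\xi\star\rho)^{\star}\otimes(\xi'\star\rho')^{\star}$. The one small lemma I need here is that time inversion is an anti-homomorphism for the concatenation product, i.e. $(\xi\star\rho)^{\star} = \rho^{\star}\star\xi^{\star}$. This is immediate from the concatenation definition: reading the vertex sequence of $\xi\star\rho$ backwards produces exactly the vertices of $\rho^{\star}$ followed by those of $\xi^{\star}$, while the matching constraint $\delta_{v_n v_{0'}}$ is symmetric under the reversal; being antilinear on each factor it extends from elementary paths to all of $\mathcal{E}$ by linearity. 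Substituting gives
\[
P\bigl((\xi\star\rho)^{\star}\otimes(\xi'\star\rho')^{\star}\bigr) = P\bigl(\rho^{\star}\star\xi^{\star}\otimes\rho'^{\star}\star\xi'^{\star}\bigr).
\]

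Finally I would recognize the right-hand side. By definition (\ref{eq:prod}) the last expression is precisely $(\rho^{\star}\otimes\rho'^{\star})\cdot(\xi^{\star}\otimes\xi'^{\star})$, and since the star acts factorwise on the tensor factors this equals $(\rho\otimes\rho')^{\star}\cdot(\xi\otimes\xi')^{\star}$, which is the claimed identity. The whole argument is thus a short chain of rewritings; the genuinely nontrivial ingredient, namely that the star commutes with the projector $P$, has already been supplied by Proposition \ref{star}. The only point requiring a moment's care is the anti-multiplicativity of time inversion on concatenated paths, but this is a direct consequence of the definitions, so I do not anticipate any real obstacle beyond making these bookkeeping steps explicit.
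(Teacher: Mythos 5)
Your proposal is correct and follows essentially the same route as the paper's proof: unfold the product via $P$, commute the star past $P$ using Proposition \ref{star}, and use the anti-multiplicativity of time inversion with respect to concatenation, $(\xi\star\rho)^{\star}=\rho^{\star}\star\xi^{\star}$. The only difference is cosmetic --- the paper runs the same chain of rewritings starting from the right-hand side and leaves the concatenation-reversal step implicit, whereas you spell it out, which is a welcome clarification rather than a deviation.
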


\begin{proof}

\begin{eqnarray*}
(\rho\otimes\rho')^{\star}\cdot(\xi\otimes\xi')^{\star} & = & P((\rho\otimes\rho')^{\star}\star(\xi\otimes\xi')^{\star})=P(((\xi\otimes\xi')\star(\rho\otimes\rho'))^{\star})\\
 & = & P((\xi\otimes\xi')\star(\rho\otimes\rho'))^{\star}=((\xi\otimes\xi')\cdot(\rho\otimes\rho'))^{\star}\end{eqnarray*}

\end{proof}

In order to prove the associativity of the product (\ref{eq:prod})
the following preliminary result is considered,

\begin{prop}\label{asosp}\begin{eqnarray}
P((\xi\otimes\xi')\star P(\eta\otimes\eta')) & = & P((\xi\otimes\xi')\star(\eta\otimes\eta'))\label{eq:asos}\\
P(P(\eta\otimes\eta')\star(\xi\otimes\xi')) & = & P((\eta\otimes\eta')\star(\xi\otimes\xi'))\;\;\forall\xi,\xi'\in\mathcal{E},\;\;\eta,\eta'\in\mathcal{P}\label{eq:asos1}\end{eqnarray}

\end{prop}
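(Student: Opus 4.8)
The plan is to prove the first identity (\ref{eq:asos}) in detail and to deduce (\ref{eq:asos1}) from it by means of the involution, so that all the real work is concentrated in (\ref{eq:asos}). For the deduction, observe that the reverse of an essential path is again essential, hence $(\xi\otimes\xi')^{\star}$ is again a tensor of essential paths; using that $\star$ is an antihomomorphism of the concatenation product (exactly as in the proof of (\ref{eq:star-2})) together with Proposition \ref{star} one obtains
\[
\bigl(P(P(\eta\otimes\eta')\star(\xi\otimes\xi'))\bigr)^{\star}=P\bigl((\xi\otimes\xi')^{\star}\star P((\eta\otimes\eta')^{\star})\bigr),
\]
and applying (\ref{eq:asos}) to the right-hand side (with essential factor $(\xi\otimes\xi')^{\star}$ and arbitrary factor $(\eta\otimes\eta')^{\star}$) turns it into $P((\xi\otimes\xi')^{\star}\star(\eta\otimes\eta')^{\star})=\bigl(P((\eta\otimes\eta')\star(\xi\otimes\xi'))\bigr)^{\star}$; undoing the star gives (\ref{eq:asos1}).

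Next I recast (\ref{eq:asos}) in kernel form. Since $P^{2}=P$, the difference of its two sides equals $P\bigl((\xi\otimes\xi')\star Z\bigr)$ with $Z=\eta\otimes\eta'-P(\eta\otimes\eta')\in\ker P$, and such $Z$ span $\ker P$. Thus (\ref{eq:asos}) is equivalent to the statement that left concatenation by a tensor of essential paths maps $\ker P$ into $\ker P$. Writing an ordered multi-index $\vec{j}=(j_{1}<\cdots<j_{n})$ and $c^{\dagger}_{\vec{j}}=c^{\dagger}_{j_{n}}\cdots c^{\dagger}_{j_{1}}$, with $|\vec{j}|=n$, the decomposition (\ref{eq:decomp-2}) and Definition \ref{A-projector-} show that $\ker P$ is spanned by the elements $c^{\dagger}_{\vec{j}}\xi_{a}\otimes c^{\dagger}_{\vec{i}}\xi_{b}-C(\vec{i};\vec{j})\,\xi_{a}\otimes\xi_{b}$ with $|\vec{i}|=|\vec{j}|$, together with the tensors having $|\vec{i}|\neq|\vec{j}|$; note that $C(\vec{i};\vec{j})=(c^{\dagger}_{\vec{j}}\xi_{c},c^{\dagger}_{\vec{i}}\xi_{c})$ vanishes when $|\vec{i}|\neq|\vec{j}|$ because the level subspaces $P_{n}^{(l)}$ are mutually orthogonal (Proposition \ref{ortdecomp}). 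The second tool is an index-shift identity read off directly from Definition \ref{def:Creation-and-annhilation}: prepending a path $\xi$ of length $m_{0}=\#\xi$ shifts every insertion point, so $\xi\star(c^{\dagger}_{\vec{j}}\chi)=c^{\dagger}_{\vec{j}+m_{0}}(\xi\star\chi)$ for any $\chi$. Applying this to both factors reduces the whole proposition to the single contraction identity
\[
P\bigl(c^{\dagger}_{\vec{j}+m_{0}}A\otimes c^{\dagger}_{\vec{i}+m_{0}'}A'\bigr)=C(\vec{i};\vec{j})\,P(A\otimes A'),\qquad(\ast)
\]
with $A=\xi\star\xi_{a}$, $A'=\xi'\star\xi_{b}$, $m_{0}'=\#\xi'$, which simultaneously covers the unequal-count generators since there $C(\vec{i};\vec{j})=0$.

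The main obstacle is $(\ast)$. To prove it I would expand $A$ and $A'$ in the decomposition (\ref{eq:decomp-2}), reduce by linearity to $A=c^{\dagger}_{\vec{b}}\chi$ and $A'=c^{\dagger}_{\vec{d}}\chi'$ with $\chi,\chi'$ essential, merge the prepended strings $c^{\dagger}_{\vec{j}+m_{0}}$, $c^{\dagger}_{\vec{i}+m_{0}'}$ with $c^{\dagger}_{\vec{b}}$, $c^{\dagger}_{\vec{d}}$ and reorder them into increasing order by Proposition \ref{order}, and finally evaluate $P$ through Proposition \ref{me}. By Definition \ref{A-projector-} the resulting scalar is the vacuum matrix element $(c_{\vec{K}}c^{\dagger}_{\vec{L}}\xi_{c},\xi_{c})$ of the merged annihilation/creation string, and the entire content of $(\ast)$ is that this matrix element factorizes as $C(\vec{i};\vec{j})\,C(\vec{d};\vec{b})$ — that is, the creation operators inherited from $\xi,\xi'$ decouple from the $c^{\dagger}_{\vec{b}},c^{\dagger}_{\vec{d}}$ blocks. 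This is the same bookkeeping used in the proof of Proposition \ref{me}: each annihilation operator is pushed to the right through the creation string using (\ref{eq:cc1}), a nonzero contraction forces index difference $0,\pm1$, and any uncontracted $c$ annihilates the essential $\chi$ or $\chi'$. The delicate point, where I expect to spend the most effort, is verifying the factorization after the shift by $m_{0},m_{0}'$ and the reordering: one must track how the index differences controlling the contractions in (\ref{eq:cc1}) evolve under the shift, and exploit that $\xi,\xi'$ being essential localizes all non-essential excitations of $A,A'$ at the concatenation junction, so that the two blocks cannot contract with each other. Granting this factorization, $(\ast)$ holds, and with it the kernel-preservation statement and hence both (\ref{eq:asos}) and (\ref{eq:asos1}).
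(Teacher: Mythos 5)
Your treatment of (\ref{eq:asos}) is, at its computational core, the same as the paper's: reduce via the decomposition (\ref{eq:decomp-2}) to typical terms $c^{\dagger}_{j_{n}}\cdots c^{\dagger}_{j_{1}}\xi_{a}\otimes c^{\dagger}_{i_{n}}\cdots c^{\dagger}_{i_{1}}\xi_{b}$, use the shift identity $\xi\star(c^{\dagger}_{j_{n}}\cdots c^{\dagger}_{j_{1}}\chi)=c^{\dagger}_{l+j_{n}}\cdots c^{\dagger}_{l+j_{1}}(\xi\star\chi)$ with $l=\#\xi$, and conclude by the translation invariance $C(i_{1},\cdots,i_{n};j_{n},\cdots,j_{1})=C(l+i_{1},\cdots,l+i_{n};l+j_{n},\cdots,l+j_{1})$, which follows from the dependence of (\ref{eq:cc1}) on index differences only; your reformulation as stability of $\ker P$ under left concatenation is equivalent packaging of the same argument. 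You do deviate in two genuine ways. First, the paper proves (\ref{eq:asos1}) by simply repeating the computation (``follows along identical lines''), whereas you deduce it from (\ref{eq:asos}) by the involution, using Proposition \ref{star}, the antihomomorphism property of $\star$ with respect to concatenation, and the stability of $\mathcal{E}$ under $\star$ (a fact the paper itself uses inside the proof of Proposition \ref{star}); this is valid and saves the duplicate calculation. Second, you make explicit a step the paper passes over silently: after the shift, the paper evaluates $P$ on $c^{\dagger}_{l+j_{n}}\cdots c^{\dagger}_{l+j_{1}}(\xi\star\xi_{a})\otimes c^{\dagger}_{l+i_{n}}\cdots c^{\dagger}_{l+i_{1}}(\xi'\star\xi_{b})$ as if Definition \ref{A-projector-} applied directly, but $\xi\star\xi_{a}$ is in general \emph{not} essential (e.g.\ $(01)\star(10)=2^{-1/4}c_{0}^{\dagger}(0)$ in $A_{3}$), so the honest justification is exactly your factorization $(\ast)$: decompose $\xi\star\xi_{a}$ further and check that the prepended creation block contracts only against its counterpart, yielding the scalar $C(\vec{i};\vec{j})$ times $P$ of the remainder. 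You state this and sketch its verification by the Proposition \ref{me} bookkeeping, but leave it as ``granting this factorization'' --- so your writeup is incomplete at precisely the point where the paper's own proof is, while being more transparent about it. Two cautions if you complete it: (i) as literally written, $(\ast)$ with independent shifts $m_{0}\neq m_{0}'$ is false, since only a \emph{common} shift preserves the index differences governing $C$; this case never arises because $P$ acts on $End^{gr}(\mathcal{P})=\bigoplus_{n}End(\mathcal{P}_{n})$, which forces $\#\xi=\#\xi'$, but you should say so explicitly; (ii) the locality appeal ``the two blocks cannot contract with each other'' is not automatic, because after reordering with Proposition \ref{order} the indices of the junction block $\vec{b}$ (which sit near $m_{0}-1$) interleave with the shifted block (indices $\geq m_{0}$), so the decoupling must be verified by tracking the contractions of (\ref{eq:cc1}) rather than asserted from the position of the excitations.
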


\begin{proof}As in proposition \ref{star}, typical contributions
to the decomposition of $\eta\otimes\eta'$ are considered. Thus the
following expression is dealt with,\[
P((\xi\otimes\xi')\star P(c_{j_{n}}^{\dagger}c_{j_{n-1}}^{\dagger}\cdots c_{j_{1}}^{\dagger}\xi_{a}\otimes c_{i_{n}}^{\dagger}c_{i_{n-1}}^{\dagger}\cdots c_{i_{1}}^{\dagger}\xi_{b}))=C(i_{1},\cdots,i_{n};j_{n},\cdots,j_{1})\, P((\xi\otimes\xi')\star(\xi_{a}\otimes\xi_{b}))\]
where it was assumed that $P(c_{i_{n}}^{\dagger}c_{i_{n-1}}^{\dagger}\cdots c_{i_{1}}^{\dagger}\xi_{a}\otimes c_{j_{n}}^{\dagger}c_{j_{n-1}}^{\dagger}\cdots c_{j_{1}}^{\dagger}\xi_{b})$
does not vanish(if it vanishes it can be easily seen that the r.h.s.
of eq.(\ref{eq:asos}) also vanishes). Next the r.h.s. of eq.(\ref{eq:asos})
is considered,\begin{eqnarray*}
 & P((\xi\otimes\xi')\star(c_{j_{n}}^{\dagger}c_{j_{n-1}}^{\dagger}\cdots c_{j_{1}}^{\dagger}\xi_{a}\otimes c_{i_{n}}^{\dagger}c_{i_{n-1}}^{\dagger}\cdots c_{i_{1}}^{\dagger}\xi_{b}))=\\
= & P(\xi\star c_{j_{n}}^{\dagger}c_{j_{n-1}}^{\dagger}\cdots c_{j_{1}}^{\dagger}\xi_{a}\,\otimes\,\xi'\star c_{i_{n}}^{\dagger}c_{i_{n-1}}^{\dagger}\cdots c_{i_{1}}^{\dagger}\xi_{b})\\
= & P(c_{l+j_{n}}^{\dagger}\cdots c_{l+j_{1}}^{\dagger}(\xi\star\xi_{a})\otimes c_{l+i_{n}}^{\dagger}\cdots c_{l+i_{1}}^{\dagger}(\xi'\star\xi_{b}))\\
= & C(l+i_{1}\cdots,l+i_{n};l+j_{n},\cdots,l+j_{1})P((\xi\otimes\xi')\star(\xi_{a}\otimes\xi_{b}))\end{eqnarray*}
where $l$ denotes the length of the path $\xi$. From its definition
(\ref{eq:C}) it follows that,\[
C(i_{1},\cdots,i_{n};j_{n},\cdots,j_{1})=C(l+i_{1}\cdots,l+i_{n};l+j_{n},\cdots,l+j_{1})\]
which completes the proof of the first equality. Eq.(\ref{eq:asos1})
follows along identical lines.

\end{proof}

Using this result associativity follows,

\begin{prop}\[
((\xi_{1}\otimes\xi_{1}')\cdot(\xi_{2}\otimes\xi_{2}'))\cdot(\xi_{3}\otimes\xi_{3}')=(\xi_{1}\otimes\xi_{1}')\cdot((\xi_{2}\otimes\xi_{2}')\cdot(\xi_{3}\otimes\xi_{3}'))\;\;\forall\xi_{i},\xi_{i}'\in\mathcal{E\;},i=1,2,3\]

\end{prop}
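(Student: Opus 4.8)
The plan is to reduce the associativity statement to Proposition \ref{asosp} together with the (manifest) factor-by-factor associativity of the concatenation product $\star$ on paths. The point is that $\cdot$ is just $\star$ post-composed with the projector $P$, and Proposition \ref{asosp} says precisely that an inner application of $P$ may be absorbed into an outer one once $\star$ is applied. So after expanding both nested products via the definition (\ref{eq:prod}), I expect both sides to collapse to the single expression $P(\xi_{1}\star\xi_{2}\star\xi_{3}\otimes\xi_{1}'\star\xi_{2}'\star\xi_{3}')$.

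First I would expand the left-hand side. By (\ref{eq:prod}) one has $(\xi_{1}\otimes\xi_{1}')\cdot(\xi_{2}\otimes\xi_{2}')=P(\xi_{1}\star\xi_{2}\otimes\xi_{1}'\star\xi_{2}')$, and applying the definition a second time gives
\[
((\xi_{1}\otimes\xi_{1}')\cdot(\xi_{2}\otimes\xi_{2}'))\cdot(\xi_{3}\otimes\xi_{3}')=P\big(P(\xi_{1}\star\xi_{2}\otimes\xi_{1}'\star\xi_{2}')\star(\xi_{3}\otimes\xi_{3}')\big).
\]
Here the inner $P$ sits on the left factor of the outer $\star$, so I would invoke the second identity (\ref{eq:asos1}) of Proposition \ref{asosp} (with $\eta\otimes\eta'=\xi_{1}\star\xi_{2}\otimes\xi_{1}'\star\xi_{2}'$ and $\xi\otimes\xi'=\xi_{3}\otimes\xi_{3}'$) to drop it, and then use associativity of $\star$ in each tensor slot to arrive at $P(\xi_{1}\star\xi_{2}\star\xi_{3}\otimes\xi_{1}'\star\xi_{2}'\star\xi_{3}')$.

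Symmetrically, for the right-hand side I would expand $(\xi_{2}\otimes\xi_{2}')\cdot(\xi_{3}\otimes\xi_{3}')=P(\xi_{2}\star\xi_{3}\otimes\xi_{2}'\star\xi_{3}')$ and then
\[
(\xi_{1}\otimes\xi_{1}')\cdot\big((\xi_{2}\otimes\xi_{2}')\cdot(\xi_{3}\otimes\xi_{3}')\big)=P\big((\xi_{1}\otimes\xi_{1}')\star P(\xi_{2}\star\xi_{3}\otimes\xi_{2}'\star\xi_{3}')\big).
\]
Now the inner $P$ sits on the right factor, so the first identity (\ref{eq:asos}) of Proposition \ref{asosp} applies and removes it; associativity of $\star$ again yields $P(\xi_{1}\star\xi_{2}\star\xi_{3}\otimes\xi_{1}'\star\xi_{2}'\star\xi_{3}')$. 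Since both sides equal this common expression, associativity follows.

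There is no real obstacle remaining once Proposition \ref{asosp} is in hand: that proposition carries the genuine content, since it encodes the compatibility between $P$ and $\star$ forced by the index-shift invariance $C(i_{1},\cdots;j_{1})=C(l+i_{1},\cdots;l+j_{1})$ of the coefficients. The only points needing (trivial) care here are matching each nested $P$ to the correct one of the two identities in Proposition \ref{asosp}, according to whether the projected factor stands to the left or to the right of $\star$, and observing that the intermediate arguments $\xi_{i}\star\xi_{j}$ are arbitrary paths in $\mathcal{P}$ rather than essential ones, which is exactly the generality in which (\ref{eq:asos}) and (\ref{eq:asos1}) are stated.
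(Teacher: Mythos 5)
Your proposal is correct and matches the paper's own proof essentially verbatim: both expand the nested products via the definition of $\cdot$, use eq.~(\ref{eq:asos1}) to absorb the inner $P$ on the left-hand side and eq.~(\ref{eq:asos}) on the right-hand side, and collapse both sides to the common expression $P((\xi_{1}\otimes\xi_{1}')\star(\xi_{2}\otimes\xi_{2}')\star(\xi_{3}\otimes\xi_{3}'))$. Your closing remarks, that Proposition~\ref{asosp} carries the real content and that its hypotheses ($\xi,\xi'\in\mathcal{E}$, $\eta,\eta'\in\mathcal{P}$) exactly fit the intermediate concatenations, are accurate.
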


\begin{proof}

\begin{eqnarray*}
((\xi_{1}\otimes\xi_{1}')\cdot(\xi_{2}\otimes\xi_{2}'))\cdot(\xi_{3}\otimes\xi_{3}') & = & P(P((\xi_{1}\otimes\xi_{1}')\star(\xi_{2}\otimes\xi_{2}'))\star(\xi_{3}\otimes\xi_{3}'))\\
 & = & P((\xi_{1}\otimes\xi_{1}')\star(\xi_{2}\otimes\xi_{2}')\star(\xi_{3}\otimes\xi_{3}'))\\
 & = & P((\xi_{1}\otimes\xi_{1}')\star P((\xi_{2}\otimes\xi_{2}')\star(\xi_{3}\otimes\xi_{3}')))\\
 & = & (\xi_{1}\otimes\xi_{1}')\cdot((\xi_{2}\otimes\xi_{2}')\cdot(\xi_{3}\otimes\xi_{3}'))\end{eqnarray*}

\end{proof}
\begin{example}
\emph{Product for the case of $A_{3}.$ }It can be explicitly verified
that in this case the product coincides with the one of the double
triangle algebra%
\footnote{In that reference product is calculated using the pairing with the
dual algebra, i.e. the 6j-symbols using the dual product (that is
the composition of endomorphisms) and coming back with the 6j-symbols.
The same construction for the case of $A_{[2]}$ is not known. The
extension of 6j-symbols(Ocneanu cells) for this last case is not obvious
since for example $A_{[2]}$ is not a bicolorable graph.%
} described in ref.\cite{CoqTr}. For illustrative purposes the calculation
of some of these products is given below,\begin{eqnarray*}
(21\otimes12)\cdot(12\otimes21) & = & P(212\otimes101)=\frac{1}{\sqrt{2}}P\left(\frac{1}{2^{1/4}}c_{0}^{\dagger}(2)\otimes(\frac{1}{2^{1/4}}c_{0}^{\dagger}(1)+\gamma)\right)=\frac{1}{\sqrt{2}}(2)\otimes(1)\\
(10\otimes12)\cdot(01\otimes21) & =\frac{1}{2} & P\left((\frac{1}{2^{1/4}}c_{0}^{\dagger}(1)-\gamma)\otimes(\frac{1}{2^{1/4}}c_{0}^{\dagger}(1)+\gamma)\right)=\frac{1}{2}(1\otimes1-\gamma\otimes\gamma)\\
(12\otimes12)\cdot(21\otimes21) & = & \frac{1}{2}P\left((\frac{1}{2^{1/4}}c_{0}^{\dagger}(1)+\gamma)\otimes(\frac{1}{2^{1/4}}c_{0}^{\dagger}(1)+\gamma)\right)=\frac{1}{2}(1\otimes1+\gamma\otimes\gamma)\\
(\gamma\otimes012)\cdot(10\otimes21) & = & P(\gamma\star10\otimes0121)=P\left((\frac{1}{2^{1/4}}c_{0}^{\dagger}-2^{1/4}c_{1}^{\dagger})(10)\otimes(2^{1/4}c_{1}^{\dagger}-\frac{1}{2^{1/4}}c_{0}^{\dagger})(01)\right)\\
 &  & =-(10)\otimes(01)\\
(\gamma\otimes\gamma)\cdot(\gamma\otimes\gamma) & = & P\left((c_{1}^{\dagger}-\frac{1}{\sqrt{2}}c_{2}^{\dagger})c_{0}^{\dagger}(1)\otimes(c_{1}^{\dagger}-\frac{1}{\sqrt{2}}c_{2}^{\dagger})c_{0}^{\dagger}(1)\right)=(1)\otimes(1)\end{eqnarray*}

\end{example}
$\;$
\begin{example}
\emph{Product for the case of $A_{[2]}.$ }With the notation of example
\ref{exa:Deca2} the following illustrative products can be computed,\begin{eqnarray*}
(21\otimes12)\cdot(12\otimes21) & = & P(212\otimes121)=P\left(\frac{1}{2}c_{0}^{\dagger}(2)-\frac{1}{2}\xi_{212}\otimes\frac{1}{2}c_{0}^{\dagger}(1)+\frac{1}{2}\xi_{121}\right)\\
 & = & \frac{1}{2}(2)\otimes(1)-\frac{1}{4}\xi_{212}\otimes\xi_{121}\\
(10\otimes12)\cdot(01\otimes21) & = & P(101\otimes121)=P\left((\frac{1}{2}c_{0}^{\dagger}(1)-\frac{1}{2}\xi_{101})\otimes(\frac{1}{2}c_{0}^{\dagger}(1)+\frac{1}{2}\xi_{121})\right)\\
 & = & \frac{1}{2}(1)\otimes(1)-\frac{1}{4}\xi_{121}\otimes\xi_{121})\\
(12\otimes12)\cdot(21\otimes21) & = & P(121\otimes121)=P\left((\frac{1}{2}c_{0}^{\dagger}(1)+\frac{1}{2}\xi_{121})\otimes(\frac{1}{2}c_{0}^{\dagger}(1)+\frac{1}{2}\xi_{121})\right)\\
 & = & \frac{1}{2}(1)\otimes(1)+\frac{1}{4}\xi_{121}\otimes\xi_{121})\\
(\xi_{121}\otimes012)\cdot(10\otimes21) & = & P(\xi_{121}\star10\otimes0121)\\
 & = & P\left((\frac{2}{3}c_{1}^{\dagger}-\frac{1}{3}c_{0}^{\dagger})(10)+\xi_{\xi_{121}\star10}\otimes(\frac{2}{3}c_{1}^{\dagger}-\frac{1}{3}c_{0}^{\dagger})(01)+\xi_{0121}\right)\\
 & = & \frac{2}{3}\,(10)\otimes(01)+\xi_{\xi_{121}\star10}\otimes\xi_{0121}\\
(\xi_{121}\otimes\xi_{121})\cdot(\xi_{121}\otimes\xi_{121}) & = & P\left((\frac{2}{3}c_{1}^{\dagger}c_{1}^{\dagger}-\frac{1}{3}c_{2}^{\dagger}c_{1}^{\dagger})(1)+\xi_{\xi_{121}\star\xi_{121}}\otimes(\frac{2}{3}c_{1}^{\dagger}c_{1}^{\dagger}-\frac{1}{3}c_{2}^{\dagger}c_{1}^{\dagger})(1)+\xi_{\xi_{121}\star\xi_{121}}\right)\\
 & = & \frac{4}{3}\,(1)\otimes(1)+\xi_{\xi_{121}\star\xi_{121}}\otimes\xi_{\xi_{121}\star\xi_{121}}\end{eqnarray*}
\emph{ }
\end{example}

\section{Weak bialgebra}

The definition of a weak $\star$-bialgebra is recalled,

\begin{defn}

A weak $\star$-bialgebra is a $\star$-algebra $A$ together with
two linear maps $\Delta:A\to A\otimes A$, the coproduct, and $\epsilon:\, A\mathbb{\to C}$,
the counit, satisfying the following axioms,\begin{eqnarray*}
\Delta(ab) & = & \Delta(a)\Delta(b)\\
\Delta(a^{\star}) & = & \Delta(a)^{\star}\\
(\Delta\otimes Id)\Delta & = & (Id\otimes\Delta)\Delta\end{eqnarray*}
and,\begin{eqnarray*}
\epsilon(ab) & = & \epsilon(a\mathbf{1_{1}})\epsilon(\mathbf{1_{2}}b)\\
(\epsilon\otimes Id)\Delta= & Id & =(Id\otimes\epsilon)\Delta\\
\epsilon(aa^{\star}) & \geq & 0\end{eqnarray*}
where in the first equation Sweedler convention is employed and also
in the following equation that defines $\mathbf{1_{1}}$ and $\mathbf{1_{2}}$,
\[
\Delta(\mathbf{1})=\mathbf{1_{1}}\otimes\mathbf{1_{2}}\]
with $\mathbf{1}$ being the identity in $A$ 

\end{defn}

The definition of coproduct and counit considered for the star algebra
of the previous section are,

\begin{defn}Coproduct%
\footnote{In the dual weak Hopf algebra to the one considered here, this coproduct
maps to the product. Eq. (\ref{eq:cop-1}) implies that this product
corresponds to the composition of endomorphisms in the dual weak Hopf
algebra.%
},\begin{equation}
\Delta(\xi\otimes\xi')=\sum_{\overset{\xi_{a}\in\mathcal{E}}{\#\xi_{a}=\#\xi}}\;\xi\otimes\xi_{a}\boxtimes\xi^{a}\otimes\xi'\label{eq:cop-1}\end{equation}

where the summation runs over a complete orthonormal basis for $\mathcal{E}.$

\end{defn}

\begin{defn}Counit,\[
\epsilon(\xi\otimes\xi')=(\xi,\xi')\]

\end{defn}

The axioms appearing in the definition of a weak bialgebra are fairly
simple to prove for the above definitions except for the morphism
property for the coproduct and the one involving the counit of a product.
For the first property the following preliminary results are useful,

\begin{prop}\label{copp}\begin{equation}
\Delta P=P^{\otimes2}\Delta_{\mathcal{P}}\label{eq:cop}\end{equation}
where $\Delta_{\mathcal{P}}(\chi\otimes\chi')=\sum_{\eta\in\mathcal{P}}\chi\otimes\eta\boxtimes\eta\otimes\chi'$
with summation over a complete orthonormal basis of $\mathcal{P}$.

\end{prop}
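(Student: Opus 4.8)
The plan is to check the operator identity $\Delta P=P^{\otimes2}\Delta_{\mathcal P}$ on the spanning set of $End(\mathcal P_n)$ provided by the decomposition (\ref{eq:decomp-2}), i.e. on elements $\chi\otimes\chi'$ with $\chi=c_{j_p}^\dagger\cdots c_{j_1}^\dagger\xi_a$ and $\chi'=c_{i_q}^\dagger\cdots c_{i_1}^\dagger\xi_b$ (with $j_1<\cdots<j_p\le n-2$, $i_1<\cdots<i_q\le n-2$, $\#\xi_a=n-2p$, $\#\xi_b=n-2q$); by linearity this suffices. I write $C(\vec a;\vec b)$ as shorthand for the contraction coefficient of Definition \ref{A-projector-}, with $\vec a$ the annihilation (right–factor) and $\vec b$ the creation (left–factor) multi-index. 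First I would dispose of $p\neq q$: by Proposition \ref{me} the projector $P$ kills any tensor whose two legs carry a different number of creation operators, so $P(\chi\otimes\chi')=0$ and the left side vanishes; expanding $\Delta_{\mathcal P}$ in an orthonormal basis adapted to the orthogonal decomposition $\mathcal P_n=\bigoplus_l P_n^{(l)}$ of Proposition \ref{ortdecomp}, each basis vector sits in a single $P_n^{(l)}$, and then $P(\chi\otimes\eta)$ forces $l=p$ while $P(\eta\otimes\chi')$ forces $l=q$, so every summand on the right vanishes too. Hence I may assume $p=q$ and set $m=n-2p=\#\xi_a=\#\xi_b$.

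For the left side, Definition \ref{A-projector-} gives $P(\chi\otimes\chi')=C(\vec i;\vec j)\,\xi_a\otimes\xi_b$, whence
\[
\Delta P(\chi\otimes\chi')=C(\vec i;\vec j)\!\!\sum_{\substack{\xi_c\in\mathcal E\\ \#\xi_c=m}}\!\!\xi_a\otimes\xi_c\boxtimes\xi_c\otimes\xi_b .
\]
For the right side I would use Proposition \ref{me} again to note that $P(\chi\otimes\eta)=P(\chi\otimes\Pi_n^{(p)}\eta)$ and $P(\eta\otimes\chi')=P(\Pi_n^{(p)}\eta\otimes\chi')$, so the sum $\sum_{\eta}P(\chi\otimes\eta)\boxtimes P(\eta\otimes\chi')$ over the orthonormal basis of $\mathcal P_n$ collapses to a sum over an orthonormal basis of $P_n^{(p)}$. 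Writing $w_{\vec k,d}=c_{k_p}^\dagger\cdots c_{k_1}^\dagger\xi_d$ ($k_1<\cdots<k_p\le n-2$, $\xi_d\in\mathcal E_m$) for the (non-orthonormal) spanning family of $P_n^{(p)}$ given by (\ref{eq:decomp-2}), the corresponding copairing is $\sum_{\vec k,\vec k',d}(\mathcal G^{-1})_{\vec k\vec k'}\,w_{\vec k,d}\boxtimes w_{\vec k',d}$, where $\mathcal G_{\vec k\vec k'}=(w_{\vec k,d},w_{\vec k',d})$; by Proposition \ref{me} this Gram matrix is diagonal in the essential label $d$ and, on each diagonal block, independent of $d$. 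Evaluating $P$ on each leg through Definition \ref{A-projector-} then yields
\[
P^{\otimes2}\Delta_{\mathcal P}(\chi\otimes\chi')=\sum_{\substack{\xi_d\in\mathcal E\\ \#\xi_d=m}}\Big(\sum_{\vec k,\vec k'}(\mathcal G^{-1})_{\vec k\vec k'}\,C(\vec k;\vec j)\,C(\vec i;\vec k')\Big)\,\xi_a\otimes\xi_d\boxtimes\xi_d\otimes\xi_b .
\]

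Comparing the two displays (the middle sums both run over essential paths of length $m$), the proposition reduces to the single scalar identity $\sum_{\vec k,\vec k'}(\mathcal G^{-1})_{\vec k\vec k'}\,C(\vec k;\vec j)\,C(\vec i;\vec k')=C(\vec i;\vec j)$. The decisive observation, which I expect to be the heart of the matter, is that the contraction coefficients $C$ are exactly the entries of this Gram matrix: fixing an essential $\xi$ of length $m$ and setting $V_{\vec r}=c_{r_p}^\dagger\cdots c_{r_1}^\dagger\xi$, one unwinds the inner products via Proposition \ref{me} to get $(V_{\vec a},V_{\vec b})=C(\vec b;\vec a)$, so that $C(\vec k;\vec j)=(V_{\vec j},V_{\vec k})$ and $C(\vec i;\vec k')=(V_{\vec k'},V_{\vec i})=\mathcal G_{\vec k'\vec i}$. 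Because the $V_{\vec r}$ are linearly independent — they form a basis of the $\xi$–sector of $P_n^{(p)}$ by the decomposition theorem — $\mathcal G$ is invertible, so $\sum_{\vec k'}(\mathcal G^{-1})_{\vec k\vec k'}\mathcal G_{\vec k'\vec i}=\delta_{\vec k\vec i}$ and the double sum collapses to $C(\vec i;\vec j)$, matching the left-hand coefficient exactly (and, as required, independently of $d$). The main obstacle is therefore purely one of book-keeping: tracking the order of the creation and annihilation indices so that the $C$'s line up as genuine Gram entries, and checking that the non-orthonormality of the creation-operator family is precisely absorbed by the factor $\mathcal G^{-1}$ coming from the copairing; once this is done the identity is nothing more than $\mathcal G^{-1}\mathcal G=I$.
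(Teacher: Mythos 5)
Your proposal is correct, but it takes a genuinely different and noticeably heavier route than the paper's own proof. The paper also tests the identity on typical terms $c^{\dagger}_{j_n}\cdots c^{\dagger}_{j_1}\xi_a\otimes c^{\dagger}_{i_n}\cdots c^{\dagger}_{i_1}\xi_b$, but on the right-hand side it keeps the sum over the full orthonormal basis of $\mathcal{P}$ and collapses it at once by completeness (Parseval): $\sum_{\eta}(c^{\dagger}_{j_n}\cdots c^{\dagger}_{j_1}\xi_c,\eta)(\eta,c^{\dagger}_{i_n}\cdots c^{\dagger}_{i_1}\xi_d)=(c^{\dagger}_{j_n}\cdots c^{\dagger}_{j_1}\xi_c,\,c^{\dagger}_{i_n}\cdots c^{\dagger}_{i_1}\xi_d)$, which by Proposition \ref{me} equals $\delta_{cd}\,C(i_1,\ldots,i_n;j_n,\ldots,j_1)$ --- exactly the coefficient produced by $\Delta P$ on the left. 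That is the entire argument: no restriction of $\eta$ to $P_n^{(p)}$, no Gram matrix, no inversion. You instead truncate the copairing to $P_n^{(p)}$, rewrite it in the non-orthonormal family $w_{\vec k,d}$ with an inverse Gram matrix, recognize the contraction coefficients as Gram entries via $C(\vec b;\vec a)=(V_{\vec a},V_{\vec b})$, and finish with $\mathcal{G}\,\mathcal{G}^{-1}\mathcal{G}=\mathcal{G}$. What your route buys: it makes explicit the structural fact that the $C$'s \emph{are} Gram entries, and it treats the mismatched case $p\neq q$ carefully, which the paper glosses with the clause that the projection is assumed not to vanish. What it costs: two inputs the paper's proof never needs --- the invertibility of $\mathcal{G}$, i.e.\ the linear independence of $\{w_{\vec k,d}\}_{\vec k}$ (the directness of the sum over index tuples in (\ref{eq:decomp-2})), and the basis-independence of $\sum_{\eta}\eta\boxtimes\eta$ when you pass to a basis adapted to $\bigoplus_l P_n^{(l)}$, which you invoke silently and which holds only for \emph{real} orthogonal changes of basis (harmless here, since all spaces are real in the elementary-path basis, but worth a word). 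Finally, note that your own key identity $C(\vec k;\vec j)=(V_{\vec j},V_{\vec k})$, applied together with completeness over all of $\mathcal{P}_n$ \emph{before} projecting, would short-circuit your Gram computation to the paper's two lines.
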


\begin{proof}

Eq. (\ref{eq:cop-1}) is applied to a generic element $\eta\otimes\eta'$of
$End(\mathcal{P})$, typical terms in the decomposition (\ref{eq:decomp-2})
with non-vanishing image by the projector are considered,\[
\Delta P(c_{j_{n}}^{\dagger}c_{j_{n-1}}^{\dagger}\cdots c_{j_{1}}^{\dagger}\xi_{a}\otimes c_{i_{n}}^{\dagger}c_{i_{n-1}}^{\dagger}\cdots c_{i_{1}}^{\dagger}\xi_{b})=\sum_{\xi_{c}\in\mathcal{E}.}\, C(i_{1},\cdots,i_{n};j_{n},\cdots,j_{1})\;\xi_{a}\otimes\xi_{c}\boxtimes\xi_{c}\otimes\xi_{b}\]
where it was assumed that $P(c_{j_{n}}^{\dagger}c_{j_{n-1}}^{\dagger}\cdots c_{j_{1}}^{\dagger}\xi\otimes c_{i_{n}}^{\dagger}c_{i_{n-1}}^{\dagger}\cdots c_{i_{1}}^{\dagger}\xi')$
does not vanish. On the other hand,\begin{eqnarray*}
 & P^{\otimes2}\Delta_{\mathcal{P}}(c_{j_{n}}^{\dagger}c_{j_{n-1}}^{\dagger}\cdots c_{j_{1}}^{\dagger}\xi_{a}\otimes c_{i_{n}}^{\dagger}c_{i_{n-1}}^{\dagger}\cdots c_{i_{1}}^{\dagger}\xi_{b})=\\
= & P^{\otimes2}(\sum_{\eta\in\mathcal{P}}c_{j_{n}}^{\dagger}c_{j_{n-1}}^{\dagger}\cdots c_{j_{1}}^{\dagger}\xi_{a}\otimes\eta\boxtimes\eta\otimes c_{i_{n}}^{\dagger}c_{i_{n-1}}^{\dagger}\cdots c_{i_{1}}^{\dagger}\xi_{b})\\
= & \sum_{\xi_{c},\xi_{d}\in\mathcal{E}}\,\sum_{\eta\in\mathcal{P}}(\xi_{c},c_{j_{1}}\cdots c_{j_{n}}\eta)(c_{i_{1}}\cdots c_{i_{n}}\eta,\xi_{d})\,\xi_{a}\otimes\xi_{c}\boxtimes\xi_{d}\otimes\xi_{b}\\
= & \sum_{\xi_{c},\xi_{d}\in\mathcal{E}}\,\sum_{\eta\in\mathcal{P}}(c_{j_{n}}^{\dagger}\cdots c_{j_{1}}^{\dagger}\xi_{c},\eta)(\eta,c_{i_{n}}^{\dagger}\cdots c_{i_{1}}^{\dagger}\xi_{d})\,\xi_{a}\otimes\xi_{c}\boxtimes\xi_{d}\otimes\xi_{b}\\
= & \sum_{\xi_{c},\xi_{d}\in\mathcal{E}}\,(c_{j_{n}}^{\dagger}\cdots c_{j_{1}}^{\dagger}\xi_{c},c_{i_{n}}^{\dagger}\cdots c_{i_{1}}^{\dagger}\xi_{d})\,\xi_{a}\otimes\xi_{c}\boxtimes\xi_{d}\otimes\xi_{b}\\
= & \sum_{\xi_{c},\xi_{d}\in\mathcal{E}}\,(\xi_{c},c_{j_{1}}\cdots c_{j_{n}}c_{i_{n}}^{\dagger}\cdots c_{i_{1}}^{\dagger}\xi_{d})\,\xi_{a}\otimes\xi_{c}\boxtimes\xi_{d}\otimes\xi_{b}\\
= & \sum_{\xi_{c}\in\mathcal{E}}\, C(i_{1},\cdots,i_{n};j_{n},\cdots,j_{1})\;\xi_{a}\otimes\xi_{c}\boxtimes\xi_{c}\otimes\xi_{b}\end{eqnarray*}

\end{proof}

\begin{prop}\label{2copp}

\[
P^{\otimes2}(\Delta_{\mathcal{P}}(\xi_{a}\otimes\xi_{b})\star\Delta_{\mathcal{P}}(\xi_{c}\otimes\xi_{d}))=P^{\otimes2}[P^{\otimes2}\Delta_{\mathcal{P}}(\xi_{a}\otimes\xi_{b})\star P^{\otimes2}\Delta_{\mathcal{P}}(\xi_{c}\otimes\xi_{d})]\]

\end{prop}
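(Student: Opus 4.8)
The plan is to read this identity as the tensor-square analogue of the absorption property of Proposition \ref{asosp}. Writing $U=\Delta_{\mathcal{P}}(\xi_a\otimes\xi_b)$ and $V=\Delta_{\mathcal{P}}(\xi_c\otimes\xi_d)$ and recalling that $\star$ acts factorwise in each of the two $\boxtimes$-slots, the statement is exactly
\[
P^{\otimes2}(U\star V)=P^{\otimes2}\bigl(P^{\otimes2}U\star P^{\otimes2}V\bigr),
\]
i.e. the two inner copies of $P^{\otimes2}$ are absorbed by the outer one, precisely as the inner $P$ is absorbed in (\ref{eq:asos}) and (\ref{eq:asos1}). First I would simplify the left-hand side. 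Using $\Delta_{\mathcal{P}}(a)\star\Delta_{\mathcal{P}}(b)=\Delta_{\mathcal{P}}(a\star b)$ (which follows because $\eta\otimes\eta'\mapsto\eta\star\eta'$ is a bijection of the relevant elementary-path bases, splitting each length-$(n_1+n_2)$ path at position $n_1=\#\xi_a$) together with Proposition \ref{copp}, the left-hand side equals $\Delta P\bigl((\xi_a\star\xi_c)\otimes(\xi_b\star\xi_d)\bigr)$. The whole content of the proposition is then that the right-hand side computes the same object, which is the germ of the coproduct-homomorphism axiom.

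Next I would expand the right-hand side explicitly. By Proposition \ref{copp} and (\ref{eq:cop-1}) the inner projectors turn the middle sums of $U$ and $V$ from sums over $\mathcal{P}$ into sums over $\mathcal{E}$, so that both sides become sums over a shared ``middle'' path in the slot that is paired with $\xi_a\star\xi_c$ in the first $\boxtimes$-factor and with $\xi_b\star\xi_d$ in the second, the only difference being whether that middle index runs over all of $\mathcal{P}$ or only over $\mathcal{E}$. The goal is thus reduced to showing that, after the outer $P^{\otimes2}$ is applied, the full orthonormal sum over $\mathcal{P}$ in the middle slot may be replaced by the essential sum over $\mathcal{E}$. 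I would prove this exactly as in Proposition \ref{asosp}: decompose the shared middle path according to (\ref{eq:decomp-2}) and apply $P$ term by term, using that under concatenation the creation operators of the decomposed path are shifted uniformly, $\eta\star c^{\dagger}_{\vec k}\xi'=c^{\dagger}_{\#\eta+\vec k}(\eta\star\xi')$, and that the coefficients $C$ are invariant under this shift, $C(\vec i;\vec j)=C(l+\vec i;l+\vec j)$, as recorded after (\ref{eq:cc1}) and already exploited in the proof of Proposition \ref{asosp}. Term by term the contribution with the inner projection then equals the contribution without it, giving the desired absorption in each $\boxtimes$-factor.

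The hard part will be controlling the coupling between the two $\boxtimes$-factors that $\Delta_{\mathcal{P}}$ introduces: the middle path appears \emph{simultaneously} as the right-hand tensor component of the first factor and as the left-hand tensor component of the second, and it is literally the same path in both. I must therefore check that decomposing this single shared path induces the \emph{same} index shift $l$ in both factors, so that the translation invariance of the $C$-coefficients can be invoked consistently and the term-by-term cancellation of Proposition \ref{asosp} survives the tensoring rather than being spoiled by a mismatch between the two slots. Once this bookkeeping is pinned down, summing over the decomposition levels and over a complete essential basis (as in the completeness manipulations of Propositions \ref{star} and \ref{copp}) identifies the right-hand side with $\Delta P\bigl((\xi_a\star\xi_c)\otimes(\xi_b\star\xi_d)\bigr)$, matching the left-hand side computed in the first paragraph and closing the proof.
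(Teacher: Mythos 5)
Your opening reduction is sound and is essentially the paper's own treatment of the left-hand side: the splitting bijection giving $\Delta_{\mathcal{P}}(a)\star\Delta_{\mathcal{P}}(b)=\Delta_{\mathcal{P}}(a\star b)$ (with lengths fixed), combined with Proposition \ref{copp}, is equivalent to the paper's insertion of $P\boxtimes P$ under the outer $P^{\otimes2}$ by idempotency. Likewise your expansion of the right-hand side via the inner projectors matches the paper's eq.\ (\ref{eq:p2}). But you then misstate the residual problem. After the inner $P^{\otimes2}$'s, the shared middle of the right-hand side is $\omega\star\sigma$ with $\omega\in\mathcal{E}_{n_{1}}$, $\sigma\in\mathcal{E}_{n_{2}}$, and such a concatenation is in general \emph{not} an essential path of length $n_{1}+n_{2}$ (in $A_{3}$, $(01)\star(10)=(010)=2^{-1/4}c_{0}^{\dagger}(0)$). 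So the statement to be proved is not ``replace the sum over $\mathcal{P}$ by the essential sum over $\mathcal{E}$'' but ``replace the full orthonormal sum over $\mathcal{P}_{n_{1}+n_{2}}$ by the sum over the orthonormal family $\{\omega\star\sigma\}$ spanning the subspace $\mathcal{E}_{n_{1}}\star\mathcal{E}_{n_{2}}$,'' which is a different and strictly larger subspace than $\mathcal{E}_{n_{1}+n_{2}}$.

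The fatal gap is in your proposed mechanism. The claimed term-by-term absorption is false: for a fixed middle basis vector with, say, $\eta$ orthogonal to $\mathcal{E}_{n_{1}}$, the inner-projected summand vanishes identically, since $P(\xi_{a}\otimes\eta)=\xi_{a}\otimes\Pi_{n_{1}}^{(0)}\eta=0$, while nothing in your argument forces the unprojected summand $P(\xi_{a}\star\xi_{c}\otimes\eta\star\chi)\boxtimes P(\eta\star\chi\otimes\xi_{b}\star\xi_{d})$ to vanish; the equality of the two sides is an identity of \emph{sums} over the complete middle bases, not of individual terms. The shift identity $\eta\star c_{\vec{k}}^{\dagger}\xi'=c_{\#\eta+\vec{k}}^{\dagger}(\eta\star\xi')$ together with $C(\vec{i};\vec{j})=C(l+\vec{i};l+\vec{j})$ powers Proposition \ref{asosp} only because there the un-projected concatenating factors $\xi,\xi'$ are themselves essential, so the entire decomposition of the other factor is rigidly shifted with unchanged coefficients; in your situation the left half $\eta$ of the middle is a generic path, $\eta\star\xi'$ is not an admissible seed of (\ref{eq:decomp-2}), and no uniform shift relates the decomposition of $\eta\star\chi$ to that of $\chi$. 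Consequently your ``hard part'' (matching the shift $l$ between the two $\boxtimes$-slots) is a red herring: the paper never decomposes the middle path at all. Its mechanism is the completeness collapse, as in Propositions \ref{star} and \ref{copp}: because one slot of each inner factor is already essential, $P(\xi_{a}\otimes\eta)\star P(\xi_{c}\otimes\chi)$ is evaluated in closed form, and the sums over the orthonormal middles are then contracted using $\sum_{\eta}(\omega,\eta)(\omega',\eta)=\delta_{\omega\omega'}$ (this is the meaning of the paper's $\delta_{\omega\eta}$ step; note that the substitution it performs is itself only valid under the $\eta,\chi$ summations --- for a fixed non-essential $\eta$ it fails --- which is exactly why no termwise route of the kind you propose can succeed). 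As written, your proposal defers the entire content of the proposition to a final ``identification'' it never carries out, so the proof is incomplete.
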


\begin{proof}

\begin{eqnarray*}
P^{\otimes2}(\Delta_{\mathcal{P}}(\xi_{a}\otimes\xi_{b})\star\Delta_{\mathcal{P}}(\xi_{c}\otimes\xi_{d})) & = & P^{\otimes2}(\sum_{\eta,\chi\in\mathcal{P}}(\xi_{a}\otimes\eta\boxtimes\eta\otimes\xi_{b})\star(\xi_{c}\otimes\chi\boxtimes\chi\otimes\xi_{d}))\\
 & = & P^{\otimes2}(\sum_{\eta,\chi\in\mathcal{P}}P(\xi_{a}\star\xi_{c}\otimes\eta\star\chi)\boxtimes P(\eta\star\chi\otimes\xi_{b}\star\xi_{d}))\end{eqnarray*}
on the other hand,\begin{eqnarray}
P^{\otimes2}[P^{\otimes2}\Delta_{\mathcal{P}}(\xi\otimes\xi')\star P^{\otimes2}\Delta_{\mathcal{P}}(\rho\otimes\rho')] & = & P^{\otimes2}[P^{\otimes2}\sum_{\eta,\chi\in\mathcal{P}}(\xi_{a}\otimes\eta\boxtimes\eta\otimes\xi_{b})\star P^{\otimes2}\Delta_{\mathcal{P}}(\xi_{c}\otimes\chi\boxtimes\chi\otimes\xi_{d})]\nonumber \\
 & = & P^{\otimes2}\sum_{\eta,\chi\in\mathcal{P}}P(P(\xi_{a}\otimes\eta)\star P(\xi_{c}\otimes\chi))\boxtimes P(P(\eta\otimes\xi_{b})\star P(\chi\otimes\xi_{d}))\label{eq:p2}\end{eqnarray}
next it is noted that,\begin{eqnarray*}
P(\xi_{a}\otimes\eta)\star P(\xi_{c}\otimes\chi) & = & \sum_{\omega,\sigma\in\mathcal{E}}(\omega,\eta)(\sigma,\chi)(\xi_{a}\otimes\omega)\star(\xi_{c}\otimes\sigma)\\
 & = & \sum_{\omega,\sigma\in\mathcal{E}}\,(\xi_{a}\star\xi_{c})\otimes(\omega\star\sigma)\,\delta_{\omega\eta}\delta_{\sigma\chi}\\
 & = & (\xi_{a}\star\xi_{c})\otimes(\eta\star\chi)\end{eqnarray*}
replacing in (\ref{eq:p2}) leads to,\[
P^{\otimes2}[P^{\otimes2}\Delta_{\mathcal{P}}(\xi_{a}\otimes\xi_{b})\star P^{\otimes2}\Delta_{\mathcal{P}}(\xi_{c}\otimes\xi_{d})]=P^{\otimes2}(\sum_{\eta,\chi\in\mathcal{P}}P(\xi_{a}\star\xi_{c}\otimes\eta\star\chi)\boxtimes P(\eta\star\chi\otimes\xi_{b}\star\xi_{d}))\]

\end{proof}

Using the above results leads to,

\begin{prop}

\[
\Delta((\xi\otimes\xi')\cdot(\rho\otimes\rho'))=\Delta(\xi\otimes\xi')\cdot\Delta(\rho\otimes\rho')\]

\end{prop}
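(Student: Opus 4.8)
The plan is to translate both sides of the asserted identity into expressions built only from the path coproduct $\Delta_{\mathcal{P}}$ and the projector $P^{\otimes2}$, and then to collapse the comparison to a single combinatorial fact about concatenation of bases. The three previously established results will do almost all of the work: Proposition \ref{copp} ($\Delta P=P^{\otimes2}\Delta_{\mathcal{P}}$) lets me move $\Delta$ past the projector, Proposition \ref{2copp} lets me delete inner copies of $P^{\otimes2}$, and the only genuinely new ingredient will be a multiplicativity property of $\Delta_{\mathcal{P}}$ with respect to the concatenation $\star$.

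First I would rewrite the left-hand side. By the definition of the product (\ref{eq:prod}) we have $(\xi\otimes\xi')\cdot(\rho\otimes\rho')=P(\xi\star\rho\otimes\xi'\star\rho')$, so applying $\Delta$ and invoking Proposition \ref{copp} gives
\[
\Delta\big((\xi\otimes\xi')\cdot(\rho\otimes\rho')\big)=\Delta P(\xi\star\rho\otimes\xi'\star\rho')=P^{\otimes2}\Delta_{\mathcal{P}}(\xi\star\rho\otimes\xi'\star\rho').
\]
For the right-hand side I would first record that the product on $A\otimes A$ is the factorwise one, so that $(a\boxtimes b)\cdot(c\boxtimes d)=P(a\star c)\boxtimes P(b\star d)=P^{\otimes2}\big((a\boxtimes b)\star(c\boxtimes d)\big)$, with $\star$ acting across the $\boxtimes$. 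Since $\xi,\xi',\rho,\rho'$ are essential, $P$ fixes both $\xi\otimes\xi'$ and $\rho\otimes\rho'$, and Proposition \ref{copp} then gives $\Delta(\xi\otimes\xi')=P^{\otimes2}\Delta_{\mathcal{P}}(\xi\otimes\xi')$ (and likewise for $\rho\otimes\rho'$), in agreement with the definition (\ref{eq:cop-1}). Hence
\[
\Delta(\xi\otimes\xi')\cdot\Delta(\rho\otimes\rho')=P^{\otimes2}\big(P^{\otimes2}\Delta_{\mathcal{P}}(\xi\otimes\xi')\star P^{\otimes2}\Delta_{\mathcal{P}}(\rho\otimes\rho')\big),
\]
and Proposition \ref{2copp} removes the inner projectors, leaving $P^{\otimes2}\big(\Delta_{\mathcal{P}}(\xi\otimes\xi')\star\Delta_{\mathcal{P}}(\rho\otimes\rho')\big)$.

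Comparing the two sides, the proof is complete once I establish the multiplicativity of the path coproduct,
\[
\Delta_{\mathcal{P}}(\xi\otimes\xi')\star\Delta_{\mathcal{P}}(\rho\otimes\rho')=\Delta_{\mathcal{P}}(\xi\star\rho\otimes\xi'\star\rho'),
\]
which I expect to be the main obstacle and the one nonformal step. Expanding the left side, the factorwise concatenation yields $\sum_{\eta,\chi\in\mathcal{P}}(\xi\star\rho)\otimes(\eta\star\chi)\boxtimes(\eta\star\chi)\otimes(\xi'\star\rho')$, with $\eta$ ranging over $\mathcal{P}_{\#\xi}$ and $\chi$ over $\mathcal{P}_{\#\rho}$. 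The key combinatorial observation is that, in the orthonormal bases of elementary paths, the assignment $(\eta,\chi)\mapsto\eta\star\chi$ restricted to concatenable pairs is a bijection onto the elementary basis of $\mathcal{P}_{\#\xi+\#\rho}$: any elementary path of that length splits uniquely at the vertex in position $\#\xi$, and each resulting concatenation is again a unit elementary path, while non-concatenable pairs contribute zero. Therefore the middle insertion $\sum_{\eta,\chi}(\eta\star\chi)\otimes(\eta\star\chi)$ is precisely the resolution of the identity $\sum_{\sigma\in\mathcal{P}_{\#\xi+\#\rho}}\sigma\otimes\sigma$ appearing in $\Delta_{\mathcal{P}}(\xi\star\rho\otimes\xi'\star\rho')$, which proves the displayed identity. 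Applying $P^{\otimes2}$ and chaining the three reductions then identifies the right-hand side with $P^{\otimes2}\Delta_{\mathcal{P}}(\xi\star\rho\otimes\xi'\star\rho')$, i.e. with the left-hand side, completing the argument.
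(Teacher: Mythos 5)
Your proposal is correct and follows essentially the same chain of reductions as the paper's proof: rewrite the left side via the product definition and Proposition \ref{copp}, rewrite the right side via the factorwise product on $A\otimes A$ and Proposition \ref{2copp}, and match the results using the multiplicativity $\Delta_{\mathcal{P}}((\xi\otimes\xi')\star(\rho\otimes\rho'))=\Delta_{\mathcal{P}}(\xi\otimes\xi')\star\Delta_{\mathcal{P}}(\rho\otimes\rho')$. The only difference is that you explicitly justify this last identity (via the unique-splitting bijection on elementary paths, with non-concatenable pairs contributing zero), a step the paper's third equality asserts without proof, so your writeup is if anything slightly more complete.
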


\begin{proof}

\begin{eqnarray*}
\Delta((\xi\otimes\xi')\cdot(\rho\otimes\rho')) & = & \Delta(P((\xi\otimes\xi')\star(\rho\otimes\rho')))\\
 & = & P^{\otimes2}\Delta_{\mathcal{P}}((\xi\otimes\xi')\star(\rho\otimes\rho'))\\
 & = & P^{\otimes2}(\Delta_{\mathcal{P}}(\xi\otimes\xi')\star\Delta_{\mathcal{P}}(\rho\otimes\rho'))\\
 & = & P^{\otimes2}[P^{\otimes2}\Delta_{\mathcal{P}}(\xi\otimes\xi')\star P^{\otimes2}\Delta_{\mathcal{P}}(\rho\otimes\rho')]\\
 & = & P^{\otimes2}[\Delta(P(\xi\otimes\xi'))\star\Delta(P(\rho\otimes\rho'))]\\
 & = & \Delta(\xi\otimes\xi')\cdot\Delta(\rho\otimes\rho')\end{eqnarray*}

\end{proof}

Regarding the counit of a product the following result will be employed,

\begin{prop}\label{coup}

\[
\epsilon(P(\eta\otimes\eta'))=\epsilon(\eta\otimes\eta')\;\;\forall\,\eta,\eta'\in\mathcal{P}\]

\end{prop}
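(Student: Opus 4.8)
The plan is to reduce everything to the definition of the coefficient $C$ in Proposition \ref{me} by expanding $\eta$ and $\eta'$ according to the decomposition (\ref{eq:decomp-2}) and exploiting the orthogonality of its summands (Proposition \ref{ortdecomp}). First I would read the right-hand side as the extension of $\epsilon$ to $End^{gr}(\mathcal{P})$ by the same formula $\epsilon(\eta\otimes\eta')=(\eta,\eta')$, now with the inner product of $\mathcal{P}$. Since both the inner product and the projector $P$ are bilinear, it suffices to verify the identity on pairs of monomial summands
\[
T=c_{j_{n}}^{\dagger}\cdots c_{j_{1}}^{\dagger}\xi_{a},\qquad T'=c_{i_{m}}^{\dagger}\cdots c_{i_{1}}^{\dagger}\xi_{b}
\]
arising from the decompositions of $\eta$ and $\eta'$, with ordered indices $j_{1}<\cdots<j_{n}$, $i_{1}<\cdots<i_{m}$ and $\xi_{a},\xi_{b}\in\mathcal{E}$.

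Next I would split into two cases according to whether the numbers of creation operators agree. If $n\neq m$, then $T$ and $T'$ lie in distinct summands $P^{(n)}$ and $P^{(m)}$ of (\ref{eq:decomp-2}), so $(T,T')=0$ by Proposition \ref{ortdecomp}; on the other side $P(T\otimes T')=0$, since the projector annihilates terms with unequal numbers of $c^{\dagger}$'s applied to essential paths, whence $\epsilon(P(T\otimes T'))=0$ as well. If $n=m$, then Definition \ref{A-projector-} gives $P(T\otimes T')=C(i_{1},\cdots,i_{n};j_{n},\cdots,j_{1})\,\xi_{a}\otimes\xi_{b}$, and therefore
\[
\epsilon(P(T\otimes T'))=C(i_{1},\cdots,i_{n};j_{n},\cdots,j_{1})\,(\xi_{a},\xi_{b})=C(i_{1},\cdots,i_{n};j_{n},\cdots,j_{1})\,\delta_{ab}.
\]

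The only step requiring care is to show that the bare inner product $(T,T')$ equals the same number. Using that $c_{i}$ is the adjoint of $c_{i}^{\dagger}$, I would move the annihilation operators across the inner product,
\[
(T,T')=(c_{j_{n}}^{\dagger}\cdots c_{j_{1}}^{\dagger}\xi_{a},\,c_{i_{n}}^{\dagger}\cdots c_{i_{1}}^{\dagger}\xi_{b})=(c_{i_{1}}\cdots c_{i_{n}}c_{j_{n}}^{\dagger}\cdots c_{j_{1}}^{\dagger}\xi_{a},\,\xi_{b}),
\]
which is precisely the matrix element on the left of (\ref{eq:C}); the index hypotheses $\#\xi_{a}=\#\xi_{b}\geq i_{1},j_{1}$ of Proposition \ref{me} hold because $T,T'$ are genuine summands of the decomposition. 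Hence $(T,T')=C(i_{1},\cdots,i_{n};j_{n},\cdots,j_{1})\,\delta_{ab}$, matching $\epsilon(P(T\otimes T'))$. Summing over all pairs of monomials recovers $\epsilon(P(\eta\otimes\eta'))=(\eta,\eta')=\epsilon(\eta\otimes\eta')$.

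The main obstacle is essentially bookkeeping: ensuring the annihilation operators are applied in the exact order that defines $C$, and invoking Proposition \ref{ortdecomp} correctly to discard the cross-level terms. Once the adjoint relation turns the inner product of two decomposition summands into the defining matrix element of $C$, the identity is immediate, so no genuinely new estimate is needed beyond what Propositions \ref{ortdecomp} and \ref{me} already supply.
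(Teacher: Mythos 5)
Your proof is correct and follows essentially the same route as the paper's: expand $\eta\otimes\eta'$ into monomial terms of the decomposition (\ref{eq:decomp-2}), apply Definition \ref{A-projector-} so that $\epsilon(P(T\otimes T'))$ collapses to the matrix element $C$ of Proposition \ref{me}, and use the adjoint relation to identify this with the bare inner product $(T,T')=\epsilon(T\otimes T')$. Your explicit handling of the unequal-level case via Proposition \ref{ortdecomp} merely spells out what the paper leaves implicit when it restricts to terms with non-vanishing image under $P$.
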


\begin{proof}

A generic element $\eta\otimes\eta'$of $End(\mathcal{P})$ is considered,
typical terms in the decomposition (\ref{eq:decomp-2}) with non-vanishing
image by the projector are considered,

\begin{eqnarray*}
 &  & \epsilon(P(c_{j_{n}}^{\dagger}c_{j_{n-1}}^{\dagger}\cdots c_{j_{1}}^{\dagger}\xi_{a}\otimes c_{i_{n}}^{\dagger}c_{i_{n-1}}^{\dagger}\cdots c_{i_{1}}^{\dagger}\xi_{b}))=\\
 &  & =\epsilon(\sum_{\xi_{c}\in\mathcal{E}}\;(c_{i_{1}}c_{i_{2}}\cdots c_{i_{n}}c_{j_{n}}^{\dagger}c_{j_{n-1}}^{\dagger}\cdots c_{j_{1}}^{\dagger}\xi_{a},\xi_{c})\;\xi_{c}\otimes\xi_{b})\\
 &  & =(c_{i_{1}}c_{i_{2}}\cdots c_{i_{n}}c_{j_{n}}^{\dagger}c_{j_{n-1}}^{\dagger}\cdots c_{j_{1}}^{\dagger}\xi_{a},\xi_{b})\\
 &  & =\epsilon(c_{j_{n}}^{\dagger}c_{j_{n-1}}^{\dagger}\cdots c_{j_{1}}^{\dagger}\xi_{a}\otimes c_{i_{n}}^{\dagger}c_{i_{n-1}}^{\dagger}\cdots c_{i_{1}}^{\dagger}\xi_{b})\end{eqnarray*}

\end{proof}

thus,

\begin{prop}

\[
\epsilon((\xi_{a}\otimes\xi_{b})\cdot(\xi_{c}\otimes\xi_{d}))=\epsilon((\xi_{a}\otimes\xi_{b})\cdot\mathbf{1}_{1})\epsilon(\mathbf{1}_{2}\cdot(\xi_{c}\otimes\xi_{d}))\]

\end{prop}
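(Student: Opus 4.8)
The plan is to compute both sides of the identity explicitly and to check that they yield the same scalar. I would work with an orthonormal basis $\{\xi_a\}$ of $\mathcal{E}$ adapted to the grading of each $\mathcal{E}_n$ by initial and final vertex; such a basis exists because the annihilation operators $c_i$ act in the interior of a path and hence preserve its endpoints, so the essential-path condition of Definition \ref{sec:-Essential-Paths} respects this grading. Both sides of the asserted identity are bilinear in their two arguments, so it suffices to verify it on such basis elements, for which I write $s(\xi)$ and $t(\xi)$ for the initial and final vertices of an essential path $\xi$ of definite endpoints.

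For the left-hand side I would use the product (\ref{eq:prod}) together with Proposition \ref{coup} to write
\[\epsilon((\xi_a\otimes\xi_b)\cdot(\xi_c\otimes\xi_d))=\epsilon(P(\xi_a\star\xi_c\otimes\xi_b\star\xi_d))=\epsilon(\xi_a\star\xi_c\otimes\xi_b\star\xi_d)=(\xi_a\star\xi_c,\xi_b\star\xi_d).\]
The remaining work is the inner product of concatenated paths. Expanding each essential path in the elementary basis, using that the concatenation of two elementary paths is again an elementary path (nonzero precisely when the junction vertices match) and that elementary paths are orthonormal, one finds $(p\star q,r\star s)=\delta_{p,r}\delta_{q,s}$ for elementary paths of matching lengths. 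This factorizes the inner product as
\[(\xi_a\star\xi_c,\xi_b\star\xi_d)=\delta_{t(\xi_a),s(\xi_c)}\,\delta_{t(\xi_b),s(\xi_d)}\,(\xi_a,\xi_b)(\xi_c,\xi_d).\]

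For the right-hand side I would first compute $\Delta(\mathbf{1})$. Applying (\ref{eq:cop-1}) to $\mathbf{1}=\sum_{v,v'\in\mathcal{E}_0}v\otimes v'$ and noting that each $v\in\mathcal{E}_0$ has length zero, the internal sum runs over $\mathcal{E}_0$, giving
\[\Delta(\mathbf{1})=\sum_{v,v',w\in\mathcal{E}_0}(v\otimes w)\boxtimes(w\otimes v'),\]
so that $\mathbf{1}_1=v\otimes w$ and $\mathbf{1}_2=w\otimes v'$ in Sweedler notation. Concatenating with a length-zero path merely imposes an endpoint constraint, $\xi\star v=\delta_{t(\xi),v}\,\xi$ and $v\star\xi=\delta_{v,s(\xi)}\,\xi$, and $P$ fixes $End^{gr}(\mathcal{E})$ pointwise; hence, again via Proposition \ref{coup},
\[\epsilon((\xi_a\otimes\xi_b)\cdot(v\otimes w))=\delta_{t(\xi_a),v}\,\delta_{t(\xi_b),w}\,(\xi_a,\xi_b),\]
\[\epsilon((w\otimes v')\cdot(\xi_c\otimes\xi_d))=\delta_{w,s(\xi_c)}\,\delta_{v',s(\xi_d)}\,(\xi_c,\xi_d).\]
Multiplying these and summing over $v,v',w\in\mathcal{E}_0$ collapses the $v$- and $v'$-sums and leaves $\delta_{t(\xi_b),s(\xi_c)}$ from the $w$-sum, so
\[\epsilon((\xi_a\otimes\xi_b)\cdot\mathbf{1}_1)\,\epsilon(\mathbf{1}_2\cdot(\xi_c\otimes\xi_d))=\delta_{t(\xi_b),s(\xi_c)}\,(\xi_a,\xi_b)(\xi_c,\xi_d).\]

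The last and only delicate step is to reconcile the two delta prefactors: the left-hand side carries $\delta_{t(\xi_a),s(\xi_c)}\delta_{t(\xi_b),s(\xi_d)}$ while the right-hand side carries the single factor $\delta_{t(\xi_b),s(\xi_c)}$. I expect this to be the main obstacle, and I would dispose of it by observing that both expressions are weighted by $(\xi_a,\xi_b)(\xi_c,\xi_d)$, a scalar that vanishes unless $\xi_a,\xi_b$ lie in the same endpoint sector and $\xi_c,\xi_d$ lie in the same endpoint sector. On the support of this weight one therefore has $t(\xi_a)=t(\xi_b)$ and $s(\xi_c)=s(\xi_d)$, and substituting these equalities gives $\delta_{t(\xi_a),s(\xi_c)}\delta_{t(\xi_b),s(\xi_d)}=\delta_{t(\xi_b),s(\xi_c)}$ wherever the weight is nonzero. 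Hence the two sides coincide for all adapted basis elements, and by bilinearity the counit axiom $\epsilon(ab)=\epsilon(a\mathbf{1}_1)\epsilon(\mathbf{1}_2 b)$ holds in general.
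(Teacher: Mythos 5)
Your proof is correct and takes essentially the same route as the paper's: reduce the left side to $(\xi_a\star\xi_c,\xi_b\star\xi_d)$ via Proposition \ref{coup}, expand $\Delta(\mathbf{1})=\sum_{v,u,v'}v\otimes u\boxtimes u\otimes v'$ on the right, and match the endpoint delta factors. You merely make explicit two steps the paper leaves implicit --- the choice of an endpoint-adapted orthonormal basis of $\mathcal{E}$ (justified by the $c_i$ preserving endpoints) and the reconciliation of $\delta_{t(\xi_a),s(\xi_c)}\delta_{t(\xi_b),s(\xi_d)}$ with $\delta_{t(\xi_b),s(\xi_c)}$ on the support of $(\xi_a,\xi_b)(\xi_c,\xi_d)$ --- which is a sound clarification rather than a different argument.
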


\begin{proof}

\[
\epsilon((\xi_{a}\otimes\xi_{b})\cdot(\xi_{c}\otimes\xi_{d}))=\epsilon(P(\xi_{a}\star\xi_{c}\otimes\xi_{b}\star\xi_{d}))=\epsilon(\xi_{a}\star\xi_{c}\otimes\xi_{b}\star\xi_{d})=(\xi_{a}\star\xi_{c},\xi_{b}\star\xi_{d})\]
on the other hand,\begin{eqnarray*}
\epsilon((\xi_{a}\otimes\xi_{b})\cdot\mathbf{1}_{1})\epsilon(\mathbf{1}_{2}\cdot(\xi_{c}\otimes\xi_{d})) & = & \sum_{v,u,v'}\epsilon((\xi_{a}\otimes\xi_{b})\cdot(v\otimes u))\epsilon((u\otimes v')\cdot(\xi_{c}\otimes\xi_{d}))\\
 & = & \sum_{v,u,v'}\;\delta_{r(\xi_{a})v}\delta_{r(\xi_{b})u}\delta_{us(\xi_{c})}\delta_{v's(\xi_{d})}(\xi_{a},\xi_{b})(\xi_{c},\xi_{d})\\
 & = & (\xi_{a}\star\xi_{c},\xi_{b}\star\xi_{d})=\epsilon((\xi_{a}\otimes\xi_{b})\cdot(\xi_{c}\otimes\xi_{d}))\end{eqnarray*}

\end{proof}

\section{The antipode}

In general the axioms to be satisfied by the antipode are,\begin{eqnarray}
S(ab) & = & S(b)S(a)\nonumber \\
S((S(a^{\star})^{\star}) & = & a\nonumber \\
\Delta(S(a)) & = & S\otimes S\,(\Delta^{op}(a))\nonumber \\
S(a_{1})\cdot a_{2}\otimes a_{3} & = & \mathbf{1}_{1}\otimes a\mathbf{1}_{2}\label{eq:axant}\end{eqnarray}
where in the last equation Sweedler convention has been employed. 

The following ansatz for the antipode is considered,\begin{equation}
S(\xi\otimes\omega)=F(\xi,\omega)\;\omega^{\star}\otimes\xi^{\star}\label{eq:antip}\end{equation}
where $F(\xi,\omega)$ is a numerical factor to be determined. It
is fairly simple to show that the first three axioms in (\ref{eq:axant})
are satisfied by this definition. The proof of the last axiom is more
involved. The following preliminary results are considered,

\begin{prop}\label{forthback-1}The following holds,

\begin{equation}
c_{n-1}^{\dagger}c_{n-2}^{\dagger}\cdots c_{0}^{\dagger}(v_{0})=\sum_{\eta\in\mathcal{P}_{n}/s(\eta)=v_{0}}\,\sqrt{\frac{\mu_{r(\eta)}}{\mu_{s(\eta)}}}\;\eta\star\eta^{\star}\label{eq:backforth}\end{equation}
where the summation is over the orthonormal basis of elementary paths
with starting vertex $v_{0}$ and,\begin{equation}
(\Pi_{n}^{(0)}\star\Pi_{n}^{(0)})c_{n-1}^{\dagger}c_{n-2}^{\dagger}\cdots c_{0}^{\dagger}(v_{0})=\sum_{\xi\in\mathcal{E}_{n}/s(\eta)=v_{0}}\,\sqrt{\frac{\mu_{r(\xi)}}{\mu_{s(\xi)}}}\;\xi\star\xi^{\star}\label{eq:projbf}\end{equation}
where $\Pi_{n}^{(0)}$ is the orthogonal projector over essential
paths of length $n$ mentioned after proposition \ref{ortdecomp},
the notation $(\Pi_{n}^{(0)}\star\Pi_{n}^{(0)})$ indicates that when
applied to a path of length $2n$ this operator projects over paths
that are essential in its first $n$ steps and also essential in its
last $n$ steps.

\end{prop}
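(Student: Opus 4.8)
The plan is to prove the two identities separately: equation (\ref{eq:backforth}) by induction on $n$, and equation (\ref{eq:projbf}) by applying $\Pi_n^{(0)}\star\Pi_n^{(0)}$ to the first identity and collapsing the resulting double sum over $\mathcal{E}_n$. For (\ref{eq:backforth}) I would induct on $n$; the base case $n=1$ is just the definition of $c_0^\dagger$ acting on $(v_0)$. For the inductive step, assuming the identity for $n-1$, each summand $\eta\star\eta^\star$ with $\eta=(v_0,\dots,v_{n-1})$ is a path of length $2(n-1)$ whose vertex in position $n-1$ is the peak $r(\eta)=v_{n-1}$. Applying $c_{n-1}^\dagger$ inserts a nearest neighbour $v$ of $v_{n-1}$ at that peak, producing $(v_0,\dots,v_{n-1},v,v_{n-1},\dots,v_0)=\eta'\star\eta'^{\star}$ with $\eta'=(v_0,\dots,v_{n-1},v)\in\mathcal{P}_n$. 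The crucial point is that the Perron--Frobenius weights telescope, $\sqrt{\mu_{r(\eta)}/\mu_{s(\eta)}}\,\sqrt{\mu_v/\mu_{v_{n-1}}}=\sqrt{\mu_v/\mu_{v_0}}=\sqrt{\mu_{r(\eta')}/\mu_{s(\eta')}}$, and that summing over $\eta$ and over $v$ exactly reconstructs the sum over all $\eta'\in\mathcal{P}_n$ with $s(\eta')=v_0$.

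For (\ref{eq:projbf}) I would first read the operator $\Pi_n^{(0)}\star\Pi_n^{(0)}$ as splitting a length-$2n$ path at its middle vertex and applying $\Pi_n^{(0)}$ to each length-$n$ half; on a product $\eta\star\eta^\star$ this yields $(\Pi_n^{(0)}\eta)\star(\Pi_n^{(0)}\eta^\star)$. Two preliminary facts are needed. First, the orthonormal basis $\{\xi_a\}$ of $\mathcal{E}_n$ can be chosen with definite endpoints, since the operators $c_i$ preserve $s$ and $r$. Second, and more importantly, reversal preserves $\mathcal{E}_n$: a direct computation from Definition \ref{def:Creation-and-annhilation} gives $(c_i^\dagger\phi)^\star=c_{n-2-i}^\dagger(\phi^\star)$ for $\phi\in\mathcal{P}_{n-2}$ (the weights $\sqrt{\mu_v/\mu_{u_i}}$ match since the peak vertex is preserved), so reversal maps $\sum_i\mathrm{Im}\,c_i^\dagger$ to itself and, being anti-unitary, maps its orthogonal complement $\mathcal{E}_n$ to itself.

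With these in hand, applying $\Pi_n^{(0)}\star\Pi_n^{(0)}$ to (\ref{eq:backforth}) and expanding each half in $\{\xi_a\}$ gives $\sum_{a,b}\big(\sum_{\eta}\sqrt{\mu_{r(\eta)}/\mu_{v_0}}\,(\xi_a,\eta)\,\overline{(\xi_b^\star,\eta)}\big)\,\xi_a\star\xi_b$, where I have rewritten $(\xi_b,\eta^\star)=\overline{(\xi_b^\star,\eta)}$ using (\ref{eq:prodstar}). Since $(\xi_a,\eta)\neq0$ forces $s(\eta)=s(\xi_a)=v_0$ and $r(\eta)=r(\xi_a)$, the weight is constant on the support and factors out as $\sqrt{\mu_{r(\xi_a)}/\mu_{v_0}}$, while completeness of the elementary basis $\{\eta\}$ collapses $\sum_\eta(\xi_a,\eta)\overline{(\xi_b^\star,\eta)}$ to $(\xi_a,\xi_b^\star)$. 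Then (\ref{eq:prodstar}) gives $(\xi_a,\xi_b^\star)=(\xi_b,\xi_a^\star)$, and summing over $b$ via completeness of $\{\xi_b\}$ in $\mathcal{E}_n$, legitimate precisely because $\xi_a^\star\in\mathcal{E}_n$, yields $\sum_b(\xi_b,\xi_a^\star)\xi_b=\xi_a^\star$. The double sum therefore collapses to the diagonal $\sum_{a:\,s(\xi_a)=v_0}\sqrt{\mu_{r(\xi_a)}/\mu_{v_0}}\;\xi_a\star\xi_a^\star$, which is exactly the right-hand side of (\ref{eq:projbf}).

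I expect the main obstacle to be the second identity, and within it the reversal lemma $(c_i^\dagger\phi)^\star=c_{n-2-i}^\dagger(\phi^\star)$ together with the endpoint/weight bookkeeping that lets $\sqrt{\mu_{r(\eta)}/\mu_{v_0}}$ be pulled out of the $\eta$-sum. Checking that the matching of middle vertices in the concatenations $\xi_a\star\xi_b$ is automatically enforced by the non-vanishing of $(\xi_a,\xi_b^\star)$, so that no spurious cross terms survive, is the kind of detail that must be verified but should cause no real difficulty.
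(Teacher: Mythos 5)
Your proposal is correct and is essentially the direct verification the paper has in mind: the paper's own proof is the one\--line remark that the statement ``follows from definition (\ref{eq:crea})'', and your induction for (\ref{eq:backforth}) plus projection argument for (\ref{eq:projbf}) simply supplies the details of that verification. The only ingredient you prove that the paper does not state here, the reversal relation $(c_i^{\dagger}\phi)^{\star}=c_{n-2-i}^{\dagger}(\phi^{\star})$ implying $\mathcal{E}_n^{\star}=\mathcal{E}_n$, is exactly the relation the paper uses inside its proof of Proposition \ref{star}, so your route is consistent with the paper throughout.
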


\begin{proof}It follows from definition (\ref{eq:crea}).

\end{proof}

\begin{prop}\label{cbf}Let $\xi,\rho\in\mathcal{E}_{n}$, then,
\[
c_{i_{1}}\cdots c_{i_{n}}\xi^{\star}\star\rho=\delta_{i_{n}n-1}\cdots\delta_{i_{1}0}\;\delta_{\rho\xi}\sqrt{\frac{\mu_{v_{n}^{\xi^{\star}}}}{\mu_{v_{0}^{\xi^{\star}}}}}s(\xi^{\star})\]

\end{prop}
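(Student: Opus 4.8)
The plan is to first single out the unique surviving ordering of annihilators and then evaluate it. Write $\Phi=\xi^{\star}\star\rho\in\mathcal{P}_{2n}$, and record two facts read off from definition~\ref{eq:crea}. Time reversal preserves essentiality: $c_i\xi^{\star}$ is, up to a Perron--Frobenius factor, the reversal of $c_{n-2-i}\xi$, so $\xi\in\mathcal{E}_n$ forces $\xi^{\star}\in\mathcal{E}_n$. Moreover an annihilator acts locally on a concatenation, $c_i(\sigma\star\tau)=(c_i\sigma)\star\tau$ for $i\le(\mathrm{len}\,\sigma)-2$ and $c_i(\sigma\star\tau)=\sigma\star(c_{i-\mathrm{len}\,\sigma}\,\tau)$ for $i\ge\mathrm{len}\,\sigma$. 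Applying these to $\Phi$, whose two halves are the essential paths $\xi^{\star}$ and $\rho$, gives $c_i\Phi=0$ for every admissible $i\neq n-1$: only the annihilator $c_{n-1}$ straddling the junction can survive.

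Next I would show this property propagates, which pins down the whole index string. Put $\Phi^{(0)}=\Phi$ and $\Phi^{(k)}=c_{n-k}\Phi^{(k-1)}$, and suppose $c_i\Phi^{(k-1)}=0$ for all admissible $i\neq m$, with $m=n-k$. Using $c_jc_i=c_ic_{j+2}$ for $j\ge i$ from Proposition~\ref{order}, I get $c_jc_m\Phi^{(k-1)}=c_mc_{j+2}\Phi^{(k-1)}=0$ for $j\ge m$ and $c_jc_m\Phi^{(k-1)}=c_{m-2}c_j\Phi^{(k-1)}=0$ for $j\le m-2$, so $\Phi^{(k)}$ is again killed by every annihilator except the new junction $c_{m-1}$. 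Reading $c_{i_1}\cdots c_{i_n}\Phi$ from the right, $c_{i_n}$ must be $c_{n-1}$, then $c_{i_{n-1}}$ must be $c_{n-2}$, and so on; this produces the factor $\delta_{i_n,n-1}\cdots\delta_{i_1,0}$ and leaves $c_0c_1\cdots c_{n-1}\Phi\in\mathcal{P}_0$ to evaluate.

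To evaluate this I would pair with a vertex $u$ and use adjointness, $(c_0\cdots c_{n-1}\Phi,u)=(\Phi,\,c_{n-1}^{\dagger}\cdots c_0^{\dagger}u)$, and then invoke the forth--back identity (\ref{eq:backforth}) of Proposition~\ref{forthback-1}, which rewrites $c_{n-1}^{\dagger}\cdots c_0^{\dagger}u=\sum_{\eta:\,s(\eta)=u}\sqrt{\mu_{r(\eta)}/\mu_{s(\eta)}}\,\eta\star\eta^{\star}$. It then remains to compute $(\xi^{\star}\star\rho,\eta\star\eta^{\star})$. Expanding $\xi,\rho$ in the elementary basis and using orthonormality, a \emph{valley} $\eta_\alpha^{\star}\star\zeta_\beta$ coincides with the \emph{mountain} $\eta\star\eta^{\star}$ only when $\eta_\alpha=\zeta_\beta=\eta^{\star}$, so this inner product isolates the common component $\eta^{\star}$ of $\xi$ and $\rho$.

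The step I expect to be the real work is this last resummation, and what makes it collapse is that essential paths have definite endpoints: since $c_i$ preserves the two extremities of a path, $\mathcal{E}_n$ splits into source--range sectors and the basis $\{\xi_a\}$ can be taken adapted to them. Hence every surviving $\eta^{\star}$ has $s(\eta^{\star})=s(\xi)$ and $r(\eta^{\star})=r(\xi)=r(\rho)$, so the weight $\sqrt{\mu_{r(\eta)}/\mu_{s(\eta)}}=\sqrt{\mu_{s(\xi)}/\mu_{r(\xi)}}$ is constant and factors out, forcing $u=r(\xi)$; the leftover sum $\sum_{\eta}\overline{(\xi,\eta^{\star})}\,(\rho,\eta^{\star})$ equals $(\rho,\xi)=\delta_{\rho\xi}$ by completeness and orthonormality of $\{\xi_a\}$. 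Since $r(\xi)=s(\xi^{\star})$ and $\sqrt{\mu_{s(\xi)}/\mu_{r(\xi)}}=\sqrt{\mu_{v_n^{\xi^{\star}}}/\mu_{v_0^{\xi^{\star}}}}$, collecting the constant weight with $u=s(\xi^{\star})$ yields exactly the right-hand side of the claim.
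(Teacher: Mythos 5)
Your proof is correct, but it departs from the paper's argument in two substantive ways, both in how the index string is pinned down and in how the final scalar is evaluated. The paper shares your opening observation that, both halves of $\xi^{\star}\star\rho$ being essential, only the junction operator $c_{n-1}$ can act nontrivially; but it then iterates by brute force on elementary components, computing $c_{n-1}(\xi_{I}^{\star}\star\rho_{I})$, asserting ``in a similar fashion only $c_{n-2}$ survives,'' and telescoping the resulting deltas $\delta_{v_{n-k}^{\xi^{\star}}v_{k}^{\rho}}$ and factors $\sqrt{\mu_{v_{n-k+1}^{\xi^{\star}}}/\mu_{v_{n-k}^{\xi^{\star}}}}$ down to $\delta_{\rho\xi}\sqrt{\mu_{v_{n}^{\xi^{\star}}}/\mu_{v_{0}^{\xi^{\star}}}}$. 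Your inductive propagation via $c_{j}c_{i}=c_{i}c_{j+2}$ from Proposition \ref{order} is a genuine improvement on that step: the paper's term-by-term phrasing is slightly loose, since an individual elementary term $\xi_{I,n-1}^{\star}\star\rho_{I,n-1}$ need not be killed by the non-junction annihilators --- only the full linear combination is --- and your commutation argument establishes exactly the needed statement ($c_{j}\Phi^{(k)}=0$ for $j\neq n-k-1$) at the level of the sum. Your evaluation step is also different: instead of the paper's direct contraction, you dualize, writing $(c_{0}\cdots c_{n-1}\Phi,u)=(\Phi,c_{n-1}^{\dagger}\cdots c_{0}^{\dagger}u)$ and expanding via eq.~(\ref{eq:backforth}) of Proposition \ref{forthback-1} into mountain paths $\eta\star\eta^{\star}$, so that completeness of the elementary basis yields $(\rho,\xi)=\delta_{\rho\xi}$; this is in effect the computation in the antipode proof (Proposition \ref{antipp}) run in reverse, and it is legitimate since Proposition \ref{forthback-1} is proved independently of \ref{cbf}, so no circularity arises. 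What the paper's route buys is self-containedness and an explicit view of which vertices must match; what yours buys is rigor on the vanishing claims and shorter bookkeeping. You were also right to flag the one hypothesis both proofs quietly need: $\xi,\rho$ must be basis elements with definite endpoints (the source--range sector decomposition, which holds because each $c_{i}$ in definition (\ref{eq:crea}) preserves extremities), since otherwise $\delta_{\rho\xi}$, $s(\xi^{\star})$ and the constancy of the weight $\sqrt{\mu_{r(\eta)}/\mu_{s(\eta)}}$ in your last resummation would not be well posed.
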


\begin{proof}Since $\xi^{\star},\rho\in\mathcal{E}$ then the only
$c$ operator that could give a non-zero result when applied the path
$\xi^{\star}\star\rho$ is $c_{n-1}$(thus $i_{m}=n-1$), indeed it
gives a non-zero result only if given a certain elementary path $\xi_{I}^{\star}$
appearing in the expression of $\xi^{\star}$there is a corresponding
elementary path $\rho_{I}$ appearing in the expression of $\rho$
such that the first step in $\xi_{I}$ (i.e. the inverse of the last
step of $\xi_{I}^{\star}$) coincides with the first step of $\rho_{I}$.
More precisely if $\xi_{I}^{\star}=(v_{0}^{\xi^{\star}},v_{1}^{\xi^{\star}},\cdots,v_{n}^{\xi^{\star}})$
and $\rho_{I}=(v_{0}^{\rho},v_{1}^{\rho},\cdots,v_{n}^{\rho})$ then,\begin{eqnarray*}
c_{n-1}(\xi_{I}^{\star}\star\rho_{I}) & = & \delta_{v_{n}^{\xi^{\star}}v_{0}^{\rho}}\,\delta_{v_{n-1}^{\xi^{\star}}v_{1}^{\rho}}\,\sqrt{\frac{\mu_{v_{n}^{\xi^{\star}}}}{\mu_{v_{n-1}^{\xi^{\star}}}}}(v_{0}^{\xi^{\star}},v_{1}^{\xi^{\star}},\cdots,v_{n-1}^{\xi^{\star}},v_{1}^{\rho},\cdots,v_{n}^{\rho})\\
 & = & \delta_{v_{n}^{\xi^{\star}}v_{0}^{\rho}}\,\delta_{v_{n-1}^{\xi^{\star}}v_{1}^{\rho}}\,\sqrt{\frac{\mu_{v_{n}^{\xi^{\star}}}}{\mu_{v_{n-1}^{\xi^{\star}}}}}\,\xi_{I,n-1}^{\star}\star\rho_{I,n-1}\end{eqnarray*}
the first delta function appears because the concatenation $\xi^{\star}\star\rho$
should not vanish, the second from the definition of the $c$ operator
and the last equality is just a definition of the path $\xi_{I,n-1}^{\star}\star\rho_{I,n-1}$.
Next consider the application of a $c$-operator to $\xi_{I,n-1}^{\star}\star\rho_{I,n-1}$,
in a similar fashion, only $c_{n-2}$ (thus $i_{m-1}=n-2$) gives
a non-zero result, which is,\begin{eqnarray*}
c_{n-2}(\xi_{I,n-1}^{\star}\star\rho_{I,n-1}) & = & \delta_{v_{n-2}^{\xi^{\star}}v_{2}^{\rho}}\sqrt{\frac{\mu_{v_{n-1}^{\xi^{\star}}}}{\mu_{v_{n-2}^{\xi^{\star}}}}}(v_{0}^{\xi^{\star}},v_{1}^{\xi^{\star}},\cdots,v_{n-2}^{\xi^{\star}},v_{2}^{\rho},\cdots,v_{n}^{\rho})\\
 & = & \delta_{v_{n-2}^{\xi^{\star}}v_{2}^{\rho}}\sqrt{\frac{\mu_{v_{n-1}^{\xi^{\star}}}}{\mu_{v_{n-2}^{\xi^{\star}}}}}\,\xi_{I,n-2}^{\star}\star\rho_{I,n-2}\end{eqnarray*}
proceeding in this way and collecting the contribution of each elementary
term, finally leads to,\[
c_{i_{1}}\cdots c_{i_{m}}\xi^{\star}\star\rho=\delta_{i_{m}n-1}\cdots\delta_{i_{1}0}\;\delta_{\rho\xi}\sqrt{\frac{\mu_{v_{n}^{\xi^{\star}}}}{\mu_{v_{0}^{\xi^{\star}}}}}s(\xi^{\star})\]

\end{proof}

Using the above result leads to,

\begin{prop}\label{antipp}Definition (\ref{eq:antip}) satisfies
(\ref{eq:axant}) with,\[
F(\xi,\omega)=\sqrt{\frac{\mu_{s(\omega)}\mu_{r(\xi)}}{\mu_{r(\omega)}\mu_{s(\xi)}}}\]

\end{prop}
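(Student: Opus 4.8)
Since the first three identities in (\ref{eq:axant}) are declared routine, the plan is to concentrate on the fourth (linearity) axiom $S(a_{1})\cdot a_{2}\otimes a_{3}=\mathbf{1}_{1}\otimes a\,\mathbf{1}_{2}$, which is also what pins down $F$. Taking $a=\xi\otimes\omega$ with $\xi,\omega\in\mathcal{E}_{n}$, I would first iterate the coproduct (\ref{eq:cop-1}) to get $a_{1}\otimes a_{2}\otimes a_{3}=\sum_{\xi_{a},\xi_{b}}(\xi\otimes\xi_{b})\boxtimes(\xi_{b}\otimes\xi_{a})\boxtimes(\xi_{a}\otimes\omega)$, the sums running over orthonormal essential paths of length $n$. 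Feeding the first leg through the ansatz (\ref{eq:antip}) and multiplying the first two legs with (\ref{eq:prod}) turns the left-hand side into $\sum_{\xi_{a},\xi_{b}}F(\xi,\xi_{b})\,P(\xi_{b}^{\star}\star\xi_{b}\otimes\xi^{\star}\star\xi_{a})\boxtimes(\xi_{a}\otimes\omega)$.

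For the right-hand side, $\mathbf{1}=\sum_{v,v'\in\mathcal{E}_{0}}v\otimes v'$ gives $\Delta(\mathbf{1})=\sum_{v,u,v'}(v\otimes u)\boxtimes(u\otimes v')$, and since $(\xi\otimes\omega)\cdot(u\otimes v')=P(\xi\star u\otimes\omega\star v')=\delta_{u,r(\xi)}\delta_{v',r(\omega)}\,\xi\otimes\omega$, the whole right-hand side collapses to $\sum_{v\in\mathcal{E}_{0}}(v\otimes r(\xi))\boxtimes(\xi\otimes\omega)$. Because the second legs $\{\xi_{a}\otimes\omega\}$ are linearly independent, the axiom is equivalent to the single identity
\[
\sum_{\xi_{b}}F(\xi,\xi_{b})\,P(\xi_{b}^{\star}\star\xi_{b}\otimes\xi^{\star}\star\xi_{a})=\delta_{\xi_{a},\xi}\sum_{v\in\mathcal{E}_{0}}v\otimes r(\xi),
\]
which I would establish next.

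The clean backbone uses Propositions \ref{forthback-1} and \ref{cbf}. Writing $F(\xi,\xi_{b})=\sqrt{\mu_{r(\xi)}/\mu_{s(\xi)}}\,\sqrt{\mu_{s(\xi_{b})}/\mu_{r(\xi_{b})}}$ and pulling the $\xi_{b}$-sum inside $P$ by linearity, the substitution $\eta=\xi_{b}^{\star}$ identifies $\sum_{\xi_{b}}\sqrt{\mu_{s(\xi_{b})}/\mu_{r(\xi_{b})}}\,\xi_{b}^{\star}\star\xi_{b}$ with the sum over essential $\eta$ of the forth-and-back paths in (\ref{eq:backforth}), i.e. with $\sum_{v_{0}}(\Pi_{n}^{(0)}\star\Pi_{n}^{(0)})c_{n-1}^{\dagger}\cdots c_{0}^{\dagger}(v_{0})$ by (\ref{eq:projbf}). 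On the pure maximal-creation vector $\sum_{v_{0}}c_{n-1}^{\dagger}\cdots c_{0}^{\dagger}(v_{0})$ the projector is immediate from (\ref{eq:proj}), in the form of the remark following Definition \ref{A-projector-}: each summand applies the annihilator string $c_{0}c_{1}\cdots c_{n-1}$ to $\xi^{\star}\star\xi_{a}$, and Proposition \ref{cbf} evaluates this to $\delta_{\xi_{a},\xi}\sqrt{\mu_{s(\xi)}/\mu_{r(\xi)}}\,s(\xi^{\star})$, so that
\[
P\Big(\sum_{v_{0}}c_{n-1}^{\dagger}\cdots c_{0}^{\dagger}(v_{0})\otimes\xi^{\star}\star\xi_{a}\Big)=\delta_{\xi_{a},\xi}\sqrt{\mu_{s(\xi)}/\mu_{r(\xi)}}\sum_{v_{0}}v_{0}\otimes r(\xi).
\]
The weight $\sqrt{\mu_{s(\xi)}/\mu_{r(\xi)}}$ cancels the prefactor $\sqrt{\mu_{r(\xi)}/\mu_{s(\xi)}}$ carried by $F$, leaving coefficient exactly $1$; this cancellation is precisely what forces $F(\xi,\omega)=\sqrt{\mu_{s(\omega)}\mu_{r(\xi)}/(\mu_{r(\omega)}\mu_{s(\xi)})}$.

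The main obstacle is the gap between the maximal-creation vector $\sum_{v_{0}}c_{n-1}^{\dagger}\cdots c_{0}^{\dagger}(v_{0})$ and the quantity actually produced by the $\xi_{b}$-sum, which by (\ref{eq:projbf}) is only its projection onto essential halves; equivalently, by (\ref{eq:backforth}), the difference is a combination of forth-and-back paths $\eta\star\eta^{\star}$ built on \emph{non-essential} $\eta$. Completing the proof therefore requires showing that these non-essential forth-and-back terms are annihilated by $P(\,\cdot\otimes\xi^{\star}\star\xi_{a})$, i.e. that $P(\eta\star\eta^{\star}\otimes\xi^{\star}\star\xi_{a})=0$ for non-essential $\eta$, in every graded sector and not merely the $End(\mathcal{E}_{0})$ one that the computation above controls. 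I expect this to be the delicate point: it should follow by decomposing $\eta\star\eta^{\star}$ as in (\ref{eq:decomp-2}) and invoking Proposition \ref{cbf} to see that the partial annihilator strings coming from its creation operators cannot fully contract the essential-in-both-halves path $\xi^{\star}\star\xi_{a}$, but this must be verified carefully while tracking every $\mu$-weight. Once this vanishing is in hand, the reduced identity — and hence the fourth axiom with the stated $F$ — follows.
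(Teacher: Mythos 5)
Your reduction of the fourth antipode axiom to the single identity $\sum_{\xi_b}F(\xi,\xi_b)\,P(\xi_b^{\star}\star\xi_b\otimes\xi^{\star}\star\xi_a)=\delta_{\xi_a\xi}\sum_{v\in\mathcal{E}_0}v\otimes r(\xi)$ is exactly the paper's eq.~(\ref{eq:ant}), and your backbone — factoring $F(\xi,\xi_b)=\sqrt{\mu_{r(\xi)}/\mu_{s(\xi)}}\sqrt{\mu_{s(\xi_b)}/\mu_{r(\xi_b)}}$, identifying the weighted sum $\sum_{\xi_b}\sqrt{\mu_{s(\xi_b)}/\mu_{r(\xi_b)}}\,\xi_b^{\star}\star\xi_b$ through Proposition \ref{forthback-1}, and evaluating the surviving contraction with Proposition \ref{cbf} so that $\sqrt{\mu_{s(\xi)}/\mu_{r(\xi)}}$ cancels $\alpha(\xi)$ — is also the paper's. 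But you stop one step short: you leave as an unverified claim that the non-essential forth-and-back terms, i.e.\ the difference between $\sum_{v_0}c_{n-1}^{\dagger}\cdots c_0^{\dagger}(v_0)$ and its $(\Pi_n^{(0)}\star\Pi_n^{(0)})$-projection, are killed by $P(\,\cdot\,\otimes\xi^{\star}\star\xi_a)$, and you propose to close this by decomposing each $\eta\star\eta^{\star}$ via (\ref{eq:decomp-2}). That is a genuine gap in your writeup, and the remedy you sketch is both heavier than necessary and stronger than what is needed: nothing requires the vanishing term by term in $\eta$ (and there is no reason to expect it individually); only the aggregate difference matters.

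The paper disposes of this point in one line, and you already have the ingredients. In the matrix-element form of $P$ (the remark after Definition \ref{A-projector-}, which transfers the first slot's creation string onto a dual essential element), the first tensor factor enters only through inner products of the form $((\Pi_n^{(0)}\star\Pi_n^{(0)})c_{n-1}^{\dagger}\cdots c_0^{\dagger}\rho,\;\xi^{\star}\star\xi_a)$ with $\rho\in\mathcal{E}$. Since $\Pi_n^{(0)}\star\Pi_n^{(0)}$ is self-adjoint and both halves $\xi^{\star}$ and $\xi_a$ are already essential, $(\Pi_n^{(0)}\star\Pi_n^{(0)})(\xi^{\star}\star\xi_a)=\xi^{\star}\star\xi_a$, so the projector moves across the inner product and is absorbed: $((\Pi_n^{(0)}\star\Pi_n^{(0)})c_{n-1}^{\dagger}\cdots c_0^{\dagger}\rho,\,\xi^{\star}\star\xi_a)=(\rho,\,c_0c_1\cdots c_{n-1}\,\xi^{\star}\star\xi_a)$. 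Hence the projected and unprojected creation vectors give identical results inside $P(\,\cdot\,\otimes\xi^{\star}\star\xi_a)$ — which is precisely your desired vanishing statement, obtained wholesale rather than by tracking $\mu$-weights through the decomposition of every non-essential $\eta\star\eta^{\star}$. With this hermiticity observation inserted, Proposition \ref{cbf} forces the annihilator string to be $c_0c_1\cdots c_{n-1}$, produces $\delta_{\xi_a\xi}\sqrt{\mu_{s(\xi)}/\mu_{r(\xi)}}\,r(\xi)$, and the cancellation fixing $F(\xi,\omega)=\sqrt{\mu_{s(\omega)}\mu_{r(\xi)}/(\mu_{r(\omega)}\mu_{s(\xi)})}$ goes through exactly as you wrote it.
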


\begin{proof}Replacing the ansatz (\ref{eq:antip}) in the last axiom
in (\ref{eq:axant}), leads to,\begin{equation}
\sum_{\xi_{c},\xi_{d}\in\mathcal{E}}\; F(\xi,\xi_{c})(\xi_{c}^{\star}\otimes\xi^{\star})\cdot(\xi_{c}\otimes\xi_{d})\boxtimes\xi_{d}\otimes\omega=\sum_{v,u,v'\in\mathcal{E}_{0}}\; v\otimes u\boxtimes(\xi\otimes\omega)\cdot(u\otimes v')\label{eq:axant1}\end{equation}
employing the definition of the product and the fact that $(\xi\otimes\omega)\cdot(u\otimes v')=\delta_{r(\xi)u}\delta_{r(\omega)v'}(\xi\otimes\omega)$
shows that (\ref{eq:axant1}) is equivalent to,\begin{equation}
\sum_{\xi_{c}\in\mathcal{E}}\; F(\xi,\xi_{c})P(\xi_{c}^{\star}\star\xi_{c}\otimes\xi^{\star}\star\xi_{d})=\delta_{\xi_{d}\xi}\sum_{v\in\mathcal{E}_{0}}\; v\otimes r(\xi)\label{eq:ant}\end{equation}
Choosing the factor $F(\xi,\xi_{c})$ to be of the form,\[
F(\xi,\xi_{c})=\alpha(\xi)\sqrt{\frac{\mu_{v_{0}^{\xi_{c}}}}{\mu_{v_{n}^{\xi_{c}}}}}\]
the l.h.s. of this last equation is given by,\begin{eqnarray*}
\sum_{\xi_{c}\in\mathcal{E}}\; F(\xi,\xi_{c})P(\xi_{c}^{\star}\star\xi_{c}\otimes\xi^{\star}\star\xi_{d}) & = & \alpha(\xi)\sum_{v(=r(\xi_{c}))}P((\Pi_{n}^{(0)}\star\Pi_{n}^{(0)})c_{n-1}^{\dagger}c_{n-2}^{\dagger}\cdots c_{0}^{\dagger}(v)\otimes\xi^{\star}\star\xi_{d})\\
 & = & \alpha(\xi)\sum_{v\in\mathcal{E}_{0},\rho\in\mathcal{E}}v\otimes\rho((\Pi_{n}^{(0)}\star\Pi_{n}^{(0)})c_{n-1}^{\dagger}c_{n-2}^{\dagger}\cdots c_{0}^{\dagger}\rho,\xi^{\star}\star\xi_{d})\\
 & = & \alpha(\xi)\sum_{v\in\mathcal{E}_{0},\rho\in\mathcal{E}}v\otimes\rho(\rho,c_{0}\cdots c_{n-2}c_{n-1}(\Pi_{n}^{(0)}\star\Pi_{n}^{(0)})\xi^{\star}\star\xi_{d})\\
 & = & \alpha(\xi)\sum_{v\in\mathcal{E}_{0},\rho\in\mathcal{E}}v\otimes\rho(\rho,r(\xi))\delta_{\xi\xi_{d}}\sqrt{\frac{\mu_{s(\xi)}}{\mu_{r(\xi)}}}\\
 & = & \alpha(\xi)\sqrt{\frac{\mu_{s(\xi)}}{\mu_{r(\xi)}}}\delta_{\xi\xi_{d}}\;\sum_{v\in\mathcal{E}_{0}}\, v\otimes r(\xi)\end{eqnarray*}
where in the first equality we have employed (\ref{eq:projbf}) of
proposition \ref{forthback-1}, the second equality involves the definition
of the projector $P$, the hermiticity of the projector $(\Pi_{n}^{(0)}\star\Pi_{n}^{(0)})$
was employed in writing the third equality, the fact that $\xi^{\star}$
and $\xi_{d}$ are already essential and proposition \ref{cbf} were
employed in the fourth equality. Thus choosing,\[
\alpha(\xi)=\sqrt{\frac{\mu_{r(\xi)}}{\mu_{s(\xi)}}}\Rightarrow F(\xi,\xi_{c})=\sqrt{\frac{\mu_{r(\xi)}\mu_{s(\xi_{c})}}{\mu_{s(\xi)}\mu_{r(\xi_{c})}}}\]
 leads to the result.

\end{proof}

\end{document}